\renewcommand{\theequation}{\thesection.\arabic{equation}}
\newtheorem{theorem}{Theorem}
\newtheorem{lemma}{Lemma}
\newtheorem{proposition}{Proposition}
\newtheorem{corollary}{Corollary}
\newtheorem*{remark*}{Remark}
\newtheorem*{remarks}{Remarks}
\newtheorem{definition}{Definition}
\newtheorem*{assumptions}{Assumptions}
\newcommand{\eqnsection}{
\renewcommand{\theequation}{\thesection.\arabic{equation}}
    \makeatletter
    \csname  @addtoreset\endcsname{equation}{section}
    \makeatother}
\def\z{{\mathbb Z}}
\def\ee{\mathrm{e}}
\def\d{\, \mathrm{d}}
\def\eps{\varepsilon}
\def\R{{\mathbb R}}
\def\Z{{\mathbb Z}}
\def\C{{\mathbb C}}
\def\N{{\mathbb N}}
\author[N. Enriquez]{Nathana\"el ENRIQUEZ}
\address{Laboratoire Modal'X, Universit\'e Paris 10, 200
Avenue de la R\'epublique, 92000 Nanterre, France}
\address{Laboratoire de Probabilit\'es et Mod\`eles Al\'eatoires, CNRS
UMR 7599,
Universit\'e Paris 6, 4
place Jussieu, 75252 Paris Cedex 05, France}
\email{nenriquez@u-paris10.fr}
\author[C. Sabot]{Christophe SABOT}
\address{Institut Camille Jordan, CNRS UMR 5208, Universit\'e de
Lyon, Universit\'e Lyon 1, 43, Boulevard du 11 novembre 1918,
69622 Villeurbanne Cedex} \email{sabot@math.univ-lyon1.fr}
\author[L. Tournier]{Laurent TOURNIER}
\address{Institut Camille Jordan, CNRS UMR 5208, Universit\'e de
Lyon, Universit\'e Lyon 1, 43, Boulevard du 11 novembre 1918,
69622 Villeurbanne Cedex} \email{tournier@math.univ-lyon1.fr}
\author[O. Zindy]{Olivier ZINDY}
\address{Laboratoire de Probabilit\'es et Mod\`eles Al\'eatoires, CNRS UMR 7599, Universit\'e Paris 6, 4 place Jussieu, 75252 Paris Cedex 05, France}
\email{olivier.zindy@upmc.fr}
\keywords{Random walk in random environment, stable laws,
fluctuation theory of random walks, Beta distributions}
\subjclass[2010]{primary 60K37, 60F05, 82B41; secondary 60E07, 60E10}
\thanks{This research was supported by the french ANR project MEMEMO}
\title[Stable fluctuations for ballistic RWRE]{Stable fluctuations for ballistic random walks in random environment on $\z$}
\newcommand{\indic}[1]{{\bf 1}_{\{#1\}}}
\def\ind{{\bf 1}}
\newcommand{\tia}{\tau_{\rm IA}}
\def\taut{{\widetilde{\tau}}}
\def\Var{\textit{Var}}
\def\bE{\mathbb{E}}
\def\bP{\mathbb{P}}
\def\bVar{\mathbb{V}{\rm ar}}
\def\bPp{\bP^{\geq 0}}
\def\Pp{P^{\geq 0}}
\def\bEp{\bE^{\geq 0}}
\def\Ep{E^{\geq 0}}
\def\bVarp{\bVar^{\geq 0}}
\def\Varp{\Var^{\geq 0}}
\def\eqd{\begin{eqnarray*}}
\def\eqf{\end{eqnarray*}}
\def\hseq{\hspace{-.5cm}} % pour la 1re ligne des align (larges) après " & " 
\def\nonoverlap{NO(n)}
\def\overlap{NO(n)^c}
\def\nonoverlapc{\widehat{NO}(n)}
\def\beq{\begin{equation}} % begin equation
\def\eeq{\end{equation}} % end equation
\newcommand{\tautx}[1]{\taut^{(#1)}}
\newcommand{\limites}[2]{\overset{#1}{\underset{#2}{\longrightarrow}}}
\newcommand{\defeq}{\mathrel{\mathop:}=}
\newcommand{\defqe}{=\mathrel{\mathop:}}
\begin{document}

\maketitle

\bigskip

{\footnotesize \noindent{\slshape\bfseries Abstract.} We consider transient random walks in random environment on $\Z$ in the positive speed (ballistic) and critical zero speed regimes. A classical result of Kesten, Kozlov and Spitzer proves that the hitting time of level $n$, after proper centering and normalization, converges to a completely asymmetric stable distribution, but does not describe its scale parameter. Following~\cite{limitlaws}, where the (non-critical) zero speed case was dealt with, we give a new proof of this result in the subdiffusive case that provides a complete description of the limit law. Like in \cite{limitlaws}, the case of Dirichlet environment turns out to be remarkably explicit. }

%\bigskip
\bigskip

\section{Introduction}
   \label{s:intro}

Random walks in a one-dimensional random environment were first introduced in the late sixties as a toy model for DNA replication. The recent development of micromanipulation technics such as DNA unzipping has raised a renewed interest in this model in genetics and biophysics, cf.\ for instance \cite{cocco} where it is involved in a DNA sequencing procedure. Its mathematical study was initiated by Solomon's 1975 article \cite{solomon}, characterizing the transient and recurrent regimes and proving a strong law of large numbers. A salient feature emerging from this work was the existence of an intermediary regime where the walk is transient with a zero asymptotic speed, in contrast with the case of simple random walks. Shortly afterward, Kesten, Kozlov and Spitzer \cite{kks} precised this result in giving limit laws in the transient regime. When suitably normalized, the (properly centered) hitting time of site $n$ by the random walk was proved to converge toward a stable law as $n$ tends to infinity, which implies a limit law for the random walk itself. In particular, this entailed that the ballistic case (i.e.\ with positive speed) further decomposes into a diffusive and a subdiffusive regimes. 

The aim of this article is to fully characterize the limit law in the subdiffusive (non-Gaussian) regime. Our approach is based on the one used in the similar study of the zero speed regime \cite{limitlaws} by three of the authors. The proof of \cite{kks} relied on the use of an embedded branching process in random environment (with immigration), which gives little insight into the localization of the random walk and no explicit parameters for the limit. Rather following Sinai's study \cite{sinai} of the recurrent case and physicists' heuristics developed since then (cf.\ for instance \cite{bouchaud}), we proceed to an analysis of the potential associated to the environment as a way to locate the ``deep valleys'' that are likely to slow down the walk the most. We thus prove that the fluctuations of the hitting time of $n$ with respect to its expectation mainly come from the time spent at crossing a very small number of deep potential wells. Since these are well apart, this translates the situation to the study of an almost-i.i.d.\ sequence of exit times out of ``deep valleys''. The distribution of these exit times involves the expectation of some functional of a meander associated to the potential, which was shown in \cite{renewal} to relate to Kesten's renewal series, making it possible to get explicit constants in the limit. The case of Beta distributions turns out to be fully explicit as a consequence of a result by Chamayou and Letac \cite{chamayou-letac}. The proof also covers the non-ballistic regime, including the critical zero-speed case, which was not covered in \cite{limitlaws}. 

Let us mention two other works relative to this setting. Mayer-Wolf, Roitershtein and Zeitouni~\cite{mayerwolf-roitershtein-zeitouni} generalized the limit laws of \cite{kks} from i.i.d.~to Markovian environment, still keeping with the branching process viewpoint. And Peterson \cite{peterson} (following \cite{peterson-zeitouni}), in the classical i.i.d.~setting and using potential technics, proved that no quenched limit law (i.e.~inside a fixed generic environment) exists in the ballistic subdiffusive regime. 

The paper is organized as follows. Section \ref{s:hyp+thm} states the results. The notions of excursions and deep valleys are introduced in Section \ref{sec:valleys}, which will enable us to give in Subsection \ref{subsec:sketch} the sketch and organization of the proof that occupies the rest of the paper. 

\section{Notations and main results}
   \label{s:hyp+thm}

Let $\omega\defeq (\omega_i, \, i \in \Z)$ be a family of i.i.d. random
variables taking values in $(0,1)$ defined on $\Omega,$ which stands
for the random environment. Denote by $P$ the distribution of
$\omega$ and by $E$ the corresponding expectation. Conditioning on
$\omega$ (i.e.\ choosing an environment), we define the random walk
in random environment $X\defeq (X_n, \, n \ge 0)$ starting from $x\in\Z$ as a nearest-neighbour
random walk on $\Z$ with transition probabilities given by $\omega$: if we denote by $P_{x,\omega}$ the law of the Markov chain $(X_n, \, n \ge 0)$ defined by $P_{x,\omega} \left( X_{0} = x\right) =1$ and
 \begin{align*}
P_{x,\omega} \left( X_{n+1} = z \, | \, X_n =y\right) \defeq \left\{ 
\begin{array}{lll}
		\omega_y, & {\rm if}\ z=y+1,\\
		1-\omega_y, & {\rm if}\ z=y-1,\\
		0, & {\rm otherwise,}
\end{array}
\right.
 \end{align*}
then the joint law of $(\omega,X)$ is $\bP_x(\d \omega,\d X)\defeq P_{x,\omega}(\d X)P(\d \omega)$.
For convenience, we let $\bP\defeq \bP_0$. We refer to \cite{zeitouni} for an overview of results on random walks in random environment. 
An important role is played by the sequence of variables 
 \begin{align}
 \label{defrho}
\rho_i\defeq  \frac{1-\omega_i}{\omega_i}, \qquad i \in \Z.
 \end{align}
We will make the following assumptions in the rest of this paper. 
\begin{assumptions}\leavevmode
\begin{itemize}
  \item[({\it a})] there exists $0<\kappa<2$ for which $E \left[  \rho_0^{\kappa} \right]=1$ and $E \left[  \rho_0^{\kappa} \log^+ \rho_0 \right]<\infty$;
  \item [({\it b})] the  distribution  of $\log \rho_0$  is non-lattice.
\end{itemize}
\end{assumptions}

We now introduce the hitting time $\tau(x)$ of site $x$ for the
random walk $(X_n, \, n \ge 0),$
\begin{equation*}\label{hittimerw}
    \tau(x)\defeq  \inf \{ n \ge 1: \, X_n=x \}, \qquad x \in \Z.
\end{equation*}

For $\alpha \in (1,2)$, let $\mathcal{S}_{\alpha}^{ca}$ be
the completely asymmetric stable zero mean random variable of index $\alpha$
with characteristic function 
\begin{equation}\label{eq:defStable}
E[\ee^{it  \mathcal{S}_{\alpha}^{ca}}]=\exp((-it)^\alpha)= \exp\left(|t|^\alpha\cos\frac{\pi\alpha}{2}\left(1-i\operatorname{sgn}(t)\tan\frac{\pi\alpha}{2}\right)\right),
\end{equation}
where we use the principal value of the logarithm to define $(-it)^\alpha(=\ee^{\alpha\log(-it)})$ for real $t$, and $\operatorname{sgn}(t)\defeq\ind_{(0,+\infty)}(t)-\ind_{(-\infty,0)}(t)$. Note that $\cos\frac{\pi\alpha}{2}<0$. 

For $\alpha=1$, let $\mathcal{S}_1^{ca}$ be the completely asymmetric stable random variable of index $1$ with characteristic function
\begin{equation}\label{eq:defStable1}
E[\ee^{i t\mathcal{S}_1^{ca}}]=\exp(-\frac{\pi}{2}|t|-it\log|t|)=\exp\big(-\frac{\pi}{2}|t|(1+i\frac{2}{\pi}\operatorname{sgn}(t)\log|t|)\big). 
\end{equation}

Moreover, let us introduce the constant $C_K$ describing the tail of Kesten's renewal series $R\defeq \sum_{k\geq0}\rho_0\cdots\rho_k$, see \cite{kesten73}:
\begin{align*}
%\label{tailkesten}
 P(R>x)\sim C_K  x^{-\kappa}, \qquad x \to
\infty.
\end{align*}
Note that several probabilistic representations are available to compute $C_K$ numerically, which are equally efficient. The first one was obtained by Goldie \cite{goldie}, a second  was conjectured by Siegmund \cite{siegmund}, and a third one was obtained in \cite{renewal}. 

The main result of the paper can be stated as follows. The symbol ``$\stackrel{\mathrm{(law)}}{\longrightarrow}$'' denotes the convergence in distribution.
\medskip
 \begin{theorem}
\label{T:MAIN} Under assumptions ({\it a}) and ({\it b}) we have, under $\bP$, when $n$ goes to infinity,
\begin{itemize}
	\item if $1<\kappa<2$, letting $v\defeq\frac{1-E[\rho_0]}{1+E[\rho_0]}$, 
 \begin{align}
  \frac{\tau(n)-nv^{-1}}{n^{1/\kappa}}
  	&\stackrel{\mathrm{(law)}}{\longrightarrow} 2\left(-\frac{\pi\kappa^2}{\sin(\pi\kappa)}C_K^2E[\rho_0^\kappa\log\rho_0]\right)^{1/\kappa}\mathcal{S}_{\kappa}^{ca}  \label{eqn:thm_tau} \\
\intertext{and}
  \frac{X_n-nv}{n^{1/\kappa}}
  	&\stackrel{\mathrm{(law)}}{\longrightarrow}  - 2\left(-\frac{\pi\kappa^2}{\sin(\pi\kappa)}C_K^2E[\rho_0^\kappa\log\rho_0]\right)^{1/\kappa} v^{1+\frac{1}{\kappa}}\mathcal{S}_{\kappa}^{ca};\label{eqn:thm_x}
\end{align}
	\item if $\kappa=1$, for some deterministic sequences $(u_n)_n,(v_n)_n$ converging to~1,
 \begin{align}
  \frac{\tau(n)-u_n\frac{2}{E[\rho_0\log\rho_0]} n\log n}{n}
  	&\stackrel{\mathrm{(law)}}{\longrightarrow} \frac{2}{E[\rho_0\log\rho_0]}\mathcal{S}_{1}^{ca}  \label{eqn:thm_tau1} \\
\intertext{and}
  \frac{X_n-v_n\frac{E[\rho_0\log\rho_0]}{2}\frac{n}{\log n}}{n/(\log n)^2}
    &\stackrel{\mathrm{(law)}}{\longrightarrow}  \frac{E[\rho_0\log\rho_0]}{2}\mathcal{S}_{1}^{ca}.\label{eqn:thm_x1}
\end{align}
\end{itemize}
In particular, for $\kappa=1$, the following limits in probability hold:
\begin{align}
\frac{\tau(n)}{n\log n}
	 \limites{(p)}{}\frac{2}{E[\rho_0\log\rho_0]}
\qquad\text{and}\qquad
\frac{X_n}{n/\log n}
	 \limites{(p)}{}\frac{E[\rho_0\log\rho_0]}{2}.
\end{align}
\end{theorem}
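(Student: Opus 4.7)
The plan is to adapt the potential/deep-valleys strategy of~\cite{limitlaws} from the zero-speed case to the (sub)ballistic regimes $\kappa\in[1,2)$. First, I would introduce the potential $V(x)\defeq\sum_{i=1}^x\log\rho_i$ (with the usual convention for $x\le 0$) and decompose $\Z$ into the successive excursions of $V$ above its past minima. Each such excursion carries a ``height'' $H=\max V-\min V$, and under assumption~(\textit{a}) the heights have a polynomial tail of index~$\kappa$. The idea is that $\tau(n)$ is essentially governed by the time spent crossing the few excursions whose height is atypically large: these are the \emph{deep valleys} introduced in Section~\ref{sec:valleys}.

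The main structural step would be a decomposition
\[
\tau(n)\;=\;\sum_{j\in D(n)}\tauh_j\;+\;\text{(error terms)},
\]
where $D(n)$ indexes the deep valleys in $[0,n]$ (say those whose height exceeds a well-chosen threshold $h_n$) and $\tauh_j$ is a regenerated crossing time of the $j$-th deep valley (independent of the rest of the environment after a suitable truncation). I would then prove that (i) the error terms, coming from shallow valleys and from boundary effects, concentrate around $nv^{-1}$ (resp.\ around the $\kappa=1$ deterministic centering $u_n\frac{2}{E[\rho_0\log\rho_0]}n\log n$) at a rate $o(n^{1/\kappa})$ (resp.\ $o(n)$); and (ii) the deep-valley crossing times $(\tauh_j)_j$ are, after a small coupling argument, i.i.d.\ copies of a variable $\taut$ whose tail can be made explicit. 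The key input here is the computation of $\bP(\taut>t)$ via a quenched estimate $E_\omega[\taut]\asymp\sum_i\ee^{V(i)-\min V}$ combined with the tail of Kesten's renewal series $R=\sum_{k\ge0}\rho_0\cdots\rho_k$: the meander representation of~\cite{renewal} yields
\[
\bP(\taut>t)\sim c\,C_K^2\,t^{-\kappa}\qquad(t\to\infty)
\]
with $c$ computable in terms of $E[\rho_0^\kappa\log\rho_0]$.

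Once the tail of $\taut$ is in hand, I would apply the classical stable CLT for i.i.d.\ sums with regularly varying tails of index $\kappa\in(1,2)$. Because $\taut$ is nonnegative, the limit is completely asymmetric; the precise scale constant $2\bigl(-\frac{\pi\kappa^2}{\sin(\pi\kappa)}C_K^2E[\rho_0^\kappa\log\rho_0]\bigr)^{1/\kappa}$ comes out of the standard normalization formula
\[
n^{1/\kappa}\,\mathcal{S}_\kappa^{ca}\;\equivalent{\mathrm{law}}\;\frac{\sum_{j\le n}(\taut_j-\bE\taut)}{(-\Gamma(1-\kappa)\cos(\pi\kappa/2))^{-1/\kappa}\,(\text{tail constant})^{1/\kappa}},
\]
using the reflection formula $\Gamma(1-\kappa)\Gamma(\kappa)=\pi/\sin(\pi\kappa)$. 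For $\kappa=1$ the same scheme applies, but $\bE\taut=\infty$ and one must use the centering by a truncated mean, which is where the $u_n$, $v_n$ sequences converging to~$1$ appear; the characteristic function in~\eqref{eq:defStable1} then emerges from the standard $\kappa=1$ stable limit theorem.

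The passage from $\tau(n)$ to $X_n$ is the routine duality $\{\tau(n)\le t\}=\{\max_{k\le t}X_k\ge n\}$ combined with monotonicity of $\tau$ and the fact that $|X_n-\max_{k\le n}X_k|$ is tight. For $1<\kappa<2$, a first-order Taylor expansion of $\tau^{-1}$ around $nv^{-1}$ converts the fluctuations of $\tau$ of order $n^{1/\kappa}$ into fluctuations of $X$ of the same order with the extra factor $-v^{1+1/\kappa}$; for $\kappa=1$ one must invert a logarithmically slowly varying centering, which is the source of the $(\log n)^2$ denominator and of the law-of-large-numbers statements at the end of the theorem.

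The hardest step, and the one that really justifies the factor $C_K^2$ in the scale, is establishing the exact tail equivalence for $\taut$: one needs the joint tail behavior of the two halves of the potential meander (one descending, one ascending) to decouple into a product of two independent Kesten tails, and to control the polylogarithmic terms for $\kappa=1$. The independence of successive deep valleys, and the negligibility of shallow-valley contributions, are comparatively softer but require careful choice of the height threshold $h_n$ and of the renewal times separating consecutive deep valleys, as already done in~\cite{limitlaws} in the $\kappa<1$ range.
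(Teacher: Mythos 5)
Your proposal follows essentially the same route as the paper: decompose $\tau(n)$ into deep-valley crossing times plus shallow-excursion and boundary errors, prove the errors concentrate at the deterministic centering at a rate $o(n^{1/\kappa})$, reduce the deep-valley times to an i.i.d.\ family via non-overlap and a coupling, establish the tail $\bPp(\tau(e_1)>t)\sim C_T t^{-\kappa}$ with $C_T$ proportional to $C_K^2 E[\rho_0^\kappa\log\rho_0]$ using the meander/renewal representation, and finish with the classical stable limit theorem plus the inversion to $X_n$. The one point your sketch underweights is that the exact constant $2^\kappa\Gamma(\kappa+1)$ in the tail comes from showing the quenched crossing time is approximately exponential with mean $2Z=2M_1M_2\ee^H$ (via the two $h$-processes for failure/success attempts) and then integrating that exponential against the $\kappa$-tail of $Z$ — not merely from a quenched mean estimate — but this is a matter of detail, not a different method.
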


\begin{remarks}\leavevmode
\begin{itemize}
	\item The proof of the theorem will actually give an  expression for the sequence $(u_n)_n$. 
	\item The case $0<\kappa<1$, already settled in~\cite{limitlaws}, also follows from (a subset of) the proof. 
\end{itemize}
\end{remarks}
\medskip

This theorem takes a remarkably explicit form in the case of Dirichlet environment, i.e.\ when the law of $\omega_0$ is  $\mathrm{Beta}(\alpha,\beta)\defeq\frac{1}{B(\alpha,\beta)}x^{\alpha-1}(1-x)^{\beta-1}{\bf 1}_{(0,1)}(x) \d x,$ with $\alpha, \beta >0$ and $B(\alpha,\beta)\defeq \int_{0}^1 x^{\alpha-1}(1-x)^{\beta-1} \d x=\frac{\Gamma(\alpha)\Gamma(\beta)}{\Gamma(\alpha+\beta)}$. An easy computation leads to $\kappa=\alpha-\beta.$
Thanks to a very nice result  of Chamayou and Letac \cite{chamayou-letac} giving the explicit value of $C_K$ in this case, we obtain the following corollary.
\medskip

 \begin{corollary}
\label{c:main} In the case where $\omega_0$ has a distribution $\mathrm{Beta}(\alpha,\beta),$ with $1\leq \alpha-\beta <2,$ Theorem \ref{T:MAIN} applies with $\kappa=\alpha-\beta.$ Then we have, when $n$ goes to infinity, if $1<\alpha-\beta<2$,
 \begin{align*}
 \frac{\tau(n)-\frac{\alpha+\beta-1}{\alpha-\beta-1}n}{n^{\frac{1}{\alpha-\beta}}}
	 &\stackrel{\mathrm{law}}{\longrightarrow} 2\left(-\frac{\pi}{\sin(\pi(\alpha-\beta))}\frac{\Psi(\alpha)-\Psi(\beta)}{B(\alpha,\beta)^2}\right)^{\frac{1}{\alpha-\beta}} \mathcal{S}_{\alpha-\beta}^{ca},\\
\intertext{and}
  \frac{X_n-\frac{\alpha-\beta-1}{\alpha+\beta-1}n}{n^{\frac{1}{\alpha-\beta}}}
	 &\stackrel{\mathrm{law}}{\longrightarrow} -2\left(-\frac{\pi}{\sin(\pi(\alpha-\beta))}\frac{\Psi(\alpha)-\Psi(\beta)}{B(\alpha,\beta)^2}\right)^{\frac{1}{\alpha-\beta}} \left(\tfrac{\alpha-\beta-1}{\alpha+\beta-1}\right)^{1+\frac{1}{\alpha-\beta}}\mathcal{S}_{\alpha-\beta}^{ca},
\end{align*}
where $\Psi$ denotes the classical digamma function,
$\Psi(z)\defeq (\log \Gamma)'(z)=\frac{\Gamma'(z)}{\Gamma(z)}.$ Furthermore, if $\alpha-\beta=1$, then we have 
\begin{equation*}
E[\rho_0\log\rho_0]=\frac{B(\beta,\beta)}{2\beta}.%=\frac{\Gamma(\beta)^2}{\Gamma(2\beta+1)}. 
\end{equation*}
\end{corollary}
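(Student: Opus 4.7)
The plan is to specialize Theorem~\ref{T:MAIN} to the Beta setting by computing in closed form each of the four quantities appearing in it: $\kappa$, $v$, $E[\rho_0^\kappa\log\rho_0]$, and the Kesten constant $C_K$. The first three reduce to manipulations of the Mellin transform of $\rho_0$; the fourth is supplied by the Chamayou--Letac identity.

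Since $\omega_0\sim\mathrm{Beta}(\alpha,\beta)$, the substitution $y=(1-x)/x$ in the defining Beta integral shows that, for $s\in(-\beta,\alpha)$,
\begin{equation*}
E[\rho_0^s]=\frac{B(\alpha-s,\beta+s)}{B(\alpha,\beta)}=\frac{\Gamma(\alpha-s)\Gamma(\beta+s)}{\Gamma(\alpha)\Gamma(\beta)}.
\end{equation*}
Log-convexity of $\Gamma$ makes $s\mapsto\Gamma(\alpha-s)\Gamma(\beta+s)$ log-convex, hence equal to $\Gamma(\alpha)\Gamma(\beta)$ at no more than two points; since $s=0$ and $s=\alpha-\beta$ are both obvious solutions, we conclude $\kappa=\alpha-\beta$. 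Evaluating at $s=1$ gives $E[\rho_0]=\beta/(\alpha-1)$, whence $v=(1-E[\rho_0])/(1+E[\rho_0])=(\alpha-\beta-1)/(\alpha+\beta-1)$, the stated centering. Differentiating the Mellin formula in $s$ at $s=\kappa$ (where the cancellations $\alpha-s=\beta$ and $\beta+s=\alpha$ occur) and using $E[\rho_0^\kappa]=1$ yields $E[\rho_0^\kappa\log\rho_0]=\Psi(\alpha)-\Psi(\beta)$.

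For $C_K$, Kesten's renewal series $R=\sum_{k\ge0}\rho_0\cdots\rho_k$ satisfies the fixed-point equation $R\stackrel{d}{=}\rho_0(1+R')$ with $R'$ an independent copy; in the Dirichlet case, \cite{chamayou-letac} identifies its law as an explicit Beta-type distribution, from which $C_K$ is read off in closed form as a function of $\alpha,\beta$. Substituting this value, together with the previous ones, into the scale factors of \eqref{eqn:thm_tau} and \eqref{eqn:thm_x} and simplifying---the combination $\kappa^2 C_K^2 E[\rho_0^\kappa\log\rho_0]$ collapsing to $(\Psi(\alpha)-\Psi(\beta))/B(\alpha,\beta)^2$---gives the two stable limits of the corollary, the factor $v^{1+1/\kappa}$ in the $X_n$ statement being inherited directly from \eqref{eqn:thm_x}. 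For the case $\alpha-\beta=1$, which only requires $E[\rho_0\log\rho_0]$, one uses the same derivative identity at $\kappa=1$ and rewrites the result as a ratio of Beta integrals via $B(\beta,\beta+1)=\tfrac12 B(\beta,\beta)$ to reach the stated compact form. The only step that is not mechanical is the algebraic matching at the Chamayou--Letac stage: one must check that the explicit $C_K$ coming from the renewal-series density combines with the digamma difference to produce the clean $B(\alpha,\beta)^2$ denominator, after which the whole corollary follows by direct substitution into the general theorem.
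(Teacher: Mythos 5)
Your route is the same as the paper's: the corollary is meant to follow by specializing Theorem \ref{T:MAIN}, i.e.\ by computing $\kappa$, $v$ and $E[\rho_0^\kappa\log\rho_0]$ explicitly and importing the explicit $C_K$ from Chamayou--Letac. Your Mellin computations are correct and complete: $E[\rho_0^s]=B(\alpha-s,\beta+s)/B(\alpha,\beta)$ for $-\beta<s<\alpha$, hence $\kappa=\alpha-\beta$ by strict convexity, $E[\rho_0]=\beta/(\alpha-1)$, $v=(\alpha-\beta-1)/(\alpha+\beta-1)$, and $E[\rho_0^\kappa\log\rho_0]=\Psi(\alpha)-\Psi(\beta)$.

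The problem is that the only step carrying the content of the corollary --- the explicit value of $C_K$ and its algebraic combination with the other constants --- is precisely the step you do not perform: you merely assert that ``one must check'' that it produces the $B(\alpha,\beta)^2$ denominator. Carrying it out: the Chamayou--Letac fixed point of $R\stackrel{d}{=}\rho_0(1+R)$ is the law with $1/(1+R)\sim\mathrm{Beta}(\alpha-\beta,\beta)$, i.e.\ $R$ has density $B(\beta,\alpha-\beta)^{-1}x^{\beta-1}(1+x)^{-\alpha}$ (one checks stationarity by beta--gamma algebra, writing $\rho_0=G_\beta/G_\alpha$), whence $P(R>x)\sim\frac{1}{\kappa B(\beta,\alpha-\beta)}\,x^{-\kappa}$ and $C_K=\frac{1}{\kappa B(\beta,\alpha-\beta)}$; at $\kappa=1$ this is consistent with Goldie's identity $C_K=1/E[\rho_0\log\rho_0]=\beta$ quoted in Section \ref{sec:preuve_thm}. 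With this value, $\kappa^2C_K^2E[\rho_0^\kappa\log\rho_0]=(\Psi(\alpha)-\Psi(\beta))/B(\beta,\alpha-\beta)^2$, which is \emph{not} the quantity $(\Psi(\alpha)-\Psi(\beta))/B(\alpha,\beta)^2$ that you claim appears ``mechanically'': $B(\beta,\alpha-\beta)=B(\alpha,\beta)$ would require $\Gamma(\alpha)^2=\Gamma(\alpha+\beta)\Gamma(\alpha-\beta)$, false in general. Likewise your last step is not a derivation: for $\alpha-\beta=1$ the derivative identity gives $E[\rho_0\log\rho_0]=\Psi(\beta+1)-\Psi(\beta)=1/\beta$, and no rewriting via $B(\beta,\beta+1)=\tfrac12B(\beta,\beta)$ turns $1/\beta$ into $B(\beta,\beta)/(2\beta)$ (at $\beta=1$ these are $1$ and $1/2$; a direct integration with $\omega_0\sim\mathrm{Beta}(2,1)$ confirms the value $1$). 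So the decisive matching between the Chamayou--Letac constant and the displayed expressions is missing from your proposal, and it does not go through in the way you describe; whatever the resolution of the resulting discrepancy with the printed constants, a proof must actually state the fixed-point law, extract $C_K$ from its tail, and confront the product $\kappa^2C_K^2E[\rho_0^\kappa\log\rho_0]$ with the formula in the corollary, which your write-up never does.
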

% $E[\rho\log\rho]=(\Psi(\alpha)-\Psi(\beta))B(\alpha,\beta)$
\bigskip

In the following, the constant $C$ stands for a positive constant large enough, whose value can change from line to line. We henceforth assume that hypotheses ({\it a}) and ({\it b}) hold; in particular, wherever no further restriction is mentioned, we have $0<\kappa<2$. 

\section{Notion of valley -- Proof sketch} \label{sec:valleys} 
Following Sinai \cite{sinai} (in the recurrent case), and more recently the study of the case $0<\kappa<1$ in \cite{limitlaws}, we define notions of potential and valleys that enable to visualize where the random walk spends most of its time. 

\subsection{The potential}
The potential, denoted by $V= (V(x), \; x\in \Z)$, is a function of the environment
$\omega$ defined by $V(0)=0$ and $\rho_x=\ee^{V(x)-V(x-1)}$ for every $x\in\Z$, i.e. 
\[
V(x) \defeq \left\{\begin{array}{lll}
	\sum_{i=1}^x \log \rho_i & {\rm if} \ x \ge 1,\\
	0 &  {\rm if} \ x=0,\\
	-\sum_{i=x+1}^0 \log \rho_i &{\rm if} \ x\le -1,
\end{array}
\right.
\]
where the $\rho_i$'s are defined in \eqref{defrho}. Under hypothesis ({\it a}), Jensen's inequality gives $E[\log \rho_0^\kappa]\leq \log E[\rho_0^\kappa]=0$, and hypothesis ({\it b}) excludes the equality case $\rho_0=1$ a.s., hence $E[\log \rho_0]< 0$ and thus $V(x)\to\mp\infty$ a.s.~when $x\to\pm\infty$. 

Furthermore, we consider the weak descending ladder epochs
of the potential, defined by $e_0\defeq 0$ and
\begin{equation} \label{eqn:def_e_i}
  e_{i+1} \defeq  \inf \{ k > e_i: \; V(k) \le V(e_i)\}, \qquad i \ge 0.
\end{equation}
Observe that $(e_i-e_{i-1})_{i \ge 1}$ is a family of i.i.d.~random variables.
Moreover, hypothesis ({\it a}) of Theorem \ref{T:MAIN} implies that $e_1$ is exponentially integrable. Indeed, for all $n>0$, for any $\lambda>0$, $P(e_1>n)\leq P(V(n)>0)=P(\ee^{\lambda V(n)}>1)\leq E[\ee^{\lambda V(n)}]= E[\rho_0^\lambda]^n$, and $E[\rho_0^\lambda]<1$ for any $0<\lambda<\kappa$ by convexity of $s\mapsto E[\rho_0^s]$. 

It will be convenient to extend the sequence $(e_i)_{i\geq 0}$ to negative indices by letting
\begin{equation} \label{eqn:def_e_negatifs}
e_{i-1}\defeq \sup\{k< e_i:\; \forall l<k, V(l)\geq V(k)\}, \qquad i\leq 0.
\end{equation}
The structure of the sequence $(e_i)_{i\in\Z}$ will be better understood after Lemma \ref{lem:e_negatifs}. 

Observe that the intervals $(e_i,e_{i+1}], {i \in \Z},$ stand for the excursions of the potential above its past minimum, provided $V(x)\geq 0$ when $x\leq 0$. 
Let us introduce $H_i,$ the height of the excursion $(e_i,e_{i+1}]$, defined by 
\begin{align*}
H_i \defeq  \max_{e_i \le k \le e_{i+1}} \left(V(k)-V(e_i)\right), \qquad i\in\Z.
\end{align*}
Note that the random variables $(H_i)_{i\ge 0}$ are i.i.d. For notational convenience, we will write $H\defeq H_0$. 

In order to quantify what ``high excursions'' are, we need a key result of Iglehart  \cite{igle} which gives the tail probability of $H$, namely 
\begin{equation}\label{iglehartthm}
P (H > h) \sim C_I \, \ee^{- \kappa h}, \qquad h \to \infty,
\end{equation}
where
\begin{align*}\label{cst:Iglehart}
C_I\defeq{\frac{(1-E[\ee^{\kappa V(e_1)}])^2}{\kappa E[\rho_0^\kappa\log\rho_0]E[e_1]}}.
\end{align*}
This result comes from the following classical consequence of renewal theory (see \cite{feller}): if $S\defeq\sup_{k\geq 0} V(k)$, then
\begin{equation}\label{eqn:feller}
P(S>h) \sim C_F\,\ee^{-\kappa h},\qquad h\to\infty,
\end{equation}
where $C_F$ satisfies $C_I=(1-E[\ee^{\kappa V(e_1)}])C_F$. 

\subsection{The deep valleys}
\label{deepvalleys}

The notion of deep valley is relative to the space scale. Let $n\geq 2$. To define the corresponding deep valleys, we extract from the excursions of the potential above its minimum these whose heights are greater than a critical height $h_n,$ defined by
\begin{equation*}
\label{hcritic} h_n\defeq \frac{1}{\kappa}\log n - \log\log n. 
\end{equation*}
Moreover, let $q_n$ denote the probability that the height of such an excursion is larger than $h_n$. Due to \eqref{iglehartthm}, it satisfies
\begin{align*}
\label{eq:defqn}
q_n \defeq  P(H > h_n ) \sim C_I \, \ee^{- \kappa h_n}, \qquad n \to \infty.
\end{align*}

Then, let $(\sigma(i))_{i \ge 1}$ be the sequence of the indices of the successive excursions whose heights are greater than $h_n.$ More precisely,
\begin{align*}
\sigma(1)&\defeq \inf \{ j \ge 0 : H_j \ge h_n\},\\
\sigma(i+1)&\defeq \inf \{ j >\sigma(i): H_j \ge h_n \}, \qquad i \ge 1.
\end{align*}
We consider now some random variables depending only on
the environment, which define the deep valleys.
\medskip
\begin{definition}
 \label{defvalley}
For $i \ge 1,$ let us introduce
\begin{equation*}
a_i\defeq e_{\sigma(i)-D_n},\qquad b_i\defeq e_{\sigma(i)},\qquad {d}_i\defeq e_{\sigma(i)+1},
\end{equation*}
where
\begin{equation}\label{eqn:def_d_n}
D_n\defeq \left\lceil\frac{1+\gamma}{A \kappa}\log n\right\rceil,
\end{equation}
with arbitrary $\gamma>0$, and $A$ equals $E[-V(e_1)]$ if this expectation is finite and is otherwise an arbitrary positive real number. For every $i\geq 1$, the piece of environment $(\omega_{x})_{a_i< x\leq d_i}$ is called the $i$-th deep valley (with bottom at $b_i$). 
\end{definition}
Note that the definitions of $a_i$ and $d_i$ differ slightly from those in \cite{limitlaws}. 
We shall denote by $K_n$ the number of such deep valleys before $e_n,$ i.e. \begin{equation*}
K_n\defeq  \# \{ 0\leq i\leq n-1 : H_i\geq h_n \}.
\end{equation*}

\begin{remark*}
In wider generality, our proof adapts easily if we choose $h_n, D_n$ such that $n\ee^{-\kappa h_n}\to +\infty$, $D_n\geq Ch_n$ for a large $C$, and $n\ee^{-2\kappa h_n}D_n\to 0$. These conditions ensure respectively that the first $n$ deep valleys include the most significant ones, that they are wide enough (to the left) so as to make negligible the time spent on their left after the walk has reached their bottom, and that they are disjoint. A typical range for $h_n$ is $\frac{1}{2\kappa}\log n+(1+\alpha)\log\log n\leq h_n \leq \frac{1}{\kappa}\log n-\eps\log\log n$, where $\alpha,\eps>0$. 
\end{remark*}

%(Plut\^ot prendre $h_n$ tel que : $n\ee^{-\kappa h_n}\to +\infty$, $D_n=C\log n$ ($C$ suffisamment grand) dans tous les cas, et $n\ee^{-2\kappa h_n}D_n\to 0$. Les deux premi\`eres conditions assurent que l'on garde les vall\'ees les plus significatives et qu'elles sont assez larges pour que l'on puisse n\'egliger le temps pass\'e \`a leur gauche avant d'en sortir, et la derni\`ere garantit que les vall\'ees sont disjointes, donc qu'on ne garde pas trop de vall\'ees et qu'on ne les prend pas trop larges. Exemple de plage possible pour $h_n$ : $\frac{1}{2\kappa}\log n+(1+\alpha)\log\log n\leq h_n \leq \frac{1}{\kappa}\log n-\eps\log\log n$, o\`u $\alpha,\eps>0$)

\subsection{Proof sketch}\label{subsec:sketch}

The idea directing our proof of Theorem~\ref{T:MAIN} is that the time $\tau(e_n)$ splits into ({\it a}) the time spent at crossing the numerous ``small'' excursions, which will give the first order $nv^{-1}$ (or $n\log n$ if $\kappa=1$) and whose fluctuations are negligible on a scale of $n^{1/\kappa}$, and ({\it b}) the time spent inside deep valleys, which is on the order of $n^{1/\kappa}$, as well as its fluctuations, and will therefore provide the limit law after normalization. Moreover, with overwhelming probability, the deep valleys are disjoint and the times spent at crossing them may therefore be treated as independent random variables. 

The proof divides into three parts: reducing the time spent in the deep valleys to an i.i.d.~setting (Section \ref{sec:iid_valleys}); neglecting the fluctuations of the time spent in the shallow valleys (Section \ref{sec:interarrival}); and evaluating the tail probability of the time spent in \emph{one} valley (Section \ref{sec:propplougon}). These elements shall indeed enable us to apply a classical theorem relative to i.i.d.~heavy-tailed random variables (Section \ref{sec:preuve_thm}). Before that, a few preliminaries are necessary.

\section{Preliminaries} \label{sec:preliminaries}

This section divides into three independent parts. The first part recalls usual formulas about random walks in a one-dimensional potential. The second one adapts the main results from \cite{renewal} in the present context. Finally the last part is devoted to the effect of conditioning the potential on $\Z_-$ (bearing in mind that this half of the environment has little influence on the random walk), which is a technical tool to provide stationarity for several sequences. 

In the following, for any event $A$ on the environments such that $P(A)>0$, we use the notations
\begin{equation*}
P^A\defeq P(\, \cdot \, \vert \, A)\quad\text{ and }\quad \bP^A\defeq\bP(\,\cdot\,|\,A)=P_\omega\times P^A(d\omega). 
\end{equation*}
In addition, the specific notations
\begin{equation*}
\Pp\defeq P(\, \cdot\, |\, \forall k\leq 0, V(k)\geq 0)\quad\text{ and }\quad\bPp\defeq P_\omega\times\Pp(d\omega)
\end{equation*}
will prove themselves convenient. 

\subsection{Quenched formulas}\label{subsec:quench}

We recall here a few Markov chain formulas that are of repeated use throughout the paper. 

% Assume $\omega_x\in(0,1)$ for all $x\in\Z$. 

% For $x\in\Z$, let $\rho_x=\frac{1-\omega_x}{\omega_x}$. The potential $V=(V(x))_{x\in\Z}$ associated with $\omega$ is defined by $V(0)=0$ and $\rho_x=\ee^{V(x)-V(x-1)}$ for every $x\in\Z$, i.e.:
% \beq
% V(x)=\left\{\begin{array}{cl}\ds \sum_{0<k\leq x}\log\rho_k & \mbox{if }x\geq 0\\
% \ds-\sum_{x<k\leq 0}\log\rho_k & \mbox{if }x\leq 0.\end{array}\right. 
% \eeq

%Then, for any $a\in\Z$, the following function $f_a$ is harmonic with respect to $P_{a,\omega}$ and satisfies $f_a({\it a})=0$: 
%\beq
%f_a(x)=\left\{\begin{array}{cl}\ds \sum_{a\leq k< x}\ee^{V(k)-V({\it a})} & \mbox{if }x\geq a\\
%\ds\sum_{x\leq k< a}\ee^{V(k)-V({\it a})} & \mbox{if }x\leq a.\end{array}\right. 
%\eeq

\paragraph*{Quenched exit probabilities}
For any $a\leq x\leq b$, (see \cite{zeitouni}, formula (2.1.4))
\beq \label{eqn:zeitouni_p}
P_{x,\omega}(\tau(b)<\tau(a))=\frac{\sum_{a\leq k< x}\ee^{V(k)}}{\sum_{a\leq k<b}\ee^{V(k)}}.
\eeq
In particular, 
\beq \label{eqn:zeitouni_p2}
P_{x,\omega}(\tau(a)=\infty)=\frac{\sum_{a\leq k<x}\ee^{V(k)}}{\sum_{k\geq a}\ee^{V(k)}} 
\eeq
and
\beq \label{eqn:zeitouni_p3}
P_{a+1,\omega}(\tau(a)=\infty)=\bigg(\sum_{k\geq a}\ee^{V(k)-V(a)}\bigg)^{-1}. 
\eeq
Thus $P_{0,\omega}(\tau(1)=\infty)=\left(\sum_{k\le 0}\ee^{V(k)}\right)^{-1}\!=0$, $P$-a.s.~because $V(k)\to +\infty$ a.s.\ when $k\to-\infty$, and $P_{1,\omega}(\tau(0)=\infty)=\left(\sum_{k\ge 0}\ee^{V(k)}\right)^{-1}\!>0$, $P$-a.s.\ by the root test (using $E[\log\rho_0]<0$). This means that $X$ is transient to $+\infty$ $\bP$-a.s.

\paragraph*{Quenched expectation} For any $a<b$, $P$-a.s., (cf.~\cite{zeitouni})
\begin{align}
E_{a,\omega}[\tau(b)]
	& =  \sum_{a\leq j<b}\sum_{i\leq j}(1+\ee^{V(i)-V(i-1)})\ee^{V(j)-V(i)}\notag\\
%	& =  \sum_{a\leq j<b}\sum_{i\leq j}(\ee^{V(j)-V(i)}+\ee^{V(j)-V(i-1)})\notag\\
	& =  \sum_{a\leq j<b}\sum_{i\leq j}\alpha_{ij}\ee^{V(j)-V(i)} \label{eqn:zeitouni_e}
\end{align}
where $\alpha_{ij}=2$ if $i<j$, and $\alpha_{jj}=1$. Thus, we have 
\beq \label{eqn:zeitouni_e2}
E_{a,\omega}[\tau(b)]
	\leq 2\sum_{a\leq j<b}\sum_{i\leq j}\ee^{V(j)-V(i)}
\eeq 
and in particular 
\beq \label{eqn:zeitouni_e3}
E_{a,\omega}[\tau(a+1)]=1+2\sum_{i< a}\ee^{V(a)-V(i)}\leq 2\sum_{i\leq a}\ee^{V(a)-V(i)}. 
\eeq
\paragraph*{Quenched variance} For any $a<b$, $P$-a.s., (cf.~\cite{alili} or \cite{goldsheid})
\begin{align}
\Var_{a,\omega}(\tau(b))
	& =  4\sum_{a\leq k<b}\sum_{j\leq k}\ee^{V(k)-V(j)}(1+\ee^{V(j-1)-V(j)})\bigg(\sum_{l<j}\ee^{V(j)-V(l)}\bigg)^2 \label{eqn:alili}\\
	& =  4\sum_{a\leq k<b}\sum_{j<k}(\ee^{V(k)+V(j)}+\ee^{V(k)+V(j+1)})\bigg(\sum_{l\leq j}\ee^{-V(l)}\bigg)^2.\notag
\end{align}
Thus, we have 
\begin{align}
\Var_{a,\omega}(\tau(b))
	& \leq  8\sum_{a\leq k< b}\sum_{j\leq k}\ee^{V(k)+V(j)}\bigg(\sum_{l\leq j}\ee^{-V(l)}\bigg)^2\notag\\
	& \leq  16\sum_{a\leq k< b}\sum_{l'\leq l\leq j\leq k}\ee^{V(k)+V(j)-V(l)-V(l')}. \label{eqn:alili2}
\end{align}

\subsection{Renewal estimates}

In this section we recall and adapt results from \cite{renewal}, which are very useful to bound finely the expectations of exponential functionals of the potential.

%Note that, in terms of $V$, Kesten's renewal series writes $R=\sum_{k\geq 0}\ee^{V(k)}$. 

Let $R_-\defeq\sum_{k\leq 0}\ee^{-V(k)}$. Then Lemma 3.2 from~\cite{renewal}  proves that
\begin{equation} \label{eqn:r-}
\Ep[R_-]<\infty
\end{equation}
and that more generally all the moments of $R_-$ are finite under $\Pp$. 

Let us define
\begin{equation*}
T_H\defeq \min\{x\geq 0:\;V(x)= H\},
\end{equation*}
and
\begin{equation*}
M_1\defeq\sum_{k<T_H}\ee^{-V(k)},\qquad M_2\defeq\sum_{0\leq k<e_1}\ee^{V(k)-H}.
\end{equation*}

Let $Z\defeq M_1 M_2 \ee^H$. Theorem 2.2 (together with Remark 7.1) of \cite{renewal} proves that $\Pp(Z>t, H=S)\sim C_U t^{-\kappa}$ as $t\to\infty$, where
\begin{equation}\label{eqn:C_U}
C_U=C_I\left(\frac{C_K}{C_F}\right)^2=\kappa E[\rho_0^\kappa\log\rho_0]E[e_1](C_K)^2.
\end{equation}
The next lemma shows that the condition $\{H=S\}$ can be dropped. 

\begin{lemma}\label{lem:tail_z}
We have
\begin{equation*}
\Pp(Z>t)\sim \frac{C_U}{t^\kappa},\qquad t\to\infty. 
\end{equation*}
\end{lemma}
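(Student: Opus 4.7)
The plan is to decompose
\[
\Pp(Z > t) = \Pp(Z > t, H = S) + \Pp(Z > t, H < S),
\]
with $S \defeq \sup_{k \geq 0} V(k)$. The first summand is $\sim C_U t^{-\kappa}$ by the result from \cite{renewal} recalled just before the statement, so the whole task reduces to showing $\Pp(Z > t, H < S) = o(t^{-\kappa})$.

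For this I would exploit the decomposition $S = \max\!\big(H,\,V(e_1) + S'\big)$, where $S' \defeq \sup_{k \geq 0}\big(V(e_1+k)-V(e_1)\big)$ is, by the strong Markov property at the stopping time $e_1$ (still valid under $\Pp$ since the conditioning defining $\Pp$ lies in the $\sigma$-field generated by $(\omega_i)_{i \leq 0}$ and is thus independent of the post-$e_1$ environment), independent of $(M_1, M_2, H, V(e_1))$ and distributed as $S$ under $P$. Since $H \geq 0$ and $V(e_1) \leq 0$, the event $\{H < S\}$ coincides with $\{S' > H - V(e_1)\}$ with a nonnegative threshold, and \eqref{eqn:feller} yields a uniform bound $P(S > h) \leq C \ee^{-\kappa h}$ for all $h \geq 0$. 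Integrating $S'$ out one gets
\[
\Pp(Z > t,\, H < S) \leq C\, \Ep\!\left[\indic{Z > t}\, \ee^{-\kappa(H - V(e_1))}\right] \leq C\, \Ep\!\left[\indic{Z > t}\, \ee^{-\kappa H}\right]\!,
\]
the second inequality using $V(e_1) \leq 0$.

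On $\{Z > t\}$ the very identity $Z = M_1 M_2 \ee^H$ forces $\ee^{-\kappa H} < t^{-\kappa}(M_1 M_2)^\kappa$, hence
\[
\Pp(Z > t,\, H < S) \leq C\, t^{-\kappa}\, \Ep\!\left[(M_1 M_2)^\kappa \indic{Z > t}\right]\!,
\]
and it only remains to check $\Ep[(M_1 M_2)^\kappa] < \infty$: since $Z<\infty$ $\Pp$-a.s., $\indic{Z>t}\to 0$ $\Pp$-a.s., and dominated convergence closes the argument. Because $V(k) \geq 0$ for $0 \leq k \leq e_1$ (by definition of $e_1$), the crude bounds $M_1 \leq R_- + e_1$ and $M_2 \leq e_1$ hold, so $M_1 M_2 \leq R_- e_1 + e_1^2$; combining the exponential integrability of $e_1$ (noted in Section \ref{sec:valleys}) with the finiteness of all moments of $R_-$ under $\Pp$ (stated just after \eqref{eqn:r-}), a Cauchy--Schwarz inequality delivers $\Ep[(M_1 M_2)^\kappa] < \infty$ for every $\kappa < 2$. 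The step demanding most care is the independence argument used to extract $S'$ from the first-excursion data under $\Pp$, which rests on the product structure of $\Pp$ across the split $\Z_- \cup \Z_+$; everything else amounts to routine estimates.
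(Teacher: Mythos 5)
Your proof is correct, and it takes a somewhat different route from the paper's. The paper introduces an auxiliary threshold $\ell_t\to\infty$: it bounds $\Pp(Z>t,\,H<\ell_t)$ by Chebyshev via the second moment of $M_1M_2$ (here the hypothesis $\kappa<2$ is used to choose $\ell_t$ appropriately), and then shows $\Pp(Z>t,\,H>\ell_t,\,H<S)$ is negligible relative to $\Pp(Z>t,\,H>\ell_t)$ because $P(S'>\ell_t)\to0$. You instead split once and for all on $\{H=S\}$ versus $\{H<S\}$ and kill the latter in a single step: integrating out $S'$ gives the bound $C\,\Ep[\indic{Z>t}\ee^{-\kappa H}]$, then the identity $Z=M_1M_2\ee^H$ turns this into $C\,t^{-\kappa}\Ep[(M_1M_2)^\kappa\indic{Z>t}]$, and dominated convergence (with the $\kappa$-th moment of $M_1M_2$, rather than the second, as dominating function) finishes. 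Both arguments rest on the same two ingredients — the finiteness of all moments of $M_1M_2$ under $\Pp$, and the exponential tail together with independence of $S'$ from the first-excursion data (your justification of that independence via the product structure of $\Pp$ across $\Z_-\cup\Z_+$ is the right one) — but your rewriting of the constraint $\{Z>t\}$ lets you dispense with the truncation and the explicit use of $\kappa<2$, making the argument a bit tighter.
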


\begin{proof}
All moments of $M_1M_2$ are finite under $\Pp$. Indeed, $M_2\leq e_1$, $M_1\leq e_1+\sum_{k\leq 0}\ee^{-V(k)}$ and the random variables $e_1$ and $\sum_{k\leq 0}\ee^{-V(k)}$ have all moments finite under $\Pp$ (cf.~after \eqref{eqn:def_e_i} and \eqref{eqn:r-}). For any $\ell_t>0$, 
\[\Pp(Z>t, H<\ell_t)\leq\Pp(M_1 M_2>t\ee^{-\ell_t})\leq \frac{\Ep[(M_1M_2)^2]}{(t\ee^{-\ell_t})^2}.\]
Since $\kappa<2$, we may choose $\ell_t$ such that $\ell_t\to\infty$ and $t^\kappa=o(t^2\ee^{-2\ell_t})$ as $t\to\infty$, hence $\Pp(Z>t,H<\ell_t)=o(t^{-\kappa})$. On the other hand, $Z$ is independent of $S'=\sup_{x\geq e_1}V(x)-V(e_1)$, hence 
\[\Pp(Z>t, H>\ell_t, S>H)\leq\Pp(Z>t, H>\ell_t)\Pp(S'>\ell_t),\]
so that, as $t\to\infty$,
\[\Pp(Z>t, H>\ell_t) \sim \Pp(Z>t, H>\ell_t, H=S).\]
% Since we have $\ell_t\to\infty$ and $t^{-1}\ee^{\ell_t}=o(t^{-\kappa/2})\to 0$, Corollary 4.2 of \cite{renewal} gives $\Pp(Z>t,H< \ell_t,H=S)=o(t^{-\kappa})$. And Theorem 2.2 of \cite{renewal} proves that $\Pp(Z>t, H=S)\sim C_U t^{-\kappa}$ (with Remark 7.1 and Corollary 4.2 to justify truncating $M_2$ to the right), hence $\Pp(Z>t, H>\ell_t, H=S)\sim \Pp(Z>t, H=S)\sim C_U t^{-\kappa}$. 
Thus we finally have 
\begin{equation*}
\Pp(Z>t)=\Pp(Z>t, H>\ell_t)+\Pp(Z>t, H<\ell_t)=C_U t^{-\kappa}+o(t^{-\kappa}).\qedhere
\end{equation*}
\end{proof}

We will actually need moments involving 
\begin{equation*}
M'_1\defeq \sum_{k<e_1}\ee^{-V(k)}
\end{equation*}
instead of $M_1(\leq M'_1)$. The next result is an adaptation of Lemma 4.1 from \cite{renewal} (together with \eqref{iglehartthm}) to the present situation, with a novelty coming from the difference between $M'_1$ and $M_1$. 

\begin{lemma}\label{lem:renewalestimates} 
For any $\alpha,\beta,\gamma\geq 0$, there is a constant $C$ such that, for large $h>0$, 
\begin{equation}\label{eqn:m1m2_inf}
\Ep[(M'_1)^\alpha(M_2)^\beta \ee^{\gamma H}|H< h]\leq \left\{\begin{array}{cl}C & \text{if }\gamma<\kappa,\\ C h&\text{if }\gamma=\kappa,\\C \ee^{(\gamma-\kappa)h}&\text{if }\gamma>\kappa\end{array}\right.
\end{equation}
and, if $\gamma<\kappa$, 
\begin{equation}\label{eqn:m1m2_sup}
\Ep[(M'_1)^\alpha(M_2)^\beta \ee^{\gamma H}|H\geq h]\leq C\ee^{\gamma h}.
\end{equation}
\end{lemma}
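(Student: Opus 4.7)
The strategy is to reduce the bound for $M'_1$ to one for $M_1$ already essentially obtained in \cite{renewal}. Decomposing $M'_1 = M_1 + \Delta$, where $\Delta \defeq \sum_{T_H \leq k < e_1} \ee^{-V(k)}$, I will exploit that $V(k) \geq 0$ for $0 \leq k < e_1$ (by the very definition of $e_1$ as a weak descending ladder epoch), which bounds every summand in $\Delta$ by $1$ and gives $\Delta \leq e_1 - T_H \leq e_1$. Combined with the convexity inequality $(a+b)^\alpha \leq C_\alpha(a^\alpha + b^\alpha)$, this yields $(M'_1)^\alpha \leq C_\alpha(M_1^\alpha + e_1^\alpha)$. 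The expectations to be bounded therefore split into two contributions: the one with $M_1^\alpha$, for which the argument of Lemma 4.1 of \cite{renewal} (combined with Iglehart's tail estimate \eqref{iglehartthm}) transfers verbatim; and an error term of the form $\Ep[e_1^\alpha (M_2)^\beta \ee^{\gamma H}\ind_{H<h}]$ (resp.\ with $\ind_{H\geq h}$).

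For the error term, I plan to use the trivial bound $M_2 \leq e_1$ (each summand $\ee^{V(k)-H}\leq 1$ because $V(k)\leq H$ on $[0,e_1)$), together with the fact that the quantity depends only on $(\omega_i)_{i\geq 0}$, so that $\Ep=E$ here. It then suffices to bound $E[e_1^{k}\ee^{\gamma H}\ind_{H<h}]$ with $k=\alpha+\beta$. By Fubini,
\[
E[e_1^{k}\ee^{\gamma H}\ind_{H<h}] \leq E[e_1^k] + \int_0^h \gamma \ee^{\gamma s}\, E\bigl[e_1^{k}\ind_{s<H<h}\bigr]\,ds,
\]
so the key ingredient will be a joint tail estimate of the form $E[e_1^{k}\ind_{H>s}] \leq C(1+s^k)\ee^{-\kappa s}$. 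This follows by applying the strong Markov property at the first hitting time $T_s$ of level $s$: on the event $\{H>s\}=\{T_s<e_1\}$, which has probability at most $C\ee^{-\kappa s}$ by \eqref{eqn:feller}, one decomposes $e_1=T_s+(e_1-T_s)$ where both summands are first-passage-type variables enjoying exponential moments (cf.\ the argument after \eqref{eqn:def_e_i}). Plugging in and integrating will produce the three regimes $C$, $Ch$, $C\ee^{(\gamma-\kappa)h}$ in the cases $\gamma<\kappa$, $\gamma=\kappa$, $\gamma>\kappa$ respectively; the conditional bound \eqref{eqn:m1m2_inf} then follows since $\Pp(H<h)\to 1$.

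The companion bound \eqref{eqn:m1m2_sup} under $\{H\geq h\}$ is obtained from the same Markov decomposition at $T_h$: conditioning on $\{H\geq h\}$ amounts, up to a factor $P(H\geq h)^{-1}$ of order $\ee^{\kappa h}$, to inserting the indicator $\ind_{T_h<e_1}$, after which the overshoot $H-h$ has a finite exponential moment $E[\ee^{\gamma(H-h)}]$ for $\gamma<\kappa$ (by \eqref{iglehartthm} applied to the walk shifted to start at $h$), producing the factor $\ee^{\gamma h}$. I expect the main technical obstacle to be the borderline case $\gamma=\kappa$ of \eqref{eqn:m1m2_inf}: a crude H\"older-type inequality between $e_1^k$ and $\ee^{\gamma H}$ gives a bound of order $\ee^{\eps h}$ rather than the sharp $Ch$, so one genuinely needs to exploit the joint Markov decomposition of $(e_1,H)$ just sketched, rather than decoupling $e_1$ from $\ee^{\kappa H}$.
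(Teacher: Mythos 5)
Your decomposition $M'_1=M_1+\Delta$ is exactly the paper's, but the subsequent reduction is not, and it does not reach the stated bound. The crux of the lemma in the borderline cases $\gamma\geq\kappa$ is that $M'_1$ and $M_2$ have conditional moments that are bounded \emph{uniformly in $H$}: this is what Lemma~4.1 of \cite{renewal} gives for $M_1,M_2$ (conditionally on $\lfloor H\rfloor$ and $\{H=S\}$), and what the paper's new estimate~\eqref{eqn:4646} together with the reversal identity of Lemma~3.4 of \cite{renewal} supplies for $\Delta$. By contrast, replacing $\Delta$ and $M_2$ by $e_1$ destroys this uniformity, because on the event $\{H>s\}$ the excursion length $e_1$ genuinely grows like $s$ (it takes time of order $s$ to climb to level $s$ under the Doob transform), so your joint tail estimate $E[e_1^k\ind_{H>s}]\leq C(1+s^k)\ee^{-\kappa s}$ is the correct order, with an unavoidable polynomial factor. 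Plugging this into the Fubini integral gives, for $\gamma=\kappa$, $\int_0^h\gamma(1+s^k)\d s\asymp h^{k+1}$, and for $\gamma>\kappa$, order $h^k\ee^{(\gamma-\kappa)h}$ — i.e.\ you obtain $Ch^{\alpha+\beta+1}$ and $Ch^{\alpha+\beta}\ee^{(\gamma-\kappa)h}$ rather than the claimed $Ch$ and $C\ee^{(\gamma-\kappa)h}$ (and similarly an extra $h^{\alpha+\beta}$ in \eqref{eqn:m1m2_sup}). So your sentence ``plugging in and integrating will produce the three regimes'' is where the argument breaks: it produces polynomially weaker regimes whenever $\alpha+\beta\geq 1$.

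Two secondary points. First, the phrase ``both summands are first-passage-type variables enjoying exponential moments (cf.\ the argument after~\eqref{eqn:def_e_i})'' is incorrect for $e_1-T_s$: this is the first passage below $-V(T_s)$ by a fresh walk started at the (over-)shoot height, not a single weak ladder epoch, and its moments grow polynomially in $s$; moreover, controlling the overshoot $V(T_s)-s$ requires some care because the assumption $E[\rho_0^\kappa\log^+\rho_0]<\infty$ only guarantees a first moment under the $\kappa$-tilted law, not higher moments (what saves you is the extra damping $\ee^{-\kappa\cdot\mathrm{overshoot}}$ coming from $P(T_s<e_1)$, but this needs to be said). Second, to apply Lemma~4.1 of \cite{renewal} ``verbatim'' for the $M_1^\alpha M_2^\beta$ part you still need to insert the conditioning on $\{H=S\}$ and on $\lfloor H\rfloor$, i.e.\ the same preparatory reduction the paper carries out with the cutoff $\ell=\frac{1}{\gamma}\log h$; once that machinery is in place, it is more economical to treat $\Delta$ by the same conditional-moment route (as the paper does, via its inequality~\eqref{eqn:4646}) than to throw it together with $M_2$ into a crude $e_1$ bound. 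So the route you propose would prove a lemma with extra logarithmic losses, and patching it requires precisely the conditional uniform-in-$H$ bound on $\Delta$ that the paper establishes.
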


\begin{proof}
Let $M\defeq (M'_1)^\alpha (M_2)^\beta$. Note first that $M'_1\leq R_-+e_1$ and $M_2\leq e_1$, so that all moments of $M'_1$ and $M_2$ are finite under $\Pp$ (cf.~\eqref{eqn:r-} and after \eqref{eqn:def_e_i}). H\"older inequality then gives $\Ep[M]<\infty$ for any $\kappa$ and, since $\ee^H$ has moments up to order $\kappa$ (excluded) by \eqref{iglehartthm}, that $\Ep[(M'_1)^\alpha (M_2)^\beta \ee^{\gamma H}]<\infty$ for $\gamma<\kappa$, which proves the very first bound. 

Let us now prove in the other cases that we may insert the condition $\{S=H\}$ in the expectations. Let $\ell=\ell(h)\defeq\frac{1}{\gamma}\log h$. We have 
\begin{equation}\label{eqn:1464}
\Ep[M \ee^{\gamma H} \indic{H<h}]
	\leq \Ep[M] h+\Ep[M \ee^{\gamma H} \indic{H<h, H>\ell}]
\end{equation}
Since $M$ and $H$ are independent of $S'=\sup_{x\geq e_1} V(x)-V(e_1)$, and $\{S>H>\ell\}\subset\{S'>\ell\}$,  
\[\Ep[M \ee^{\gamma H}\indic{H<h, H>\ell}\indic{S>H}]\leq \Ep[M \ee^{\gamma H} \indic{H<h}]P(S'>\ell).\] 
Then $P(S'>\ell)\to 0$ when $h\to\infty$, hence \eqref{eqn:1464} becomes
\[\Ep[M \ee^{\gamma H} \indic{H<h}](1+o(1))\leq \Ep[M]h+\Ep[M\ee^{\gamma H} \indic{H<h}\indic{H=S}].\]
Given that $P(H=S)>0$, and $h\leq \ee^{(\gamma-\kappa)h}$ for large $h$ when $\gamma>\kappa$, it thus suffices to prove the last two bounds of \eqref{eqn:m1m2_inf} with $\Ep[M \ee^{\gamma H}|H<h,H=S]$ as the left-hand side. As for \eqref{eqn:m1m2_sup}, the introduction of $\ell$ is useless to prove similarly (skipping \eqref{eqn:1464}) that we may condition by $\{H=S\}$. 

For any $r>0$, by Lemma 4.1 of \cite{renewal}, 
\begin{equation}\label{eqn:borne_m2}
\Ep[(M_2)^r|\lfloor H\rfloor, H=S]\leq C_r.
\end{equation}

On the other hand, $M'_1=M_1+\sum_{T_H<k<e_1}\ee^{-V(k)}$. Let $r>0$. We have, by Lemma 4.1 of \cite{renewal}, $\Ep[(M_1)^r|\lfloor H\rfloor , H=S]\leq C_r$. As for the other term, it results from Lemma 3.4 of \cite{renewal} that $(H,\sum_{T_H\leq k<e_1}\ee^{-V(k)})$ has same distribution under $\Pp(\cdot|H=S)$ as $(H,\sum_{T^-_H<k\leq 0}\ee^{V(k)-H})$ where $T^-_H=\sup\{k\leq 0|V(k)>H\}$, and we claim that there is $C'_r>0$ such that, for all $N\in\N$, 
\begin{equation}\label{eqn:4646}
E\left[\bigg(\sum_{T^-_N<k\leq 0}\ee^{V(k)}\bigg)^r\right]\leq C'_r\ee^{rN}. 
\end{equation}
Before we prove this inequality, let us use it to conclude that
\begin{align}
\Ep[(M'_1)^r|\lfloor H\rfloor, H=S]
	&\leq 2^r(\Ep[(M_1)^r|\lfloor H\rfloor, H=S]+\ee^{-r\lfloor H\rfloor}C \ee^{r(\lfloor H\rfloor+1)})\notag\\
	&\leq C'. \label{eqn:borne_m1}
\end{align}
For readibility reasons, we write the proof of \eqref{eqn:4646} when $r=2$, the case of higher integer values being exactly similar and implying the general case (if $0<r<s$, $E[X^r]\leq E[X^s]^{r/s}$ for any positive $X$). 
We have
\begin{equation}\label{eqn:45678}
E\left[\bigg(\sum_{T_N^-<k\leq 0}\ee^{V(k)}\bigg)^2\right]
	\leq  \sum_{0\leq m,n<N} \ee^{n+1}\ee^{m+1}E[\nu([n,n+1)) \nu([m,m+1))]
\end{equation}
where $\nu(A)=\#\{k\leq 0:\;V(k)\in A\}$ for all $A\subset\R$. For any $n\in\N$, Markov property at time $T=\sup\{k\leq 0:\;V(k)\in[n,n+1)\}$ implies that
% $N([n,n+1))\leq_{\text{st}} N([-1,1))$, hence 
$E[\nu([n,n+1))^2]\leq E[\nu([-1,1))^2]$. This latter expectation is finite because $V(1)$ has a negative mean and is exponentially integrable; more precisely, $\nu([-1,1))$ is exponentially integrable as well: for $\lambda>0$, for all $k\geq 0$, $P(V(-k)<1)\leq \ee^{\lambda}E[\ee^{\lambda V(1)}]^k=\ee^\lambda E[\rho^\lambda]^k$ hence, choosing $\lambda>0$ small enough so that $E[\rho^\lambda]<1$ (cf.\ assumption ({\it a})), we have, for all $p\geq 0$, 
\begin{align*}
P(\nu([-1,1))>p)
	&\leq P(\exists k\geq p\text{ s.t. } V(-k)<1)\\
	& \leq \sum_{k\geq p}P(V(-k)<1)\leq \ee^\lambda(1-E[\rho^\lambda])^{-1} E[\rho^\lambda]^p. 
\end{align*}
Thus, using Cauchy-Schwarz inequality (or $ab\leq \frac{1}{2}(a^2+b^2)$) to bound the expectations uniformly, the right-hand side of \eqref{eqn:45678} is less than $C \ee^{2N}$ for some constant $C$. This proves \eqref{eqn:4646}. 

Finally, assembling \eqref{eqn:borne_m2} and \eqref{eqn:borne_m1} leads to
\begin{equation*}
\Ep[M|\lfloor H\rfloor, H=S]
	\leq \Ep[(M'_1)^{2\alpha}|\lfloor H\rfloor, H=S]^{1/2}\Ep[(M_2)^{2\beta}|\lfloor H\rfloor, H=S]^{1/2}
	\leq C
\end{equation*}
hence, conditioning by $\lfloor H\rfloor$, 
\begin{equation*}
\Ep[M\ee^{\gamma H}\indic{H<h}|H=S]\leq C' E[\ee^{\gamma (\lfloor H\rfloor+1)}\indic{\lfloor H\rfloor<h}]\leq C'' E[\ee^{\gamma H}\indic{H<h+1}], 
\end{equation*}
and similarly 
\begin{equation*}
\Ep[M\ee^{\gamma H}\indic{H>h}|H=S]\leq C'' E[\ee^{\gamma H}\indic{H>h-1}].
\end{equation*}
The conclusion of the lemma is then a simple consequence of the tail estimate \eqref{iglehartthm} and the usual formulas 
\begin{align*}
E[\ee^{\gamma H}\indic{H>h}] &=\ee^{\gamma h}P(H>h)+\int_h^\infty \gamma \ee^{\gamma u}P(H>u)\d u\\
\intertext{and}
E[\ee^{\gamma H}\indic{H<h}] &=1-\ee^{\gamma h}P(H>h)+\int_0^h \gamma\ee^{\gamma u}P(H>u)\d u. \qedhere
\end{align*}
\end{proof}

\subsection{Environment on the left of $\mathbf{0}$}

By definition, the distribution of the environment is translation invariant. However, the distribution of the ``environment seen from $e_k$'', i.e.~of $(\omega_{e_k+p})_{p\in\Z}$, depends on $k$. When suitably conditioning the environment on $\Z_-$, these problems vanish. 

Recall we defined both $e_i$ for $i\geq 0$ and $i\leq 0$, cf.~\eqref{eqn:def_e_negatifs}. 

\begin{lemma}\label{lem:e_negatifs}
Under $\Pp$, the sequence $(e_{i+1}-e_i)_{i\in\Z}$ is i.i.d., and more precisely the sequence of the excursions $(V(e_i+l)-V(e_i))_{0\leq l\leq e_{i+1}-e_i}$, $i\in\mathbb{Z}$, is i.i.d.. 
\end{lemma}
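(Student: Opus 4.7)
The plan is to separate the claim into two independent pieces, the forward excursions ($i \geq 0$) and the backward excursions ($i < 0$), using that the conditioning in $\Pp$ is measurable with respect to $V|_{\Z_-}$ alone. Since $(V(k))_{k \geq 0}$ and $(V(k))_{k \leq 0}$ are independent under $P$, the two halves remain independent under $\Pp$, and the law of $V|_{\Z_+}$ is unchanged. Writing $E_i \defeq (V(e_i+l)-V(e_i))_{0 \leq l \leq e_{i+1}-e_i}$, the sequence $(E_i)_{i \geq 0}$ is then i.i.d.\ under $\Pp$ by the standard strong Markov argument applied to $V$ at the stopping times $e_i$.

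The crux is to show that under $\Pp$, $E_{-1}$ has the same law as $E_0$ under $P$, and moreover that the environment to the left of $e_{-1}$ is independent of $E_{-1}$ and distributed as $V|_{\Z_-}$ under $\Pp$. To this end I would fix $T \geq 1$ and decompose $V|_{\Z_-}$ at the deterministic site $-T$ into an \emph{excursion piece} $U(l) \defeq V(-T+l) - V(-T)$ on $[-T, 0]$ and a \emph{tail piece} $W(m) \defeq V(-T-m) - V(-T)$ on $(-\infty, -T]$. By the i.i.d.~structure of the $\log\rho_i$'s, $U$ and $W$ are independent under $P$; moreover $U$ is distributed as $(V(l))_{0 \leq l \leq T}$ under $P$ and $W$ as $(V(-m))_{m \geq 0}$ under $P$. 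One verifies that on the event where $U$ has the shape of a forward excursion of length $T$ (i.e.\ $U(l) > 0$ for $0 < l < T$ and $U(T) \leq 0$), the extra requirement $\{e_{-1} = -T\} \cap \{V \geq 0 \text{ on } \Z_-\}$ is equivalent to $\{W(m) \geq 0\;\forall m \geq 0\}$: the excursion shape already forces $V(-T) = -U(T) \geq 0$, $V \geq 0$ on $[-T,0]$, and the absence of backward ladder points in $(-T, 0)$, while the condition $W \geq 0$ simultaneously makes $-T$ a backward ladder and ensures $V \geq 0$ on $(-\infty, -T]$. Since $P(W \geq 0\;\forall m) = P(V \geq 0 \text{ on } \Z_-)$, normalization gives
\begin{equation*}
\Pp\big((E_{-1}, W) \in A \times B\big) = P(E_0 \in A) \cdot \widetilde{P}(W \in B)
\end{equation*}
for all Borel $A, B$, where $\widetilde{P}$ is the law of $W$ conditional on $\{W \geq 0\;\forall m\geq 0\}$, which coincides (via the natural reversal $k \mapsto -k$) with the law of $V|_{\Z_-}$ under $\Pp$.

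This identity is regenerative: at $e_{-1}$ the environment restarts with its original $\Pp$-law, independently of $E_{-1}$. Iterating produces $(E_{-i})_{i \geq 1}$ i.i.d.\ with common law that of $E_0$ under $P$; combined with the forward case and the left/right independence from the first paragraph, this yields that $(E_i)_{i \in \Z}$ is i.i.d., and hence so is $(e_{i+1} - e_i)_{i \in \Z}$. The main obstacle I anticipate is the careful verification of the factorization in the middle paragraph, namely checking precisely that $\{V \geq 0 \text{ on } \Z_-\}$ together with the defining property of $e_{-1}$ splits cleanly into the excursion-shape condition on $[-T, 0]$ and the condition $W \geq 0$ on $(-\infty, -T]$, with no residual correlation between the two pieces.
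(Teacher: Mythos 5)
Your proof is correct and follows essentially the same route as the paper's: the paper likewise sums over the possible values $-T$ of $e_{-1}$, applies the shift of the environment, and exploits the factorization of $\{e_{-1}=-T\}\cap\{V\geq 0\text{ on }\Z_-\}$ into a shape condition on $(V(-T+l)-V(-T))_{0\leq l\leq T}$ and the condition $V(l)\geq V(-T)$ for $l\leq -T$, combined with the independence of disjoint blocks of the $\rho_i$'s. Your paragraph checking that factorization is precisely the computation the paper carries out with test functions $\Phi,\Psi$.
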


\begin{proof}
Let us denote $\mathcal{L}=\{\forall l<0,\ V(l)\geq 0\}$. Let $\Phi, \Psi$ be positive measurable functions respectively defined on finite paths indexed by $\{0,\ldots,k\}$ for any $k$ and on infinite paths indexed by $\mathbb{Z}_-$). % For any $k\in\mathbb{Z}$, let $(W^{(k)}(l))_{l\in\mathbb{Z}}=(V(k+l)-V(k))_{l\in\mathbb{Z}}$. Of course, $W^{(k)}$ and $V$ have same distribution under $\bP$. 
We have 
\begin{align*}
 & \hseq\Ep[\Psi((V(e_{-1}+l)-V(e_{-1}))_{l\leq 0})\Phi((V(e_{-1}+l)-V(e_{-1}))_{0\leq l\leq -e_{-1}})]\\
	& =  \sum_{k=-\infty}^0 E[\Psi((V(k+l)-V(k))_{l\leq 0})\Phi((V(k+l)-V(k))_{0\leq l\leq -k})\ind_{A_k}]P(\mathcal{L})^{-1},
\end{align*}
where $A_k\defeq\{e_{-1}=k\}\cap\mathcal{L}=\{\forall l<k, V(l)\geq V(k), V(k)\geq V(0), \forall k<l<0, V(l)>V(k)\}$. Using the fact that $(V(k+l)-V(k))_{l\in\mathbb{Z}}$ has same distribution as $(V(l))_{l\in\mathbb{Z}}$, this becomes
%	& = & \sum_{k=-\infty}^0 \bE[\Psi((W^{(k)}(l))_{l\leq 0})\Phi((W^{(k)}(l))_{0\leq l\leq -k})\ind_{(\forall l<0, W^{(k)}(l)>0, W^{(k)}(-k)<0,\forall 0<l<-k, W^{(k)}(l)>0)}]\bP(\mathcal{L})\\
\[\sum_{k=-\infty}^0 E[\Psi((V(l))_{l\leq 0})\ind_{(\forall l<0, V(l)\geq 0)}\Phi((V(l))_{0\leq l\leq -k})\ind_{(V(-k)\leq 0,\forall 0<l<-k, V(l)>0)}]P(\mathcal{L})^{-1}.\]
Finally, the independence between $(V(l))_{l\leq 0}$ and $(V(l))_{l\geq 0}$ shows that the previous expression equals 
\begin{align*}
 & \hseq\sum_{k=-\infty}^0 \Ep[\Psi((V(l))_{l\leq 0})]E[\Phi((V(l))_{0\leq l\leq -k})\ind_{(V(0)\geq V(-k),\forall 0<l<-k, V(l)>V(0))}]\\
	 & =  \Ep[\Psi((V(l))_{l\leq 0})]E[\Phi((V(l))_{0\leq l\leq e_1})],
\end{align*}
hence
\begin{align*}
  & \hseq\Ep[\Psi((V(e_{-1}+l)-V(e_{-1}))_{l\leq 0})\Phi((V(e_{-1}+l)-V(e_{-1}))_{0\leq l\leq -e_{-1}})]\\
	& =  \Ep[\Psi((V(l))_{l\leq 0})]E[\Phi((V(l))_{0\leq l\leq e_1})].
\end{align*}
By induction we deduce that, under $\Pp$, the excursions to the left are independent and distributed like the first excursion to the right. In addition, $(V(l))_{l\geq 0}$ and $(V(l))_{l\leq 0}$ are independent and, due to Markov property, the excursions to the right are i.i.d.. This concludes the proof: all the excursions, to the left or to the right, are independent and have same distribution under $\Pp$. 
\end{proof}

%We extend the sequence $(H_i)_{i\geq 0}$ accordingly: for $i<0$, $H_i=\max_{e_i\leq l<e_{i+1}} V(l)-V(e_i)$. In particular, $H_{-1}$ is the height of the excursion ending at 0. 

%\label{def:L_n}Let us define the event $\Lambda_n=\mathcal{L}\cap\{H_{-1}>h_n\}$ (recall $\mathcal{L}=\{\forall l<0, V(l)>0\}$). Thus, on $\Lambda_n$, 0 is the right end of a high excursion.

\begin{comment}
The following very simple lemma is given for later reference, when justifying substitution of $\bPp$ for $\bP$ and conversely. 
\begin{lemma}\label{lem:time_left} Let $\tautx{0}$ denote the total time spent on $\Z_-$ by the random walk. If $\kappa>1$, then $E[\tautx{0}]<\infty$ and $\Ep[\tautx{0}]<\infty$. 
\end{lemma}

\begin{proof}
Decomposing the trajectory into a sequence of excursions on $\Z_-$ we have, by Wald identity, $E_\omega[\tautx{0}]=E_\omega[N+1]E_\omega[\tau(0,1)]$, where $N$ is the number of crossings from $1$ to $0$. Then, by \eqref{eqn:zeitouni_p3}, $E_\omega[N+1]=P_{1,\omega}(\tau(0)<\infty)^{-1}=R$ and, by \eqref{eqn:zeitouni_e3}, $E_\omega[\tau(0,1)]\leq 2R_-$. Thus, $E[\tautx{0}]\leq 2E[RR_-]=2E[R]^2<\infty$ if $\kappa>1$. The case of $\Ep$ follows. 
\end{proof}
\end{comment}

\section{Independence of the deep valleys} \label{sec:iid_valleys}

The independence between deep valleys goes through imposing these valleys to be disjoint (i.e. $a_i>d_{i-1}$ for all $i$) and neglecting the time spent on the left of a valley while it is being crossed (i.e.~the time spent on the left of $a_i$ before $a_{i+1}$ is reached). 

\paragraph*{NB} All the results and proofs from this section hold for any parameter $\kappa>0$. 

%\subsection{Definitions and results}

For any integers $x,y,z$, let us define the time
\begin{equation*}
\tautx{z}(x,y)\defeq\#\{\tau(x)\leq k\leq \tau(y):\;X_k\leq z\}
\end{equation*}
spent on the left of $z$ between the first visit to $x$ and the first visit to $y$ coming next, and the total time \[\tautx{z}\defeq\#\{k\geq \tau(z):\;X_k\leq z\}\] spent on the left of $z$ after the first visit of $z$. Of course, $\tautx{z}(x,y)\leq\tautx{z}$ if $z\leq x$. %We will notably be interested in the time spent on the left of $0$ (cf.~Lemma \ref{lem:time_left}) and $a_i$. 

We consider the event
\begin{equation*}\label{def:non_overlap}
\nonoverlap\defeq\{0<a_1\}\cap\bigcap_{i=1}^{K_n-1}\{d_i<a_{i+1}\},
\end{equation*}
which means that the large valleys before $e_n$ lie entirely on $\Z_+$ and don't overlap.

%Def: $D_n=\frac{1}{\kappa}\log n+\alpha\log\log n$, o\`u $\alpha>0$ (quelconque). (ou $\alpha>-1$ ?)

The following two propositions will enable us to reduce to i.i.d.~deep valleys. %The limit \eqref{eq:lim_taut_d} is involved in the proof of \eqref{eq:lim_tau_overlap}, while \eqref{eq:lim_taut_a} and \eqref{eq:lim_tau_overlap} are used in section~\ref{sec:preuve_thm}. 

\begin{proposition}\label{prop:overlap}
We have
\begin{equation*}\label{eq:lim_tau_overlap}
P(\nonoverlap)\limites{}{n} 1.
\end{equation*}
\end{proposition}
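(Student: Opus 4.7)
The plan is to reduce the overlap event to a simple condition on the indices $\sigma(i)$ and then apply a first-moment bound. The key point is that, by the strong Markov property applied to the i.i.d.\ sequence $(H_j)_{j\ge 0}$, the variables $\sigma(i+1)-\sigma(i)$ for $i\geq 1$ (and $\sigma(1)+1$) are i.i.d.\ geometric random variables with parameter $q_n$, so their typical size $1/q_n\sim n/(\log n)^\kappa$ is much larger than $D_n=O(\log n)$.

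Since $(e_j)_{j\in\Z}$ is strictly increasing, one has $\{a_1\leq 0\}=\{\sigma(1)\leq D_n\}$ and, for $i\geq 1$,
\[
\{d_i\geq a_{i+1}\}=\{e_{\sigma(i)+1}\geq e_{\sigma(i+1)-D_n}\}=\{\sigma(i+1)-\sigma(i)\leq D_n+1\}.
\]
Combined with the trivial inclusion $\{i\leq K_n-1\}\subset\{\sigma(i)\leq n-1\}$ and the independence of $\sigma(i+1)-\sigma(i)$ from $\sigma(i)$, a union bound yields
\[
P(\overlap)\leq P(\sigma(1)\leq D_n)+\sum_{i=1}^\infty P(\sigma(i)\leq n-1)\,P(\sigma(i+1)-\sigma(i)\leq D_n+1).
\]
Each probability involving $\sigma(\cdot+1)-\sigma(\cdot)$ (and the term $P(\sigma(1)\leq D_n)$) is at most $(D_n+1)q_n$, and the Wald-style identity
\[
\sum_{i\geq 1}P(\sigma(i)\leq n-1)=E\bigl[\#\{i\geq 1:\sigma(i)\leq n-1\}\bigr]=E[K_n]=nq_n
\]
gives $P(\overlap)\leq (D_n+1)q_n(1+nq_n)$. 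This identity is crucial: the crude bound $K_n\leq n$ would produce $nD_nq_n=O((\log n)^{\kappa+1})$, which diverges.

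To conclude, the tail estimate \eqref{iglehartthm} together with $h_n=\frac{1}{\kappa}\log n-\log\log n$ gives $q_n\sim C_I(\log n)^\kappa/n$, so that both $(D_n+1)q_n=O((\log n)^{\kappa+1}/n)$ and $nq_n^2(D_n+1)=O((\log n)^{2\kappa+1}/n)$ tend to $0$. Hence $P(\overlap)\to 0$, i.e.\ $P(\nonoverlap)\to 1$. No substantial obstacle is anticipated; the only delicate point is the Wald-type handling of the random upper index $K_n-1$, as described above.
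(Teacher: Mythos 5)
Your proof is correct and follows essentially the same route as the paper: a union bound over the deep valleys, with each valley overlapping its predecessor with probability at most $(D_n+1)q_n$ and the number of relevant valleys entering through $nq_n\sim C_I(\log n)^\kappa$, so that the final bound is of order $(\log n)^{2\kappa+1}/n\to0$. The only real difference is bookkeeping for the random number of terms: the paper truncates $K_n$ at $(1+\eps)C_I(\log n)^\kappa$ using the law of large numbers for the binomial $K_n$, whereas you use the first-moment identity $\sum_{i\ge1}P(\sigma(i)\le n-1)=E[K_n]=nq_n$ together with the strong-Markov independence of $\sigma(i+1)-\sigma(i)$ from $\{\sigma(i)\le n-1\}$ — an equally valid and slightly more self-contained device.
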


\begin{proof}[Proof of Proposition \ref{prop:overlap}]
Choose $\eps>0$ and define the event\[A_K(n)\defeq\{K_n\leq(1+\eps)C_I(\log n)^\kappa\}.\] Since $K_n$ is a binomial random variable of mean $nq_n\sim_n C_I(\log n)^\kappa$, it follows from the law of large numbers that $P(A_K(n))$ converges to $1$ as $n\to\infty$. On the other hand, if the event $\overlap$ occurs, then there exists $1\leq i\leq K_n$ such that there is at least one high excursion among the first $D_n$ excursions to the right of $d_{i-1}$ (with $d_0=0$). Thus, 
\begin{align*}
P(\overlap)
	& \leq P(A_K(n)^c)+(1+\eps)C_I(\log n)^\kappa (1-(1-q_n)^{D_n})\\
	& \leq o(1)+(1+\eps)C_I(\log n)^\kappa q_n D_n = o(1)
\end{align*}
Indeed, for any $0<x<1$ and $\alpha>0$, we have $1-(1-x)^\alpha\leq\alpha x$ by concavity of $x\mapsto 1-(1-x)^\alpha$.
\end{proof}

For $x\geq 0$, define
\begin{equation}
a(x)\defeq \max\{e_k:\;k\in\Z,\ e_{k+D_n}\leq x\}. 
\end{equation}
In particular, $a(b_i)=a_i$ for all $i\geq 1$, and $a(0)=e_{-D_n}$. 

\begin{proposition}\label{prop:lim_taut}
Under $\bP$, 
\begin{equation*}\label{eq:lim_taut_a}
\frac{1}{n^{1/\kappa}}\sum_{i=1}^{K_n}\tautx{a_i}(b_i,d_i)=\frac{1}{n^{1/\kappa}}\sum_{k=0}^{n-1}\tautx{a(e_k)}(e_k,e_{k+1})\indic{H_k\geq h_n}\limites{(p)}{n} 0.
\end{equation*}
\end{proposition}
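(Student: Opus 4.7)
The plan is to apply Markov's inequality to reduce the convergence in probability to an $L^1$ estimate. The two expressions in the statement are identical via the reindexing $k=\sigma(i)$ and $a(e_{\sigma(i)}) = a_i$, so it suffices to show
\begin{equation*}
\bE\!\left[\indic{\nonoverlap}\sum_{i=1}^{K_n}\tautx{a_i}(b_i,d_i)\right] = o(n^{1/\kappa}),
\end{equation*}
since $\bP(\overlap) \to 0$ by Proposition \ref{prop:overlap}. Restricting to $\nonoverlap$ has the advantage that the deep valleys are disjoint subsets of $\Z_+$, so the successive pieces of environment attached to different valleys are genuinely independent.

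The next step is a quenched decomposition using the strong Markov property at successive visits to $b_i$. Each such visit starts an excursion which either reaches $d_i$ before $a_i$ (probability $1-p_i$, with $p_i \defeq P_{b_i,\omega}(\tau(a_i)<\tau(d_i))$) or first reaches $a_i$ (probability $p_i$) and eventually returns to $b_i$. Only the second type of excursion contributes to $\tautx{a_i}(b_i,d_i)$, and a geometric-series summation yields
\begin{equation*}
E_\omega[\tautx{a_i}(b_i,d_i)] \;=\; \frac{p_i}{1-p_i}\,E_{a_i,\omega}[\tautx{a_i}(a_i,b_i)] \;\leq\; \frac{p_i}{1-p_i}\,E_{a_i,\omega}[\tau(b_i)].
\end{equation*}

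The core of the proof is then estimating these two factors via Subsection \ref{subsec:quench}. From \eqref{eqn:zeitouni_p},
\begin{equation*}
p_i \leq (d_i-b_i)\,\ee^{H_{\sigma(i)}}\,\ee^{-(V(a_i)-V(b_i))},
\end{equation*}
where the barrier $V(a_i)-V(b_i)=\sum_{j=\sigma(i)-D_n}^{\sigma(i)-1}(V(e_j)-V(e_{j+1}))$ is a sum of $D_n$ i.i.d.~non-negative variables (after the conditioning from Lemma \ref{lem:e_negatifs}). Since $E[\rho_0^\lambda]<1$ for sufficiently small $\lambda>0$ (by convexity, as used after \eqref{eqn:def_e_i}), an exponential Chebyshev bound applied to $\ee^{-\lambda(V(a_i)-V(b_i))}$ shows that this barrier is at least $cD_n$ outside an event of probability decaying faster than any polynomial in $n$; with the choice \eqref{eqn:def_d_n} this gives $\ee^{-(V(a_i)-V(b_i))} \leq n^{-(1+\gamma')/\kappa}$ for some $\gamma'>0$, up to negligible exceptional events. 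The remaining factors $\ee^{H_{\sigma(i)}}$ and $(d_i-b_i)$ produce only $(\log n)^{O(1)}$ after annealed averaging, by Lemma \ref{lem:renewalestimates} (conditionally on $H\geq h_n$) and by exponential integrability of $e_1$. Finally, $E_{a_i,\omega}[\tau(b_i)]$ is bounded using \eqref{eqn:zeitouni_e2}: the inner sum $\sum_{k\leq j}\ee^{V(j)-V(k)}$ is split into the contribution of $k\in[a_i,j]$, whose annealed moments are controlled by Lemma \ref{lem:renewalestimates}, and the contribution of $k<a_i$, which is bounded by a shifted copy of $R_-$ and hence has finite moments by \eqref{eqn:r-} after using the stationarity from Lemma \ref{lem:e_negatifs}.

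Combining these bounds, each summand satisfies $\bE[\tautx{a_i}(b_i,d_i)\indic{\nonoverlap}] \leq n^{-(1+\gamma')/\kappa}(\log n)^{C}$; since $K_n \leq (1+\eps)C_I(\log n)^\kappa$ with high probability (as in the proof of Proposition \ref{prop:overlap}), the total is $o(n^{1/\kappa})$. The main obstacle is the joint treatment of the various ingredients (barrier, height, excursion length, left-tail of the potential), which depend on overlapping pieces of environment; I would resolve this by splitting $E_{a_i,\omega}[\tau(b_i)]$ as above so that each resulting factor depends on a clearly identified, essentially independent piece of environment, and by exploiting the disjointness granted by $\nonoverlap$ together with the stationarity from Lemma \ref{lem:e_negatifs}.
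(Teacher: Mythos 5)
Your high-level strategy (Markov inequality, geometric decomposition of the crossing time, exponential Chebyshev on the barrier, Lemma \ref{lem:renewalestimates} for the shallow excursions, and Lemma \ref{lem:e_negatifs} for stationarity) is in the same spirit as the paper's proof, but the key estimate is organized differently and this difference creates a genuine gap.

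The paper decomposes $\tautx{a_i}(b_i,d_i)$ into excursions to the left of $a_i$ and computes the \emph{exact} identity $E_\omega[N]=M_2\,\ee^{H-V(e_{-D_n})}$ for the number $N$ of crossings of $a_i$; it then introduces the event $A_n=\{H_{\sigma(i)}\le V(a_i)-V(b_i)\text{ for all }i\}$, on which $E_\omega[N]\le M_2$, so the factor $\ee^H$ disappears before one ever takes an annealed expectation, and the whole term reduces to $\Ep[R_-]\,E[M_2\ind_{\{H>h_n\}}]$, manifestly finite for all $\kappa>0$. Your decomposition puts the geometric trial at $b_i$ rather than at $a_i+1$, which yields only the \emph{inequality} $p_i\le (d_i-b_i)\,\ee^{H_{\sigma(i)}}\,\ee^{-(V(a_i)-V(b_i))}$ in which $\ee^{H_{\sigma(i)}}$ is no longer paired with a compensating factor. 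Conditionally on $\{H_{\sigma(i)}\ge h_n\}$, $\Ep[\ee^{H}\,|\,H\ge h_n]$ is of order $\ee^{h_n}=n^{1/\kappa}/\log n$ when $\kappa>1$ (so your claimed per-summand bound $n^{-(1+\gamma')/\kappa}(\log n)^C$ is off by a factor of roughly $n^{1/\kappa}$, though the final conclusion survives), and it is \emph{infinite} when $\kappa\le 1$, which is precisely the regime needed for the main theorem (the paper explicitly notes that Section \ref{sec:iid_valleys} must hold for all $\kappa>0$). Capping $p_i$ at $1$ would help but then the computation no longer factorizes cleanly as you sketch.

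There is a second, related gap: you assert that the barrier $V(a_i)-V(b_i)$ is at least $cD_n$ ``outside an event of probability decaying faster than any polynomial in $n$.'' Since $D_n\sim\frac{1+\gamma}{A\kappa}\log n$, a Cram\'er/Chebyshev bound for a sum of $D_n$ i.i.d.\ variables gives probability $\ee^{-I D_n}=n^{-I(1+\gamma)/(A\kappa)}$, which is polynomial, not super-polynomial, and the rate $I$ is fixed once $c$ is chosen. More importantly, on this exceptional event $\tautx{a_i}(b_i,d_i)$ need not be integrable (it is exactly the point of the paper's event $A_n$ to avoid ever taking an expectation on the bad set: the paper bounds $\Pp((A_n)^c)$ separately by Markov on the \emph{probability} and only takes expectations on $A_n$). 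Your proposal should be rewritten with that same two-step structure: first separate out a good event, carefully formulated so that the conditional expectation of the remaining quantity is finite, and only then estimate; and the good event should compare the barrier to $H_{\sigma(i)}$ itself (as the paper's $A_n$ does), rather than to a deterministic multiple of $D_n$.
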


%\paragraph*{Remark} 
%By Markov inequality we deduce: under $P_o$,
%\begin{equation} \label{eq:lim_taut_e_n_probab} 
%\frac{1}{n^{1/\kappa}}\sum_{i=1}^{K_n}\taut^{(d_i)}(d_i,d_{i+1}\wedge e_n)\limites{(p)}{n} 0\mbox{ and } \frac{1}{n^{1/\kappa}}\sum_{i=1}^{K_n}\taut^{(a_i)}(b_i,d_i)\limites{(p)}{n} 0.
%\end{equation}

%\paragraph*{Remark} By Markov inequality we deduce: under $P_o$, 
%\begin{equation}
%\frac{1}{n^{1/\kappa}}\left(\tau(e_n)\ind_{{\rm overlap}_n}-\bE\left[\tau(e_n)\ind_{{\rm overlap}_n}\right]\right)\underset{n \rightarrow \infty}{\overset{(p)}{\longrightarrow}} 0. 
%\end{equation}

\begin{proof}
The equality is trivial from the definitions. The second expression has the advantage that, under $\bPp$, all the terms have same distribution because of Lemma~\ref{lem:e_negatifs}. To overcome the fact that $\tautx{a(0)}(0,e_1)\indic{H\geq h_n}$ is not integrable for $0<\kappa\leq 1$, we introduce the event 
\begin{equation*}
	\begin{split}
A_n
	&\defeq \{\text{for }i=1,\ldots,K_n,\ H_{\sigma(i)}\leq V(a_i)-V(b_i)\}\\
	& = \bigcap_{k=0}^{n-1}\{H_k<h_n\}\cup\{h_n\leq H_k\leq V(e_{k-D_n})-V(e_k)\}.
	\end{split} 
\end{equation*}
Let us prove that $\Pp((A_n)^c)=o_n(1)$. By Lemma~\ref{lem:e_negatifs}, 
\begin{align*}
\Pp((A_n)^c)
	& \leq n \Pp(H\geq h_n,\ H> V(e_{-D_n})). 
\end{align*}
Choose $0<\gamma'<\gamma''<\gamma$ (cf.~\eqref{eqn:def_d_n}) and define $l_n=\frac{1+\gamma'}{\kappa}\log n$. We get
\begin{align}
\Pp((A_n)^c)
	 \leq n\big( P(H\geq l_n)+P(H\geq h_n)\Pp(V(e_{-D_n})<l_n)\big). \label{eqn:4697}
\end{align} 
Equation \eqref{iglehartthm} gives $P(H\geq l_n)\sim_n C_I\ee^{-\kappa l_n}=C_I n^{-(1+\gamma')}$ and $P(H\geq h_n)\sim_n C_I n^{-1}(\log n)^\kappa$. Under $\Pp$, $V(e_{-D_n})$ is the sum of $D_n$ i.i.d.~random variables distributed like $-V(e_1)$. Therefore, for any $\lambda>0$, 
\begin{equation*}
\Pp(V(e_{-D_n})<l_n)
	\leq \ee^{\lambda l_n} E[\ee^{-\lambda (-V(e_1))}]^{D_n}. 
\end{equation*}
Since $\frac{1}{\lambda}\log E[\ee^{-\lambda(-V(e_1))}]\to -E[-V(e_1)]\in[-\infty,0)$ as $\lambda\to 0^+$, we can choose $\lambda>0$ such that $\log E[\ee^{-\lambda(-V(e_1))}]<-\lambda A\frac{1+\gamma''}{1+\gamma}$ (where $A$ was defined after \eqref{eqn:def_d_n}), hence $E[\ee^{-\lambda(-V(e_1))}]^{D_n}\leq n^{-\lambda\frac{1+\gamma''}{\kappa}}$. Thus, $\Pp(V(e_{-D_n})<l_n)\leq n^{-\lambda \frac{\gamma''-\gamma'}{\kappa}}$. Using these estimates in \eqref{eqn:4697} concludes the proof that $\Pp((A_n)^c)=o_n(1)$. 

Let us now prove the Proposition itself. By Markov inequality, for all $\delta>0$, 
\begin{align}
 &  \hseq\bPp\left(\frac{1}{n^{1/\kappa}}\sum_{k=0}^{n-1}\tautx{a(e_k)}(e_k,e_{k+1})\indic{H\geq h_n}>\delta\right)\notag\\
	 & \leq  \Pp((A_n)^c)+\frac{1}{\delta n^{1/\kappa}}\bEp\left[\sum_{k=0}^{n-1}\tautx{a(e_k)}(e_k,e_{k+1})\indic{H\geq h_n}\ind_{A_n}\right]\notag\\
	 & \leq  o_n(1) + \frac{n}{\delta n^{1/\kappa}}\Ep\left[E_\omega[\tautx{e_{-D_n}}(0,e_1)]\indic{H>h_n,\ H<V(e_{-D_n})}\right]. \label{eqn:1564}
\end{align}
Note that we have $E_\omega[\taut^{(e_{-D_n})}(0,e_1)]=E_\omega[N]E_\omega[T_1]$, where $N$ is the number of crossings from $e_{-D_n}+1$ to $e_{-D_n}$ before the first visit at $e_1$, and $T_1$ is the time for the random walk to go from $e_{-D_n}$ to $e_{-D_n}+1$ (for the first time, for instance); furthermore, these two terms are independent under $\Pp$. Using \eqref{eqn:zeitouni_p}, we have
\begin{equation*}
E_\omega[N]=\frac{P_{0,\omega}(\tau(e_{-D_n})<\tau(e_1))}{P_{e_{-D_n}+1,\omega}(\tau(e_1)<\tau(e_{-D_n}))}= \sum_{0\leq x<e_1}\ee^{V(x)-V(e_{-D_n})}=M_2\ee^{H-V(e_{-D_n})}
\end{equation*}
hence, on the event $\{H<V(e_{-D_n})\}$, $E_\omega[N]\leq M_2$. 

The length of an excursion to the left of $e_{-D_n}$ is computed as follows, due to \eqref{eqn:zeitouni_e3}:
\begin{equation*}
E_\omega[T_1]=E_{e_{-D_n},\omega}[\tau(e_{-D_n}+1)]\leq 2\sum_{x\leq e_{-D_n}}\ee^{-(V(x)-V(e_{-D_n}))}. 
\end{equation*}
The law of $(V(x)-V(e_{-D_n}))_{x\leq e_{-D_n}}$ under $\Pp$ is $\Pp$ because of Lemma \ref{lem:e_negatifs}. Therefore, 
\begin{equation*}
\Ep[E_\omega[T_1]]\leq 2\Ep\bigg[\sum_{x\leq 0}\ee^{-V(x)}\bigg]=2\Ep[R_-]<\infty
\end{equation*}
with \eqref{eqn:r-}. We conclude that the right-hand side of \eqref{eqn:1564} is less than
\begin{align*}
o_n(1)+\frac{n}{\delta n^{1/\kappa}}2\Ep[R_-]E[M_2\indic{H>h_n}]. 
\end{align*}
Since Lemma \ref{lem:renewalestimates} gives $E[M_2\indic{H>h_n}]\leq CP(H>h_n)\sim_n C'\ee^{-\kappa h_n}=C'n^{-1}(\log n)^\kappa$, this whole expression converges to 0, which concludes. 
\end{proof}

\section{Fluctuation of interarrival times} \label{sec:interarrival}

For any $x\leq y$, we define the inter-arrival time  $\tau(x,y)$ between sites $x$ and $y$ by 
\begin{equation*}\label{hittimerwia}
    \tau(x,y)\defeq  \inf \{ n \ge 0: \, X_{\tau(x)+n}=y \}, \qquad x,y \in \Z.
\end{equation*}

Let
\begin{equation*}
\tia\defeq\sum_{i=0}^{K_n}\tau(d_i,b_{i+1}\wedge e_n)=\sum_{k=0}^{n-1}\tau(e_k,e_{k+1})\indic{H_k<h_n}
\end{equation*}
(with $d_0=0$) be the time spent at crossing small excursions before $\tau(e_n)$. The aim of this section is the following bound on the fluctuations of $\tia$. 
\begin{proposition}\label{prop:tau_ia}
For any $1\leq \kappa<2$, under $\bPp$, 
\begin{equation} \label{eqn:limite_fluctu_tia}
\frac{1}{n^{1/\kappa}}(\tia-\bEp[\tia])\limites{(p)}{n}0. 
\end{equation}
\end{proposition}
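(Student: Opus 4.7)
My plan is to apply Chebyshev's inequality under $\bPp$, reducing the problem to proving $\bVarp(\tia) = o(n^{2/\kappa})$. Using the total variance formula I decompose
\[\bVarp(\tia) = \Ep[\Var_\omega(\tia)] + \Varp(E_\omega[\tia]),\]
and treat the two terms separately. For the quenched term, the strong Markov property at the stopping times $\tau(e_k)$ makes the interarrival times $\tau(e_k,e_{k+1})$ conditionally independent given $\omega$, so
\[\Var_\omega(\tia) = \sum_{k=0}^{n-1}\Var_\omega(\tau(e_k,e_{k+1}))\indic{H_k<h_n}.\]
By the stationarity of excursions under $\Pp$ (Lemma~\ref{lem:e_negatifs}) applied to the environment shifted by $e_k$, every summand has expectation $\Ep[\Var_\omega(\tau(e_1))\indic{H<h_n}]$ under $\Ep$. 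Plugging the bound \eqref{eqn:alili2} in, regrouping the exponentials by excursion and invoking Lemma~\ref{lem:renewalestimates} with $\gamma=2>\kappa$ yields a bound $\leq C\ee^{(2-\kappa)h_n}=Cn^{2/\kappa-1}(\log n)^{-(2-\kappa)}$, whence $\Ep[\Var_\omega(\tia)] = o(n^{2/\kappa})$.

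For the environmental term, I split the quenched mean via \eqref{eqn:zeitouni_e} into an intrinsic piece and a left-boundary piece:
\[E_\omega[\tau(e_k,e_{k+1})] = A_k + 2L_k R_k,\]
where $A_k := \sum_{e_k\leq i\leq j<e_{k+1}}\alpha_{ij}\ee^{V(j)-V(i)}$ and $R_k := \sum_{e_k\leq j<e_{k+1}}\ee^{V(j)-V(e_k)}$ depend only on the $k$-th excursion, while $L_k := \sum_{i<e_k}\ee^{V(e_k)-V(i)}$ absorbs the left tail. By Lemma~\ref{lem:e_negatifs}, the sequences $(A_k\indic{H_k<h_n})_k$ and $(R_k\indic{H_k<h_n})_k$ are i.i.d.~under $\Pp$, and $L_k$ is independent of all excursions from the $k$-th onward. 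The intrinsic sum is then straightforward: using $A_0 \leq 2M'_1 M_2\ee^H$ together with Lemma~\ref{lem:renewalestimates} (with $\alpha=\beta=\gamma=2$),
\[\Varp\Big(\sum_{k=0}^{n-1}A_k\indic{H_k<h_n}\Big) \leq n\,\Ep[A_0^2\indic{H<h_n}] \leq Cn\,\ee^{(2-\kappa)h_n} = o(n^{2/\kappa}).\]

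The main obstacle is the crossed sum $\sum_k L_k R_k\indic{H_k<h_n}$, whose summands are correlated because the left-tail $L_k$ depends on all excursions prior to $e_k$. The key observation is that $L_k$ is a stationary Markov chain under $\Pp$ satisfying
\[L_{k+1} = \ee^{-\xi_{k+1}}L_k + r_k, \qquad \xi_{k+1} := V(e_k)-V(e_{k+1})\geq 0,\]
where $(\xi_{k+1}, r_k)$ is an i.i.d.~sequence of functions of the $k$-th excursion and the contraction constant $c := \Ep[\ee^{-\xi_1}]$ is strictly less than $1$ (since assumption~(b) forces $\xi_1>0$ with positive probability). Iterating, $L_{k+j} = \ee^{-(\xi_{k+1}+\cdots+\xi_{k+j})}L_k + S_{k,j}$ with $S_{k,j}$ a function of the excursions indexed by $k,\ldots,k+j-1$. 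Combining the independence of $L_k$ from all later excursions, the independence of $(R_{k+j}, H_{k+j})$ from everything else, and the fact that $\xi_{k+2},\ldots,\xi_{k+j}$ are independent of the $k$-th excursion, a covariance computation yields
\[\big|\mathrm{Cov}_\Pp(L_k R_k\indic{H_k<h_n},\, L_{k+j}R_{k+j}\indic{H_{k+j}<h_n})\big| \leq C\,c^{j-1}\,\Ep[R_0^2\indic{H<h_n}]\]
for every $j\geq 1$. Summing the geometric series over $j$ and then over $k$ gives total variance $\leq C'n\,\Ep[R_0^2\indic{H<h_n}] = o(n^{2/\kappa})$ by Lemma~\ref{lem:renewalestimates} once more, concluding the plan. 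The technical heart is this last covariance bound: extracting the decay $c^{j-1}$ requires carefully separating the part of $L_{k+j}$ that is independent of the $k$-th excursion from the part that shares noise with $R_k$ through $r_k$ and $\xi_{k+1}$.
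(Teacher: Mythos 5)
Your overall architecture (Chebyshev, then $\bVarp(\tia)=\Ep[\Var_\omega(\tia)]+\Varp(E_\omega[\tia])$) is the same as the paper's, and your treatment of $\Varp(E_\omega[\tia])$ via the autoregressive structure $L_{k+1}=\ee^{-\xi_{k+1}}L_k+r_k$ and geometric decay of covariances is a genuine alternative to the paper's direct expansion of the double sum (equations \eqref{eqn:var_123}--onward), and is arguably cleaner. However, there is a real gap in the quenched term that sinks the proof at the endpoint $\kappa=1$.

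The claim that ``regrouping by excursion and invoking Lemma~\ref{lem:renewalestimates} with $\gamma=2>\kappa$'' gives $\Ep[\Var_\omega(\tau(e_1))\indic{H<h_n}]\leq C\ee^{(2-\kappa)h_n}$ is false when $\kappa=1$. After plugging in \eqref{eqn:alili2} and regrouping, the contribution with all spatial indices $j<0$ (backtracking to the left of the starting ladder point) factors, as in \eqref{eqn:j_neg}--\eqref{eqn:78648}, into
\[(1-E[\ee^{V(e_1)}])^{-1}\,\Ep\big[\ee^{H}(M'_1)^2M_2\big]\cdot \Ep\big[\ee^{H}M_2\indic{H<h_n}\big].\]
The crucial point is that the height $H$ in the first factor is that of a \emph{generic} excursion to the left, not the excursion $[0,e_1)$ on which the cutoff $\indic{H<h_n}$ acts. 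Thus Lemma~\ref{lem:renewalestimates} does not apply to that factor: $\Ep[\ee^{H}(M'_1)^2M_2]$ is finite precisely when $\kappa>1$ and is infinite for $\kappa=1$, because $\ee^H$ has no first moment at $\kappa=1$. Probabilistically, this is the contribution of the walk crossing a low excursion but backtracking into an earlier (possibly deep) valley, which the cutoff does not control. So $\Ep[\Var_\omega(\tia)]=\infty$ at $\kappa=1$, and Chebyshev as stated cannot be applied.

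The paper resolves this precisely by the machinery of Subsection~\ref{subsec:reduc_small}: first show via a first-moment argument (Lemma~\ref{lem:remontee_difficile}) that the total time spent backtracking past the last deep valley to the left is $o(n^{1/\kappa})$ in $\bPp$-probability, and then replace all high excursions by fresh independent low ones. In that modified environment every excursion is capped by $h_n$, so $R_-$ and $R$ have controllable moments (cf.~\eqref{eqn:e'r}), the offending factor becomes $(\Ep)'[\ee^H(M'_1)^2M_2\indic{H<h_n}]\leq Ch_n$, and the quenched-variance bound goes through. Your proposal makes no such reduction and therefore only proves the proposition for $1<\kappa<2$. To be complete you would need to either (i) incorporate the modified-environment trick before computing $\Ep[\Var_\omega(\tia)]$, or (ii) replace the variance argument for the quenched part by a truncation/first-moment argument that separately controls the backtracking excursions. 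As it stands, the critical case $\kappa=1$ --- which is the main novelty of the theorem compared to \cite{limitlaws} --- is not covered.

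As a secondary remark, your covariance bound $|\mathrm{Cov}|\leq Cc^{j-1}\Ep[R_0^2\indic{H<h_n}]$ is correct but slightly crude; carrying out the calculation carefully gives $|\mathrm{Cov}|\leq Cc^{j-1}\Ep[Y_0](\Ep[Y_0]+\Ep[Y_0 r_0])$ with $Y_0=R_0\indic{H<h_n}$, which via Lemma~\ref{lem:renewalestimates} ($\gamma=1$) is $O(c^{j-1})$ for $\kappa>1$ and $O(c^{j-1}h_n^2)$ for $\kappa=1$ --- both summing to $o(n^{2/\kappa})$. So that part of your plan does survive at $\kappa=1$; it is specifically the quenched piece that needs the fix.
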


\begin{remark*}
This limit also holds for $0<\kappa<1$ in a simple way: we have, in this case, using Lemma \ref{lem:renewalestimates}, 
\begin{align*}
\bEp[\tia]=n\Ep[E_\omega[\tau(e_1)]\indic{H\leq h_n}]
	&\leq nE[2M'_1M_2\ee^H\indic{H\leq h_n}]\notag\\
	&\leq C n\ee^{(1-\kappa)h_n}=o(n^{1/\kappa}),
\end{align*}
hence $n^{-1/\kappa}\tia$ and its expectation separately converge to 0 in probability.
\end{remark*}

By Chebychev inequality, this Proposition will come as a direct consequence of Lemma~\ref{lem:var_tia} bounding the variance of $\tia$. However, a specific caution is necessary in the case $\kappa=1$; indeed, the variance is infinite in this case, because of the rare but significant fluctuations originating from the time spent by the walk when it backtracks into deep valleys. Our proof in this case consists in proving first that we may neglect in probability (using a first-moment method) the time spent backtracking into deep valleys; and then that this brings us to the problem of the computation of the variance of $\tia$ in an environment where small excursions have been substituted for the high ones (thus removing the non-integrability problem). 

Subsection \ref{subsec:reduc_small} is dedicated to this reduction to an integrable setting, which is only involved in the case $\kappa=1$ of Proposition \ref{prop:tau_ia} and of the main theorem (but holds in greater generality), while Subsection \ref{subsec:var_tia} states and proves the bounds on the variance, both for the initial (for $\kappa>1$) and the modified environment, implying Proposition~\ref{prop:tau_ia}. 

\subsection{Reduction to small excursions (required for the case $\kappa=1$)}\label{subsec:reduc_small}

Let $h>0$. Let us denote by $d_-$ the right end of the first excursion on the left of 0 that is higher than $h$: 
\begin{equation*}
d_- \defeq \max\{e_k:\;k\leq 0, H_{k-1}\geq h\}. 
\end{equation*}
Remember $\tautx{d_-}(0,e_1)$ is the time spent on the left of $d_-$ before the walk reaches $e_1$. 

\begin{lemma}\label{lem:remontee_difficile}
There exists $C>0$, independent of $h$, such that
\begin{equation}\label{eqn:time_left_d}
\bEp[\tautx{d_-}(0,e_1)\indic{H\leq h}]\leq C \left\{
\begin{array}{cl}
	\ee^{-(2\kappa-1)h}&\mbox{if }\kappa<1\\
	h\ee^{- h}&\mbox{if }\kappa=1\\
	\ee^{-\kappa h}&\mbox{if }\kappa>1.
\end{array}\right.
\end{equation}
\end{lemma}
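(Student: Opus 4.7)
The plan is to decompose $E_\omega[\tautx{d_-}(0,e_1)]$ via the Markov chain formulas of Section~\ref{subsec:quench}, separating a $\Z_+$-factor (involving $M_2$ and $H$) from a $\Z_-$-factor (involving $d_-$ and an $R_-$-type sum). Since $\Pp$ only conditions the $\Z_-$-environment, the two factors are independent under $\Pp$; the first is bounded directly by Lemma~\ref{lem:renewalestimates}, and the second should contribute an additional $\ee^{-\kappa h}$. Combining these yields the three announced regimes.

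First, by the strong Markov property and the usual decomposition into a geometric number of i.i.d.~excursions into $(-\infty,d_-]$, Wald's identity gives $E_\omega[\tautx{d_-}(0,e_1)] = E_\omega[N]\cdot E_{d_-,\omega}[\tau(d_-+1)]$, where $N$ counts transitions from $d_-+1$ to $d_-$ before $\tau(e_1)$. Formula~\eqref{eqn:zeitouni_p} yields $E_\omega[N] = M_2\ee^{H-V(d_-)}$, while \eqref{eqn:zeitouni_e3} gives $E_{d_-,\omega}[\tau(d_-+1)]\leq 2\sum_{i\leq d_-}\ee^{V(d_-)-V(i)}$, so that $E_\omega[\tautx{d_-}(0,e_1)]\leq 2M_2\ee^H\sum_{i\leq d_-}\ee^{-V(i)}$. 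Since $(\omega_k)_{k\geq 1}$ and $(\omega_k)_{k\leq 0}$ remain independent under $\Pp$ (the conditioning touches only the latter), taking expectations yields
\[\bEp[\tautx{d_-}(0,e_1)\indic{H\leq h}]\leq 2\,E[M_2\ee^H\indic{H\leq h}]\cdot\Ep\Big[\sum_{i\leq d_-}\ee^{-V(i)}\Big].\]
The first factor is bounded by $C$, $Ch$, or $C\ee^{(1-\kappa)h}$ (when $\kappa>1$, $\kappa=1$, $\kappa<1$ respectively) via Lemma~\ref{lem:renewalestimates} with $(\alpha,\beta,\gamma)=(0,1,1)$. It thus suffices to show $\Ep[\sum_{i\leq d_-}\ee^{-V(i)}]\leq C\ee^{-\kappa h}$, uniformly in $h$.

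This is the main technical step. Write $d_-=e_{-k_-+1}$ with $k_-\defeq\inf\{j\geq 1:H_{-j}\geq h\}$ and partition $(-\infty,d_-]=\bigsqcup_{j\geq k_-}(e_{-j},e_{-j+1}]$. Set $W_j\defeq V(e_{-j+1})-V(e_{-j})\leq 0$ and, for the $j$-th left-excursion of length $L_j$, $M_3(j)\defeq\sum_{l=1}^{L_j}\ee^{-(V(e_{-j}+l)-V(e_{-j}))}$, so that
\[\sum_{i\leq d_-}\ee^{-V(i)}=\sum_{j\geq k_-}\ee^{\sum_{n\leq j}W_n}M_3(j).\]
By Lemma~\ref{lem:e_negatifs}, the left-excursions are i.i.d.~under $\Pp$ with the distribution of a generic excursion; hence $k_-$ is geometric of parameter $q=P(H\geq h)$, and conditional on $k_-=m$ the excursions with $j<m$, $j=m$, $j>m$ are mutually independent and distributed respectively as $\cdot\,|\,H<h$, $\cdot\,|\,H\geq h$, and unconditional. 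Writing $D\defeq-V(e_1)$ (so $W=-D$) and summing the two nested geometric series (the inner one in $j>m$ having ratio $E[\ee^{-D}]<1$, strict under hypotheses~(a)-(b)) yields
\[\Ep\Big[\sum_{i\leq d_-}\ee^{-V(i)}\Big]=\frac{E[\ee^{-D}M_3\indic{H\geq h}]+(1-E[\ee^{-D}])^{-1}E[\ee^{-D}\indic{H\geq h}]\,E[\ee^{-D}M_3]}{1-E[\ee^{-D}\indic{H<h}]},\]
whose denominator stays bounded below by a positive constant for large $h$.

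The numerator is estimated via the elementary identity $M_3=M'_1-1+\ee^D$, which yields $\ee^{-D}M_3\leq M'_1+1$. Lemma~\ref{lem:renewalestimates} with $(\alpha,\beta,\gamma)=(1,0,0)$ gives $E[M'_1\mid H\geq h]\leq C$; combined with \eqref{iglehartthm} this yields $E[\ee^{-D}M_3\indic{H\geq h}]\leq CP(H\geq h)\leq C\ee^{-\kappa h}$, and similarly $E[\ee^{-D}\indic{H\geq h}]\leq P(H\geq h)\leq C\ee^{-\kappa h}$ and $E[\ee^{-D}M_3]\leq E[M'_1]+1<\infty$. Together these bounds give $\Ep[\sum_{i\leq d_-}\ee^{-V(i)}]\leq C\ee^{-\kappa h}$, which upon combination with the first-factor bounds closes the proof. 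The main obstacle is arranging the excursion-wise independence correctly after the random reindexing by $k_-$, so that the nested geometric sums factor cleanly; once this is in place, the moment bounds from Lemma~\ref{lem:renewalestimates} do all the remaining work.
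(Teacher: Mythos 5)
Your proof is correct and follows essentially the same route as the paper: the Wald decomposition of $\tautx{d_-}(0,e_1)$ into excursions left of $d_-$, the factorization of the annealed expectation into $E[M_2\ee^H\indic{H\leq h}]$ (bounded by Lemma~\ref{lem:renewalestimates}) times a left-hand contribution of order $\ee^{-\kappa h}$ coming from the i.i.d.\ excursion structure of Lemma~\ref{lem:e_negatifs} and a geometric number of low excursions before the first high one -- the paper merely organizes the left factor as $\Ep[\ee^{-V(d_-)}]\cdot E^{\Lambda(h)}[\sum_{x\leq 0}\ee^{-V(x)}]$ rather than your single two-fold geometric series. Only cosmetic caveat: with the paper's definition of $M'_1$ (which includes all $k\leq 0$), your ``identity'' $M_3=M'_1-1+\ee^{D}$ is really the inequality $M_3\leq M'_1-1+\ee^{D}$, which is the direction you need, so nothing breaks.
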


%\paragraph*{Remark} The first inequality will not be used before Lemma \ref{lem:Hlarge}  in Section \ref{sec:propplougon}. 

\begin{proof}
Let us decompose $\tautx{d_-}(0,e_1)$ into the successive excursions to the left of $d_-$: 
\begin{equation*}
\tautx{d_-}(0,e_1)=\sum_{m=1}^N T_m,
\end{equation*}
where $N$ is the number of crossings from $d_-+1$ to $d_-$ before $\tau(e_1)$, and $T_m$ is the time for the walk to go from $d_-$ to $d_-+1$ on the $m$-th time. Under $P_\omega$, the times $T_m$, $m\geq 1$, are i.i.d.\ and independent of $N$ (i.e., more properly, the sequence $(T_m)_{1\leq m\leq N}$ can be prolonged to an infinite sequence with these properties). We have, using Markov property and then \eqref{eqn:zeitouni_p},
\begin{equation*}
E_\omega[N]=\frac{P_{0,\omega}(\tau(d_-)<\tau(e_1))}{P_{d_-+1,\omega}(\tau(e_1)<\tau(d_-))}=\sum_{0\leq x<e_1} \ee^{V(x)-V(d_-)}
\end{equation*}
and, from \eqref{eqn:zeitouni_e3},
\begin{equation*}
E_\omega[T_1]=E_{d_-,\omega}[\tau(d_-+1)]\leq 2\sum_{x\leq d_-}\ee^{-(V(x)-V(d_-))}. 
\end{equation*}
Therefore, by Wald identity and Lemma \ref{lem:e_negatifs}, 
\begin{align}
\bEp[\tautx{d_-}(0,e_1)\indic{H\leq h}]
	& = \Ep[E_\omega[N]E_\omega[T_1]\indic{H\leq h}]\notag\\
	& \hseq\leq 2 E\bigg[\sum_{0\leq x<e_1}\ee^{V(x)}\indic{H\leq h}\bigg]
		\Ep[\ee^{-V(d_-)}]
		E^{\Lambda(h)}\bigg[\sum_{x\leq 0}\ee^{-V(x)}\bigg],\label{eqn:7878}
\end{align}
where $\Lambda(h)\defeq\{\forall k\leq 0,\ V(k)\geq 0\}\cap\{H_{-1}\geq h\}$. The first expectation can be written as $E[M_2\ee^H \indic{H\leq h}]$. For the second one, note that $d_-=e_{-W}$, where $W$ is a geometric random variable of parameter $q\defeq P(H\geq h)$; and, conditional on $\{W=n\}$, the distribution of $(V(k))_{e_{-W}\leq k\leq 0}$ under $\Pp$ is the same as that of $(V(k))_{e_{-n}\leq k\leq 0}$ under $\Pp(\cdot|\text{for }k=0,\ldots,n-1, H_{-k}< h)$. Therefore, 
\begin{align*}
\Ep[\ee^{-V(d_-)}]
	& = \Ep\big[E[\ee^{-V(e_1)}|H<h]^W\big] = \frac{q}{1-(1-q)E[\ee^{V(e_1)}|H<h]},
\end{align*}
and $(1-q)E[\ee^{V(e_1)}|H<h]$ converges to $E[\ee^{V(e_1)}]<1$ when $h\to\infty$ (the inequality comes from assumption ({\it b})), hence this quantity is uniformly bounded from above by $c<1$ for large $h$. In addition, \eqref{iglehartthm} gives $q\sim C_I\ee^{-\kappa h}$ when $h\to\infty$, hence
\begin{equation*}
\Ep[\ee^{-V(d_-)}]\leq C\ee^{-\kappa h},
\end{equation*}
where $C$ is independent of $h$. Finally, let us consider the last term of \eqref{eqn:7878}. We have
\begin{align*}
&\hspace{-.5cm} E^{\Lambda(h)}\bigg[\sum_{x\leq 0}\ee^{-V(x)}\bigg]\\
	& = E\bigg[\sum_{e_{-1}<x\leq 0}\ee^{-V(x)}\bigg|H_{-1}\geq h\bigg]
		+\Ep\bigg[\sum_{x\leq e_{-1}}\ee^{-(V(x)-V(e_{-1}))}\bigg]E[\ee^{-V(e_{-1})}]\\
	& \leq E[M'_1|H\geq h]+\Ep[R_-]E[\ee^{V(e_1)}]
\end{align*}
hence, using Lemma \ref{lem:renewalestimates}, \eqref{eqn:r-} and $V(e_1)\leq 0$, this term is bounded by a constant. The statement of the lemma then follows from the application of Lemma \ref{lem:renewalestimates} to $E[M_2\ee^H\indic{H\leq h}]$. 
\end{proof}

The part of the inter-arrival time $\tia$ spent at backtracking in high excursions can be written as follows: 
\begin{equation*}
	\begin{split}
\widetilde{\tia}
	&\defeq \tautx{d_-}(0,b_1\wedge e_n)+\sum_{i=1}^{K_n}\tautx{d_i}(d_i,b_{i+1}\wedge e_n)\\
	&=\sum_{k=0}^{n-1} \tautx{d(e_k)}(e_k,e_{k+1})\indic{H_k<h_n},
	\end{split}
\end{equation*}
where, for $x\in\Z$,
\begin{equation*}
d(x)\defeq\max\{e_k:\;k\in\Z,\ e_k\leq x,\ H_{k-1}\geq h_n\}. 
\end{equation*}
In particular, $d(0)=d_-$ in the previous notation with $h=h_n$. 

Note that, under $\bPp$, because of Lemma \ref{lem:e_negatifs}, the terms of the above sum have same distribution as $\tautx{d(0)}(0,e_1)\indic{H< h_n}$, hence
\begin{equation*}
\bEp[\widetilde{\tia}]= n\bEp[\tautx{d(0)}(0,e_1)\indic{H< h_n}]. 
\end{equation*}
Thus, for $\bEp[\widetilde{\tia}]$ to be negligible with respect to $n^{1/\kappa}$, it suffices that the expectation on the right-hand side be negligible with respect to $n^{1/\kappa-1}$. In particular, for $\kappa=1$, it suffices that it converges to 0, which is readily seen from \eqref{eqn:time_left_d}. Thus, for $\kappa=1$, 
\begin{equation}  \label{eqn:lim_tia_tilde}
\frac{1}{n^{1/\kappa}}\bEp[\widetilde{\tia}]\limites{}{n}0, 
\end{equation}
hence in particular $n^{-1/\kappa}\widetilde{\tia}\to 0$ in probability under $\bPp$. Note that \eqref{eqn:lim_tia_tilde} actually holds for any $\kappa\geq 1$. 

Let us introduce the modified environment, where independent small excursions are substituted for the high excursions. In order to avoid obfuscating the redaction, we will only introduce little notation regarding this new environment. 

Let us enlarge the probability space in order to accommodate for a new family of independent excursions indexed by $\N^*\times\Z$ such that the excursion indexed by $(n,k)$ has same distribution as $(V(x))_{0\leq x\leq e_1}$ under $P(\cdot|H\leq h_n)$. Thus we are given, for every $n\in\N^*$, a countable family of independent excursions lower than $h_n$. For every $n$, we define the modified environment of height less than $h_n$ by replacing all the excursions of $V$ that are higher than $h_n$ by new independent ones that are lower than $h_n$. Because of Lemma \ref{lem:e_negatifs}, this construction is especially natural under $\Pp$, where it has stationarity properties. 

In the following, we will denote by $P'$ the law of the modified environment relative to the height $h_n$ guessed from the context (hence also a definition of $(\Pp)'$, for instance). 

\begin{remark*}\label{rem:e'r}
Repeating the proof done under $\Pp$ for $(\Pp)'$, we see that $R_-$ still has all finite moments in the modified environment, and that these moments are bounded uniformly in $n$. In particular, the bound for $\Ep[(M'_1)^\alpha (M_2)^\beta \ee^{\gamma H}\indic{H\leq h_n}]$ given in Lemma \ref{lem:renewalestimates} is unchanged for $(\Ep)'$ (writing $M'_1=R_-+\sum_{0\leq k<e_1}\ee^{-V(x)}$ and using $(a+b)^\alpha\leq 2^\alpha(a^\alpha+b^\alpha)$). On the other hand, 
\begin{align*}
E'[R]
	%&= E'\left[\sum_{i=0}^\infty \ee^{V(e_i)}\sum_{e_i\leq k<e_{i+1}}\ee^{V(k)-V(e_i)}\right]\\
	&= \sum_{i=0}^\infty E'[\ee^{V(e_i)}]E'\left[\sum_{e_i\leq k<e_{i+1}}\ee^{V(k)-V(e_i)}\right]\\
	&= \sum_{i=0}^\infty E[\ee^{V(e_1)}|H\leq h_n]^i E[M_2\ee^H|H\leq h_n],
\end{align*}
and $E[\ee^{V(e_1)}|H\leq h_n]\leq c$ for some $c<1$ independent of $n$ because this expectation is smaller than 1 for all $n$ and it converges toward $E[\ee^{V(e_1)}]<1$ as $n\to\infty$. Hence, by Lemma \ref{lem:renewalestimates}, 
\begin{equation}\label{eqn:e'r}
\text{if $\kappa=1$},\qquad E'[R] \leq C h_n 
\end{equation}
This is the only difference that will appear in the following computations. 
\end{remark*}

Assuming that $d(0)$ keeps being defined with respect to the usual heights, \eqref{eqn:time_left_d} (with $h=h_n$) is still true for the walk in the modified environment. Indeed, the change only affects the environment on the left of $d(0)$, hence the only difference in the proof involves the times $T_m$: in \eqref{eqn:7878}, one should substitute $(\Ep)'$ for $E^{\Lambda(h)}$, and this factor is uniformly bounded in both cases because of the above remark about $R_-$. 

We deduce that the time $\widetilde{\tia}'$, defined similarly to $\widetilde{\tia}$ except that the excursions on the left of the points $d(e_i)$ (i.e.\ the times similar to $T_m$ in the previous proof) are performed in the modified environment, still satisfies: for $\kappa=1$, 
\begin{equation} \label{eqn:lim_tia_tilde'}
\frac{1}{n^{1/\kappa}}\bEp[\widetilde{\tia}']\limites{}{n}0. 
\end{equation}

Note now that
\begin{equation*}
\tia'\defeq\tia-\widetilde{\tia}+\widetilde{\tia}'
\end{equation*}
is the time spent at crossing the (original) small excursions, in the environment where the high excursions have been replaced by new independent small excursions. Indeed, the high excursions are only involved in $\tia$ during the backtracking of the walk to the left of $d(e_i)$ for some $0\leq i< n$. Assembling \eqref{eqn:lim_tia_tilde} and \eqref{eqn:lim_tia_tilde'}, it is equivalent (for $\kappa=1$) to prove \eqref{eqn:limite_fluctu_tia} or 
\begin{equation*}
\frac{1}{n^{1/\kappa}}(\tia'-\bEp[\tia'])\limites{(p)}{n}0, 
\end{equation*}
and it is thus sufficient to prove 
\begin{equation*}
\bVarp(\tia')=o_n(n^{2/\kappa}). 
\end{equation*}

\subsection{Bounding the variance of $\tia$} \label{subsec:var_tia}

Because of the previous subsection, the proof of Proposition \ref{prop:tau_ia} will follow from the following Lemma. 

\begin{lemma}\label{lem:var_tia}
We have, for $1<\kappa<2$, 
\begin{equation*}
\bVarp(\tia)=o_n(n^{2/\kappa})
\end{equation*}
and, for $1\leq \kappa<2$, 
\begin{equation*}
\bVarp(\tia')=o_n(n^{2/\kappa}). 
\end{equation*}
\end{lemma}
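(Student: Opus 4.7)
The plan is to decompose the variance as $\bVarp(\tia) = \Ep[\Var_\omega(\tia)] + \Varp(E_\omega[\tia])$ and control each piece using the stationarity provided by Lemma~\ref{lem:e_negatifs} and the moment estimates from Lemma~\ref{lem:renewalestimates}. For the first summand, the strong Markov property at the successive times $\tau(e_k)$ decouples the contributions:
\[
\Var_\omega(\tia) = \sum_{k=0}^{n-1}\Var_{e_k,\omega}(\tau(e_{k+1}))\indic{H_k<h_n}.
\]
By Lemma~\ref{lem:e_negatifs}, these summands are identically distributed under $\Pp$, so it suffices to prove $\Ep[\Var_{0,\omega}(\tau(e_1))\indic{H<h_n}] = O(\ee^{(2-\kappa)h_n})$. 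Starting from \eqref{eqn:alili2} and separating the inner summation into the forward range $j\geq 0$ (where $\ee^{V(j)}\leq \ee^H$ and $\sum_{l\leq j}\ee^{-V(l)}\leq M'_1$) and the backtracking range $j<0$ (decomposed along the ladder epochs of the left environment so as to exploit the exponential decay of $\ee^{-V(e_{-m})}$), the relevant expectation is controlled via Lemma~\ref{lem:renewalestimates} with $\alpha=\beta=\gamma=2$, yielding the bound $C\ee^{(2-\kappa)h_n}$. Multiplying by $n$ and recalling that $\ee^{(2-\kappa)h_n}=n^{2/\kappa-1}(\log n)^{-(2-\kappa)}$ gives $\Ep[\Var_\omega(\tia)]=O(n^{2/\kappa}/(\log n)^{2-\kappa}) = o(n^{2/\kappa})$.

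For the second summand, set $f_k = E_{e_k,\omega}[\tau(e_{k+1})]\indic{H_k<h_n}$, so that $E_\omega[\tia]=\sum_k f_k$, and note that $(f_k)_{k\in\Z}$ is stationary under $\Pp$. For each $l\geq 1$, split $f_l = f_l^{(1)} + f_l^{(2)}$ depending on whether the inner summation index $i$ in \eqref{eqn:zeitouni_e} satisfies $i\geq e_1$ or $i<e_1$. Since $f_l^{(1)}$ depends only on the environment in $[e_1, e_{l+1}]$, which is independent of $f_0$ under $\Pp$, one has $\mathrm{Cov}(f_0, f_l^{(1)})=0$. For the other piece, use
\[
f_l^{(2)} \leq 2\, M'_1\cdot \ee^{V(e_1)}\cdot \ee^{V(e_l)-V(e_1)}\cdot M_2^{(l)}\ee^{H_l}\indic{H_l<h_n},
\]
where the three displayed groups depend respectively on the disjoint environment pieces $(-\infty, e_1]$, $[e_1, e_l]$ and $[e_l, e_{l+1}]$, hence are independent under $\Pp$. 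Using $\Ep[\ee^{V(e_l)-V(e_1)}] = c^{l-1}$ with $c = E[\ee^{V(e_1)}]<1$, and controlling the remaining factors via Lemma~\ref{lem:renewalestimates}, we obtain $|\mathrm{Cov}(f_0, f_l)| \leq Cc^{l-1}$. Therefore $\sum_{l=1}^{n-1}(n-l)|\mathrm{Cov}(f_0, f_l)| = O(n)$, and combined with $n\Varp(f_0)\leq Cn\ee^{(2-\kappa)h_n}=o(n^{2/\kappa})$ (applying Lemma~\ref{lem:renewalestimates} to $f_0\leq 2M'_1 M_2 \ee^H \indic{H<h_n}$), this gives $\Varp(E_\omega[\tia])=o(n^{2/\kappa})$. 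This completes the case $1<\kappa<2$.

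For the case $\kappa=1$ with $\tia'$, the same scheme runs in the modified environment $(\Pp)'$: the excursion independence (Lemma~\ref{lem:e_negatifs}) is preserved by construction, the bounds of Lemma~\ref{lem:renewalestimates} apply without change (cf.\ the remark following \eqref{eqn:e'r}), and the constant $c'=\Ep'[\ee^{V(e_1)}]$ remains bounded away from $1$ uniformly in $n$ since it converges to $E[\ee^{V(e_1)}]<1$. The borderline case $\gamma=\kappa=1$ of Lemma~\ref{lem:renewalestimates} produces an extra $h_n$ factor in several moments, but one checks that $n\Varp'(f_0)=O(n^2/\log n)$ and $\sum_{l}(n-l)|\mathrm{Cov}'(f_0, f_l)|=O(nh_n^2)$ are both $o(n^2) = o(n^{2/\kappa})$. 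The central technical difficulty is the quenched variance estimate in the first paragraph: a naive application of \eqref{eqn:alili2} produces terms involving $\sum_{j<0}\ee^{V(j)}$, which diverges under $\Pp$ because $V(j)\to+\infty$ almost surely as $j\to-\infty$. The resolution is to use the ladder decomposition of the left environment (Lemma~\ref{lem:e_negatifs}) to show that the exponential growth of $\ee^{V(j)}$ is exactly compensated by the decay of $(\sum_{l\leq j}\ee^{-V(l)})^2$, allowing Lemma~\ref{lem:renewalestimates} to be applied term by term.
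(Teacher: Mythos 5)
Your proof is correct, and the overall skeleton — the decomposition $\bVarp(\tia)=\Ep[\Var_\omega(\tia)]+\Varp(E_\omega[\tia])$, stationarity of excursions under $\Pp$ via Lemma~\ref{lem:e_negatifs}, and heavy reliance on Lemma~\ref{lem:renewalestimates} — matches the paper exactly. For the first summand $\Ep[\Var_\omega(\tia)]$ your argument is essentially the paper's verbatim (same split $j\geq 0$ vs.\ $j<0$ in \eqref{eqn:alili2}, same ladder decomposition on the negative side, same application of Lemma~\ref{lem:renewalestimates} with $\alpha=\beta=\gamma=2$).

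The genuine difference is in the treatment of $\Varp(E_\omega[\tia])$. The paper works at the level of the quadruple site indices $(i,j,k,l)$ arising from \eqref{eqn:zeitouni_e}, splitting by their relative order and then, inside each ordering, by which excursion contains the middle indices; the cross terms vanish because the excursions that are disjoint are independent, and the surviving ``same-excursion'' blocks are bounded directly. You instead lift the whole argument to the excursion level: setting $f_k=E_{e_k,\omega}[\tau(e_{k+1})]\indic{H_k<h_n}$ you have a stationary sequence and write $\Varp(\sum_k f_k)=n\Varp(f_0)+2\sum_{l\geq 1}(n-l)\mathrm{Cov}(f_0,f_l)$, then split $f_l=f_l^{(1)}+f_l^{(2)}$ by whether the inner summation index reaches below $e_1$. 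The orthogonality $\mathrm{Cov}(f_0,f_l^{(1)})=0$ (your $f_l^{(1)}$ depends only on excursions $1,\ldots,l$) plays the same structural role as the paper's vanishing cross-excursion terms, and the factorization $f_l^{(2)}\leq 2M'_1\ee^{V(e_1)}\cdot\ee^{V(e_l)-V(e_1)}\cdot M_2^{(l)}\ee^{H_l}\indic{H_l<h_n}$ into independent pieces yields the geometric bound $|\mathrm{Cov}(f_0,f_l)|\leq Cc^{l-1}$ with $c=E[\ee^{V(e_1)}]<1$, whose summability is automatic. This is cleaner and more modular than the paper's index-ordering bookkeeping, and as a bonus the factor $E[R]$ that the paper needs (finite only for $\kappa>1$, forcing the passage to the modified environment already in this sub-step) does not appear in your covariance bound — though of course the modified environment is still required for $\kappa=1$ because of the $j<0$ part of the first summand, as you correctly note. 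One small imprecision: the ``naive'' divergence you point to is not $\sum_{j<0}\ee^{V(j)}$ by itself but rather the joint sum $\sum_{j<0}\ee^{V(j)}\bigl(\sum_{l\leq j}\ee^{-V(l)}\bigr)^2$, and the compensation is not exact but only enough to yield the geometric series $\sum_{u\leq 0}E[\ee^{V(e_1)}]^{|u|}$; the wording ``exactly compensated'' overstates it slightly, though the intended argument (the paper's \eqref{eqn:78648}) is correct.
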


We recall that the second bound is only introduced to settle the case $\kappa=1$; it would suffice for $1<\kappa<2$ as well, but introduces unnecessary complication. The computations being very close for $\tia$ and $\tia'$, we will write below the proof for $\tia$ and indicate line by line where changes happen for $\tia'$. Let us stress that, when dealing with $\tia'$, all the indicator functions $\indic{H_\cdot\leq h_n}$ (which define the small valleys) would refer to the original heights, while all the potentials $V(\cdot)$ appearing along the computation (which come from quenched expectations of times spent by the walk) would refer to the modified environment. 

Since we have
\begin{equation*}
\bVarp(\tia)=\Ep[\Var_\omega(\tia)]+\Varp(E_\omega[\tia]), 
\end{equation*}
it suffices to prove the following two results:
\begin{align}
\Ep[\Var_\omega(\tia)]&=o_n(n^{2/\kappa}) \label{eqn:lim_e_var}\\
\Varp(E_\omega[\tia]) &=o_n(n^{2/\kappa}) \label{eqn:lim_var_e}.
\end{align}

\subsubsection{Proof of \eqref{eqn:lim_e_var}}

We have 
\begin{equation}\label{eqn:tau_ia_excursions}
\tau_{IA}=\sum_{p=0}^{n-1} \tau(e_p,e_{p+1})\indic{H_p< h_n}
\end{equation}
and by Markov property, the above times are independent under $P_{o,\omega}$. Hence 
\begin{equation*}
\Var_\omega(\tia)=\sum_{p=0}^{n-1}\Var_\omega(\tau(e_p,e_{p+1}))\indic{H_p< h_n}.
\end{equation*}
Under $\Pp$, the distribution of the environment seen from $e_p$ does not depend on $p$, hence 
\begin{equation} \label{eq:var_1}
\Ep[\Var_\omega(\tia)]=n \Ep[\Var_\omega(\tau(e_1))\indic{H< h_n}].
\end{equation}

We use Formula \eqref{eqn:alili2}: 
\begin{equation} \label{eq:var_zeitouni_1}
\Var_\omega(\tau(e_1))\indic{H< h_n}\leq 16\sum_{l'\leq l\leq j\leq k\leq e_1,\ 0\leq k} \ee^{{V}(k)+{V}(j)-{V}(l)-{V}(l')}\indic{H< h_n}.
\end{equation}
Let us first consider the part of the sum where $j\geq 0$. By noting that the indices satisfy $l'\leq j$ and $l\leq k$, this part is seen to be less than $(M'_1 M_2\ee^H)^2\indic{H<h_n}$. Lemma \ref{lem:renewalestimates} shows that its expectation is less than $C\ee^{(2-\kappa)h_n}$. \emph{For $\tia'$: The same holds, because of the remark p.~\pageref{rem:e'r}. }

It remains to deal with the indices $j<0$. This part rewrites as
\begin{equation}\label{eqn:j_neg}
\sum_{l',l\leq j< 0}\ee^{V(j)-V(l)-V(l')}\cdot\sum_{0\leq k<e_1}\ee^{V(k)}\indic{H<h_n}. 
\end{equation}
Since $V_{|\Z_+}$ and $V_{|\Z_-}$ are independent under $P$, so are the two above factors. The second one equals $\ee^H M_2\indic{H<h_n}$. Let us split the first one according to the excursion $[e_{u-1},e_u)$ containing $j$; it becomes 
\begin{equation}\label{eqn:78648}
\sum_{u\leq 0}\ee^{-V(e_{u-1})}\sum_{e_{u-1}\leq j<e_u}\ee^{V(j)-V(e_{u-1})}\left(\sum_{l\leq j} \ee^{-(V(l)-V(e_{u-1}))}\right)^2.
\end{equation}
We have $V(e_{u-1})\geq V(e_u)$ and, under $\Pp$, $V(e_u)$ is independent of $(V(e_u+k)-V(e_{u}))_{k\leq 0}$ and thus of $(V(e_{u-1}+k)-V(e_{u-1}))_{k\leq e_{u}-e_{u-1}}$, which has same distribution as $(V(k))_{k\leq e_1}$. Therefore, the expectation of \eqref{eqn:78648} with respect to $\Pp$ is less than
\begin{align*}
&\sum_{u\leq 0} \Ep[\ee^{-V(e_u)}]\Ep\left[\sum_{0\leq j<e_1}\ee^{V(j)}\left(\sum_{l\leq j}\ee^{-V(l)}\right)^2\right]\\
&\hseq\leq (1-E[\ee^{V(e_1)}])^{-1}\Ep[\ee^H(M'_1)^2M_2].
\end{align*}
Thus the expectation of \eqref{eqn:j_neg} with respect to $\Pp$ is bounded by 
\begin{equation*}
(1-E[\ee^{V(e_1)}])^{-1}\Ep[\ee^H(M'_1)^2M_2]\Ep[\ee^H M_2\indic{H<h_n}]. 
\end{equation*}
From Lemma \ref{lem:renewalestimates}, we conclude that this term is less than a constant if $\kappa>1$. The part corresponding to $j\geq 0$ therefore dominates; this finishes the proof of \eqref{eqn:lim_e_var}. \emph{For $\tia'$: The first factor is $(1-E[\ee^{V(e_1)}|H<h_n])^{-1}$, which is uniformly bounded because it converges to $(1-E[\ee^{V(e_1)}])^{-1}<\infty$ and, using Lemma \ref{lem:renewalestimates}, the two other factors are each bounded by a constant if $\kappa>1$ and by $C h_n$ if $\kappa=1$ (cf.~again the remark p.~\pageref{rem:e'r}). Thus, the part corresponding to $j\geq 0$ still dominates in this case.}

We have proved $\Ep[\Var_\omega(\tia))]\leq C n\ee^{(2-\kappa)h_n}$. Since $n\ee^{(2-\kappa)h_n}=\frac{n^{2/\kappa}}{(\log n)^{2-\kappa}}$, this concludes.

\subsubsection{Proof of \eqref{eqn:lim_var_e}}

%R\'e\'ecrire sans r\'eflexion en 0 : commencer par \'eliminer les indices n\'egatifs (l'esp\'erance du carr\'e suffit pour majorer cette partie). 
From equation~\eqref{eqn:tau_ia_excursions} we deduce
\begin{equation*}
E_\omega[\tia]=\sum_{p=0}^{n-1} E_\omega[\tau(e_p,e_{p+1})]\indic{H_p< h_n}
\end{equation*}
hence, using \eqref{eqn:zeitouni_e}, 
\begin{equation}\label{eqn:var_123}
\Varp(E_\omega[\tia])
	= \sum_{i\leq j,\ k\leq l}(\Ep[A_{ij}A_{kl}]-\Ep[A_{ij}]\Ep[A_{kl}]),
\end{equation}
where $A_{ij}\defeq\alpha_{ij}\ee^{V(j)-V(i)}\indic{0\leq j<e_n,\ H(j)<h_n}$ for any indices $i<j$, and $H(j)=H_q$ when $e_q\leq j<e_{q+1}$. 

Let us split this sum according to the relative order of $i,j,k,l$ and bound each term separately. Note that, up to multiplication by $2$, we may assume $j\leq l$, hence we only have to consider $i\leq j\leq k\leq l$ and $i,k\leq j\leq l$ (either $i\leq k$ or $k\leq i$). 

$\bullet$ $i\leq j\leq k\leq l$. Let us split again according to the excursion containing $j$. The summand of~\eqref{eqn:var_123} equals
\begin{equation}\label{eqn:split_excursion}
\sum_{q=0}^{n-1}(\Ep[A_{ij}\indic{e_q\leq j<e_{q+1}}A_{kl}]-\Ep[A_{ij}\indic{e_q\leq j<e_{q+1}}]\Ep[A_{kl}]).
\end{equation}
In addition, we write $A_{kl}=A_{kl}\indic{k\geq e_{q+1}}+A_{kl}\indic{e_q\leq k<e_{q+1}}$ in both terms in order to split according to whether $j$ and $k$ lie in the same excursion. 

Let us consider the case when $j$ and $k$ are in different excursions. Because of Markov property at time $e_{q+1}$ and of the stationarity of the distribution of the environment, we have, for any $i\leq j\leq k\leq l$,
\begin{equation*}
\Ep[A_{ij}\indic{e_q\leq j<e_{q+1}}A_{kl}\indic{e_{q+1}\leq k}]
	= \Ep[A_{ij}\indic{e_q\leq j<e_{q+1}}]\Ep[A_{kl}\indic{e_{q+1}\leq k}], 
\end{equation*}
hence these terms do not contribute to the sum \eqref{eqn:split_excursion}. \emph{The same holds for $\tia'$.}

For the remainder of the sum, we will only need to bound the ``expectation of the square'' part of the variance, i.e.~the terms coming from $\Ep[A_{ij}A_{kl}]$. %\emph{For $\tia'$, we may thus drop the conditions $H(j)<h_n, H(k)<h_n$ in the sum, which deletes all the references to the original heights}

Let us turn to the case when $j$ and $k$ lie in the same excursion $[e_q,e_{q+1})$. We have (remember $\alpha_{ij}\leq 2$)
\begin{align*}
 & \hseq \sum_{i\leq j\leq k\leq l}\sum_{q=0}^{n-1} \Ep\left[A_{ij}A_{kl} \indic{e_q\leq j\leq k<e_{q+1}}\right]\\
	& \leq 4\sum_{q=0}^{n-1}\Ep\left[\sum_{i\leq j\leq k\leq l}\ee^{V(j)-V(i)}\indic{e_q\leq j\leq k<e_{q+1}}\ee^{V(l)-V(k)}\indic{H_q<h_n}\right].
% 	& \leq & \sum_{q=0}^{n-1}\bE\left[\sum_{i\leq j} \ee^{V(j)-V(i)}\indic{e_q\leq j<e_{q+1}}\sum_{j\leq k<e_{q+1}}\sum_{l\geq k} \ee^{V(l)-V(k)}\right].
\end{align*}
Because of Lemma~\ref{lem:e_negatifs}, the last expectation, which involves a function of $(V(e_q+l)-V(e_q))_{l\in\Z}$, does not depend on $q$. Thus it equals 
\begin{equation*}
	\begin{split}
&\hseq 4n\sum_{i\leq j\leq k\leq l}\Ep[\ee^{V(j)-V(i)}\indic{0\leq j\leq k<e_1}\ee^{V(l)-V(k)}\indic{H<h_n}]\\
& \leq 4n \Ep\left[\sum_{i\leq j\leq e_1,\ 0\leq j}\ee^{V(j)-V(i)}\sum_{0\leq k\leq l,\ k< e_1}\ee^{V(l)-V(k)}\indic{H<h_n}\right].
	\end{split}
\end{equation*}
Splitting according to whether $l<e_1$ or $l\geq e_1$, the variable in the last expectation is bounded by $(M'_1M_2\ee^{H})^2\indic{H<h_n}+(M'_1M_2\ee^H)(M'_1\sum_{l\geq e_1}\ee^{V(l)})\indic{H<h_n}$. Note that $\sum_{l\geq e_1}\ee^{V(l)}\leq\sum_{l\geq e_1}\ee^{V(l)-V(e_1)}$, which has same distribution as $R$ and is independent of $M'_1,M_2,H$. The above bound thus becomes 
\begin{equation*}
4n(\Ep[(M'_1)^2(M_2)^2\ee^{2H}\indic{H<h_n}]+\Ep[(M'_1)^2M_2\ee^H\indic{H<h_n}]E[R]). 
\end{equation*}
From Lemma \ref{lem:renewalestimates}, this is less than $4n(C\ee^{(2-\kappa)h_n}+C)\leq C'n\ee^{(2-\kappa)h_n}$. \emph{For $\tia'$, this is unchanged when $\kappa>1$; and, if $\kappa=1$, the above expression is bounded by $4n(C\ee^{(2-\kappa)h_n}+C(h_n)^2)$, cf.~\eqref{eqn:e'r}, hence the bound remains the same. }

$\bullet$ $i,k\leq j\leq l$ (either $i\leq k$ or $k\leq i$). We have 
\begin{equation*}
	\begin{split}
&  \hseq \sum_{i, k\leq j\leq l}\Ep[A_{ij}A_{kl}]\\
	%& \leq & 8\Ep\left[\sum_{i\leq k\leq j\leq l}\ee^{V(l)-V(k)+V(j)-V(i)}\indic{0\leq l<e_n,\ H(l)<h_n}\right]\\
	& \leq  8\sum_{p=0}^{n-1} \Ep\left[\sum_{i\leq k\leq j\leq l}\ee^{V(l)-V(k)+V(j)-V(i)}\indic{e_p\leq l<e_{p+1}}\indic{H_p<h_n}\right]. 
	\end{split}
%\\	& \leq & \bE\left[\sum_{0\leq i<e_n}\sum_{k\geq i} \ee^{V(k)-V(i)}\sum_{j\geq k}\indic{H(j)<h}\ee^{2(V(j)-V(k))}\sum_{l=j}^\infty\ee^{V(l)-V(j)}\right]
\end{equation*}
Using Lemma~\ref{lem:e_negatifs} we see that the above expectation, which involves a function of $(V(e_p+l)-V(e_p))_{l\in\Z}$, does not depend on $p$. Therefore, it equals 
\begin{equation*}
8 n \Ep\left[\sum_{i\leq k\leq j\leq l\leq e_1,\ l\geq 0}\ee^{V(l)+V(j)-V(k)-V(i)}\indic{H<h_n}\right]. 
\end{equation*}
The quantity in the expectation matches exactly the formula in (\ref {eq:var_zeitouni_1}) that was used as a bound for $\Var_\omega(\tau(e_1))\indic{H< h_n}$ (with different names for the indices: $(i,k,j,l)$ becomes $(l',l,j,k)$). Thus, it follows from the proof of \eqref{eqn:lim_e_var} that 
\begin{equation*}
\sum_{i, k\leq j\leq l}\Ep[A_{ij}A_{kl}] \leq C n \ee^{(2-\kappa)h_n}=o_n(n^{2/\kappa}). 
\end{equation*}

We have obtained the expected upper bound for each of the orderings, hence the lemma. 

\subsection{A subsequent Lemma}

The previous proofs of \eqref{eqn:lim_e_var} and \eqref{eqn:lim_var_e} entail the following bound for the crossing time of one low excursion: 
\begin{lemma}\label{lem:tau_e_1}
We have, for $1<\kappa<2$: 
\begin{equation*}
\Ep[E_\omega[\tau(e_1)^2]\indic{H<h}]\leq C \ee^{(2-\kappa)h}, 
\end{equation*}
and similarly for $(\Ep)'$ when $1\leq \kappa<2$. 
\end{lemma}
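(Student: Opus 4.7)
The approach I would take is the standard decomposition
\[
E_\omega[\tau(e_1)^2] = \Var_\omega(\tau(e_1)) + (E_\omega[\tau(e_1)])^2,
\]
together with the observation that both summands, after multiplication by $\indic{H<h}$ and integration against $\Pp$, have already been estimated (inside the proofs of \eqref{eqn:lim_e_var} and \eqref{eqn:lim_var_e}) with bound $C\ee^{(2-\kappa)h}$. The lemma thus amounts to extracting per-excursion bounds from estimates that were previously applied over $n$ excursions.

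For the first summand, I would simply quote the computation of $\Ep[\Var_\omega(\tau(e_1))\indic{H<h_n}]$ performed at \eqref{eq:var_1}--\eqref{eqn:j_neg}, after dropping the global factor $n$. There, the part of the decomposition \eqref{eqn:alili2} corresponding to $j\geq 0$ yielded $C\ee^{(2-\kappa)h}$ via Lemma \ref{lem:renewalestimates}, while the $j<0$ part yielded only a constant when $\kappa>1$. For $(\Ep)'$ with $\kappa=1$, the Remark following the statement of Lemma \ref{lem:renewalestimates} (cf.\ also \eqref{eqn:e'r}) ensures that the $j<0$ contribution is at most $C(h_n)^2$, still dominated by $\ee^{(2-\kappa)h_n}$.

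For the second summand, I would apply the quenched bound \eqref{eqn:zeitouni_e2}: since $\sum_{0\leq j<e_1}\ee^{V(j)}=M_2\ee^H$ and $\sum_{i\leq j}\ee^{-V(i)}\leq M'_1$ whenever $j<e_1$, we obtain $E_\omega[\tau(e_1)]\leq 2M'_1 M_2 \ee^H$, hence
\[
\Ep\bigl[(E_\omega[\tau(e_1)])^2\indic{H<h}\bigr]\leq 4\,\Ep\bigl[(M'_1)^2(M_2)^2\ee^{2H}\indic{H<h}\bigr].
\]
Applying Lemma \ref{lem:renewalestimates} with $\alpha=\beta=2$ and $\gamma=2>\kappa$ gives directly the bound $C\ee^{(2-\kappa)h}$. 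The same argument carries over to $(\Ep)'$ by virtue of the Remark guaranteeing that Lemma \ref{lem:renewalestimates} remains valid (with the same constants, up to uniformity in $n$) in the modified environment.

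I do not anticipate any substantial obstacle: the proof is essentially bookkeeping, isolating the single-excursion content of the already-performed estimates. The only point requiring mild care is the case $\kappa=1$ under $(\Ep)'$, where the $j<0$ portion of the variance estimate must be controlled via the uniform moment bounds available in the modified environment; but this control is already embedded in the proof of \eqref{eqn:lim_e_var} and can be cited verbatim.
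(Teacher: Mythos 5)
Your decomposition $E_\omega[\tau(e_1)^2] = \Var_\omega(\tau(e_1)) + (E_\omega[\tau(e_1)])^2$ and treatment of the variance term match the paper's proof exactly: both quote the $n$-free content of the proof of \eqref{eqn:lim_e_var} and apply Lemma \ref{lem:renewalestimates}, with \eqref{eqn:e'r} handling the $(\Ep)'$, $\kappa=1$ case. For the second summand, however, you take a genuinely shorter route. The paper reuses the proof of \eqref{eqn:lim_var_e} with $n=1$, and then argues that the variance computation there actually controls the \emph{second moment} of $E_\omega[\tau(e_1)]$ because the subtracted squared-expectation terms only involve indices in different excursions, which cannot occur when $n=1$; this reasoning still requires the case analysis over orderings of $i,j,k,l$. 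You instead bypass this entirely by using the pointwise quenched bound $E_\omega[\tau(e_1)]\leq 2\sum_{0\leq j<e_1}\sum_{i\leq j}\ee^{V(j)-V(i)}\leq 2M'_1 M_2\ee^H$ (which the paper itself already uses, e.g.\ in the $0<\kappa<1$ part of the proof of Lemma \ref{lem:Hlarge}), squaring, and invoking Lemma \ref{lem:renewalestimates} with $\gamma=2>\kappa$. Both arguments are correct and yield the same bound $C\ee^{(2-\kappa)h}$; yours is more elementary and self-contained, while the paper's is terser because it recycles an estimate already written out. Your appeal to the Remark following Lemma \ref{lem:renewalestimates} for the $(\Ep)'$ version is the right justification.
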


\begin{proof}
We have $E_\omega[\tau(e_1)^2]=\Var_\omega(\tau(e_1))+E_\omega[\tau(e_1)]^2$. Equation~\eqref{eq:var_1} and the remainder of the proof of \eqref{eqn:lim_e_var} give: 
\begin{equation*}
\Ep[\Var_\omega(\tau(e_1))\indic{H< h}]\leq C\ee^{(2-\kappa)h}. 
\end{equation*}
In order to see that the proof of \eqref{eqn:lim_var_e} implies the remainding bound: 
\begin{equation*}
\Ep[E_\omega[\tau(e_1)]^2\indic{H< h}]\leq C\ee^{(2-\kappa)h}, 
\end{equation*}
it suffices to take $n=1$ in the proof (except of course in ``$h_n$'') and to notice that, although our proof gave a bound for the \emph{variance} of $E_\omega[\tau(e_1)]$, we actually only needed to substract the ``squared expectation''-terms (cf.\ \eqref{eqn:var_123}) corresponding to indices lying in different excursions\ldots\ a situation which doesn't occur when $n=1$. Thus our proof in fact gives (in this case only) a bound for the ``expectation of the square'' of $E_\omega[\tau(e_1)]$. 
\end{proof}

\section{A general estimate for the occupation time of a deep valley} \label{sec:propplougon}

In this section we establish a precise annealed estimate for the tail distribution of the time spent by the particle to cross the first positive excursion
of the potential above its past infimum. Since we shall use this result to estimate the occupation time of deep valleys previously introduced, 
it is relevant to condition the potential to be nonnegative on $\Z_-.$ The main result of this section is the following.

\begin{proposition} 
 \label{prop:occupationdeep}
 The tail distribution of the hitting time of the first negative record $e_1$ satisfies
\begin{equation*}
 t^{\kappa} \,  \bPp \left(\tau(e_1) \ge t \right)  \longrightarrow C_T, \qquad t \to \infty,
  \end{equation*}
  where the constant $C_T$ is given by
  \begin{equation}\label{eqn:c_t=}
C_T\defeq 2^\kappa \, \Gamma(\kappa+1) C_U.
  \end{equation}
  \end{proposition}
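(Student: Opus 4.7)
The plan is to combine a quenched concentration statement with an annealed tail analysis. When the excursion is tall ($H$ large), $\tau(e_1)$ is, under $P_{0,\omega}$, approximately exponential with mean $E_{0,\omega}[\tau(e_1)]$; in turn, this mean is sharply asymptotic to $2Z = 2M_1 M_2\ee^H$, whose annealed tail is $C_U z^{-\kappa}$ by Lemma~\ref{lem:tail_z}. Averaging the exponential approximation against the tail of $Z$ produces precisely the Gamma factor and the constant $C_T$.

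For the quenched step I would decompose $\tau(e_1)$ via the strong Markov property at $0$: with $p_\omega \defeq P_{0,\omega}(\tau(e_1)<\tau^+(0))$, write $\tau(e_1) = \sum_{i=1}^{N-1}\xi_i + \xi^*$ where $N$ is $\mathrm{Geom}(p_\omega)$ under $P_{0,\omega}$, the $\xi_i$ are i.i.d.~failed-excursion durations, and $\xi^*$ is the successful one. Formulas \eqref{eqn:zeitouni_p} and \eqref{eqn:zeitouni_p3} give $p_\omega^{-1}$ of order $M'_1 M_2\ee^H$, while Lemma~\ref{lem:renewalestimates} controls the moments of the $\xi_i$ and shows $E_\omega[\xi_i]\ll p_\omega^{-1}$ for $H$ large. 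A Laplace-transform argument based on the factorization
\[
E_{0,\omega}[\ee^{-\lambda \tau(e_1)}] = \frac{E_{0,\omega}[\ee^{-\lambda\tau(e_1)}\indic{\tau(e_1)<\tau^+(0)}]}{1-E_{0,\omega}[\ee^{-\lambda\tau^+(0)}\indic{\tau^+(0)<\tau(e_1)}]}
\]
then yields, uniformly for $\lambda E_{0,\omega}[\tau(e_1)]$ bounded and $H$ large, $E_{0,\omega}[\ee^{-\lambda\tau(e_1)/E_\omega[\tau(e_1)]}] = (1+\lambda)^{-1}(1+o_H(1))$, hence
\[
P_{0,\omega}(\tau(e_1)>t) = \ee^{-t/E_{0,\omega}[\tau(e_1)]}\bigl(1+o_H(1)\bigr).
\]
In parallel, splitting the double sum \eqref{eqn:zeitouni_e} around the top $j=T_H$ of the barrier shows the dominant contribution is $j=T_H$: the inner sum gives $\sum_{i\leq T_H}\ee^{-V(i)} = M_1(1+o_H(1))$, while extending the outer sum $\sum_{0\leq j<e_1}\ee^{V(j)} = M_2\ee^H$ yields $E_{0,\omega}[\tau(e_1)] = 2Z(1+o_H(1))$, with the crude upper bound $E_{0,\omega}[\tau(e_1)]\leq 2M'_1 M_2\ee^H$ from \eqref{eqn:zeitouni_e2} available everywhere.

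The annealed reduction is then straightforward: for any $h_t\to\infty$ with $h_t=o(\log t)$,
\[
\bPp(\tau(e_1)\geq t) \;=\; (1+o(1))\,\Ep[\ee^{-t/(2Z)}\indic{H>h_t}] + O\bigl(\ee^{(2-\kappa)h_t}/t^{2}\bigr),
\]
the remainder term coming from Chebyshev applied to Lemma~\ref{lem:tau_e_1} (a first-moment bound suffices for $\kappa<1$). An integration-by-parts or direct change of variable $u=t/(2z)$, together with $\Pp(Z>z)\sim C_U z^{-\kappa}$, finally yields
\[
\Ep[\ee^{-t/(2Z)}] \;=\; \int_0^\infty \Pp(Z>t/(2u))\,\ee^{-u}\d u \;\sim\; 2^\kappa C_U t^{-\kappa}\int_0^\infty u^\kappa\ee^{-u}\d u \;=\; C_T\,t^{-\kappa}.
\]

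The main obstacle is the uniform quenched exponential concentration: one must control the Laplace transform of the failed-excursion duration in the regime $\lambda\asymp p_\omega/E_\omega[\xi]$ with an $o_H(1)$ error uniform in the environment. Concretely this reduces to showing that typical failed excursions have length negligible compared to $p_\omega^{-1}$, which follows from a second-moment bound of the kind provided by Lemma~\ref{lem:tau_e_1}. Everything else (integrability of $Z$'s tail, the two-sided comparison between $E_{0,\omega}[\tau(e_1)]$ and $2Z$) follows from the renewal estimates already assembled in Section~\ref{sec:preliminaries}.
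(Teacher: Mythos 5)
Your high-level plan matches the paper's: restrict to the event $\{H\geq h_t\}$ via a second-moment bound, decompose $\tau(e_1)$ as a geometric number of failed excursions plus one success, establish quenched concentration showing $P_\omega(\tau(e_1)>t)\approx\ee^{-t/(2Z)}$, and integrate against the tail of $Z$ from Lemma~\ref{lem:tail_z}; the change of variable $y=t/(2z)$ then produces the factor $2^\kappa\Gamma(\kappa+1)$. Where the paper sandwiches $P_\omega(\tau>t)$ between $p^{K_-}$ and $p^{K_+}$ by Chebyshev, you propose a single Laplace-transform identity; this is a reasonable reorganization of the same idea.

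However, there is a genuine gap in the piece you flag as the ``main obstacle.'' You claim the uniform quenched concentration reduces to a second-moment bound ``of the kind provided by Lemma~\ref{lem:tau_e_1}'' and that Lemma~\ref{lem:renewalestimates} controls the moments of the $\xi_i$. Neither is true: Lemma~\ref{lem:tau_e_1} is an \emph{annealed} bound on $\Ep[E_\omega[\tau(e_1)^2]\indic{H<h}]$, used precisely in the complementary regime $\{H<h_t\}$ to dismiss small excursions (Lemma~\ref{lem:Hlarge}); it says nothing about the quenched variance of a failed excursion when $H$ is large. Lemma~\ref{lem:renewalestimates} controls annealed moments of $M_1',M_2,\ee^H$, not of $F$. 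To make the error term $o_H(1)$ work, one needs pointwise (quenched) control of $\Var_\omega(F)$ and $E_\omega[G]$ on a good-environment event $\Omega_t$ of probability $1-o(1)$ given $H\geq h_t$; this is the content of the paper's Lemmas~\ref{lem:goodenv}, \ref{lemmafailurebound}, \ref{lemmafsuccessbound}, and it relies on the $h$-process potentials $\widehat{V}$ and $\bar{V}$ (bounding $R^+$, $R^-$, $|\widehat M_1-M_1|$ and $E_\omega[G]$ by $C(\log t)^4 t^\alpha$ with $\alpha<\min(1,2-\kappa)$). You do not introduce the good-environment conditioning or the $h$-processes, so your $o_H(1)$ is not justified as it stands, and it cannot be ``uniform in the environment'' — only uniform on $\Omega_t$. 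Also a small imprecision: $p_\omega^{-1}$ is of order $M_2\ee^H$, not $M'_1M_2\ee^H$; the latter is the order of $E_\omega[\tau(e_1)]$.
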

The idea of the proof is the following. We show first that the the height of the first excursion has to be larger than a function $h_t$  (of order $\log t$). Secondly, we prove that conditional on  $H \ge h_t$ the environment has locally ``good'' properties. Finally, we decompose the passage from $0$ to
$e_1$ into the sum of a random geometrically distributed number of unsuccessful attempts to reach $e_1$ from $0$ (i.e. excursions of the particle  from $0$ to $0$ which do not hit $e_1$), followed by a successful attempt. This enables us to prove that  $\tau(e_1)$ behaves as an exponentially distributed random variable with mean $2Z$ where $Z$ is defined by $Z\defeq M_1 M_2 \, \ee^H$  and whose tail distribution is studied in \cite{renewal} and recalled in Lemma \ref{lem:tail_z}. 

In this proof, we denote $\tau(e_1)$ by $\tau$. 

\subsection{The height of the first excursion has to be large}

Let the critical height $h_t$ be a function of $t$ defined by
\begin{equation}
\label{h_T:def} 
h_t\defeq \log t - \log \log t, \qquad t \ge \ee^{\ee}.
\end{equation}

\begin{lemma} \label{lem:Hlarge} We have
\begin{equation*} \label{eq:estimatevalley1}
\bPp (\tau(e_1)>t \,; \, H \le h_t)=o(t^{-\kappa}), \qquad t \to \infty.
\end{equation*}
\end{lemma}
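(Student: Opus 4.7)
The plan is a straightforward Chebyshev estimate exploiting the moment bounds already established, the key point being that the critical height $h_t$ is chosen slightly below $\log t$ (the $-\log\log t$ correction) precisely so that a moment bound with exponent $\ee^{(2-\kappa)h_t}$ gives $o(t^{-\kappa})$ rather than $O(t^{-\kappa})$.

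First, in the main regime $1<\kappa<2$, I would apply Markov's inequality to $\tau(e_1)^2$ under $\bPp$, integrate out the walk, and use Lemma~\ref{lem:tau_e_1}:
\[
\bPp(\tau(e_1)>t,\ H\le h_t)\ \le\ \frac{1}{t^2}\,\Ep[E_\omega[\tau(e_1)^2]\indic{H\le h_t}]\ \le\ \frac{C\,\ee^{(2-\kappa)h_t}}{t^2}.
\]
With $h_t=\log t-\log\log t$, this is exactly $C t^{-\kappa}(\log t)^{-(2-\kappa)}=o(t^{-\kappa})$.

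For $0<\kappa<1$ (included by the general remark in the paper), a first-moment estimate suffices: using $E_\omega[\tau(e_1)]\le 2 M_1'M_2\ee^{H}$ (from \eqref{eqn:zeitouni_e2}) together with the $\gamma<\kappa$ range of Lemma~\ref{lem:renewalestimates} would be useless, but the $\gamma=1>\kappa$ case of the same lemma yields $\bEp[\tau(e_1)\indic{H\le h_t}]\le C\ee^{(1-\kappa)h_t}$, and a direct Markov bound then produces $C t^{-\kappa}(\log t)^{-(1-\kappa)}=o(t^{-\kappa})$.

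The delicate point, and what I expect to be the main obstacle, is $\kappa=1$: the first-moment approach only yields $O(h_t/t)=O((\log t)/t)$, which fails to be $o(1/t)$; and the unconditional second moment bound in Lemma~\ref{lem:tau_e_1} is only stated under $(\Pp)'$, because $E[R]=\infty$ in this regime (cf.\ the analyses of \eqref{eqn:lim_e_var}--\eqref{eqn:lim_var_e}). The natural workaround is to invoke the substitution machinery of Subsection~\ref{subsec:reduc_small}: replace the (atypical, on the event $\{H\le h_t\}$ still possible) high excursions of $V$ on $\Z_-$ by independent low ones, so that the quenched second moment becomes controllable via the $(\Pp)'$-version of Lemma~\ref{lem:tau_e_1}, and check that the time spent by the walk backtracking into such replaced pieces is negligible. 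This reduction mirrors exactly what was done to handle $\tia$ when $\kappa=1$, and once it is carried out the same Chebyshev calculation as above delivers the $o(t^{-\kappa})$ bound. (Alternatively, a moment of order $1+\delta$ with $\delta>0$ small, together with Lemma~\ref{lem:renewalestimates} at $\gamma=1+\delta>\kappa$, should also suffice since $\ee^{\delta h_t}\cdot t^{-(1+\delta)}=o(t^{-1})$.)
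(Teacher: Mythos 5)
Your proposal follows essentially the same route as the paper: Markov with the first moment and Lemma \ref{lem:renewalestimates} for $0<\kappa<1$, Chebyshev with the second moment and Lemma \ref{lem:tau_e_1} for $1<\kappa<2$, and for $\kappa=1$ exactly the paper's reduction via Subsection \ref{subsec:reduc_small} (Lemma \ref{lem:remontee_difficile} to discard the backtracking time beyond $d_-$, then the $(\Pp)'$-version of Lemma \ref{lem:tau_e_1}). Only your parenthetical $(1+\delta)$-moment alternative is unsubstantiated as stated, since the paper provides no quenched bound for fractional moments of order $1+\delta$ and a naive H\"older interpolation reintroduces the non-integrable second moment; but your main argument does not rely on it.
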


\begin{proof}
Let us first assume that $0<\kappa<1$. Then, by Markov inequality, we get
\begin{align*}
\bPp(\tau>t, H\leq h_t)
	& = \Ep[P_\omega(\tau>t)\indic{H\leq h_t}] \leq \frac{1}{t} \Ep[E_\omega[\tau]\indic{H\leq h_t}]\\
	& \leq \frac{1}{t} \Ep[2M_1'M_2\ee^H \indic{H\leq h_t}] \leq \frac{1}{t} C \ee^{(1-\kappa)h_t},
\end{align*}
where the last inequality follows from Lemma~\ref{lem:renewalestimates}. Since $t^{-1}\ee^{(1-\kappa)h_t}=t^{-\kappa}(\log t)^{-(1-\kappa)}$, this settles this case. 

Let us now assume $1<\kappa<2$. By Markov inequality, we get
\begin{align*}
\bPp(\tau>t, H \le h_t)
	&  \le \frac{1}{t^2} \Ep[E_\omega[\tau^2]\indic{H \le h_t}].
	%& = \frac{1}{t^2} \left( \Ep[\Var_{\omega}(\tau)\indic{H\leq h_t}] +\Ep[E_{\omega}[\tau]^2\indic{H\leq h_t}]\right).
\end{align*}
Applying Lemma \ref{lem:tau_e_1} yields $\bPp (\tau>t, H \le h_t) \le Ct^{-2} \ee^{(2-\kappa)h_t}$, which concludes the proof of Lemma  \ref{lem:Hlarge} when $\kappa\neq 1$.

%It remains to deal with the case when $\kappa=1$. %The following proof works for any $\kappa>0$; we included the previous arguments for $\kappa\neq 1$ because of their simpler nature. 

For $\kappa=1$, neither of the above techniques works: the first one is too rough, and $\Var_\omega(\tau)$ is not integrable. We shall modify $\tau$ so as to make $\Var_\omega(\tau)$ integrable. To this end, let us refer to Subsection~\ref{subsec:reduc_small} and denote by $d_-$ the right end of the first excursion on the left of 0 that is higher than $h_t$, and by $\taut\defeq\tautx{d_-}(0,e_1)$ the time spent on the left of $d_-$ before reaching $e_1$. By Lemma~\ref{lem:remontee_difficile} we have $\bEp[\taut\indic{H<h_t}]\leq C h_t\ee^{-h_t}\leq C(\log t)^2t^{-1}$. Let us also introduce $\taut'$, which is defined like $\taut$ but in the modified environment, i.e.\ by replacing the high excursions (on the left of $d_-$) by small ones (cf.~after Lemma~\ref{lem:remontee_difficile}). Then we have
\begin{align*}
\bPp(\tau>t,H<h_t)
	& \leq \bPp(\taut>(\log t)^3,H<h_t)+\bPp(\tau-\taut>t-(\log t)^3, H<h_t)\\
	& \leq \frac{1}{(\log t)^3}\bEp[\taut\indic{H<h_t}] + \bPp(\tau-\taut+\taut'>t-(\log t)^3, H<h_t)\\
	& = o(t^{-1}) + (\bPp)'(\tau>t-(\log t)^3, H<h_t)\\
	& \leq  o(t^{-1}) + \frac{1}{(t-(\log t)^3)^2}(\Ep)'[E_\omega[\tau^2]\indic{H<h_t}],
\end{align*}
and Lemma \ref{lem:tau_e_1} allows us to conclude just like in the case $1<\kappa<2$. 

\begin{remark*}
An alternative proof for $\kappa=1$, avoiding the use of a modified environment, would consist in bounding the heights of all excursions on the left of 0 by increasing quantities so as to give this event overwhelming probability; this method is used after~\eqref{eqn:r^-}. 
\end{remark*}
\end{proof}

\subsection{``Good'' environments}
Let us introduce the following events  
 \begin{align*}
\Omega^{(1)}_t &\defeq   \left\{ e_1 \le C \log t \right\},\\
\Omega^{(2)}_t &\defeq    \left\{ \max\{ -V^{\downarrow}(0,T_H) \, ; \, V^{\uparrow}(T_H,e_1)\} \le \alpha \log t  \right\},\\
\Omega^{(3)}_t &\defeq   \left\{ R^- \le   (\log  t)^4 t^\alpha \right\},
\end{align*}
where $\max(0,1-\kappa)<\alpha<\min(1,2-\kappa)$ is arbitrary, and $R^-$ will be introduced in Subsection \ref{subsec:hproc}. 
Then, we define the set of ``good'' environments at time $t$ by 
\begin{equation*}\label{eq:goodenv}
 \Omega_t\defeq     \Omega^{(1)}_t \cap  \Omega^{(2)}_t \cap  \Omega^{(3)}_t.
  \end{equation*}
The following result tells that ``good'' environments are asymptotically typical.
 \begin{lemma}
 \label{lem:goodenv} 
The event $\Omega_t$ satisfies 
\begin{equation*}
\label{eq:estimatevalley2}
P( \Omega_t^c \,; \, H \ge h_t)=o(t^{-\kappa}), \qquad t \to \infty.
 \end{equation*}
\end{lemma}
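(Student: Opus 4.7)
My plan is to handle the three defining events of $\Omega_t$ separately, showing $P(\Omega_t^{(i)c};\, H \ge h_t) = o(t^{-\kappa})$ for $i = 1, 2, 3$, and to conclude by the union bound. In each case the tool is a Markov--type estimate (sometimes preceded by Cauchy--Schwarz against $\{H\ge h_t\}$) fed by the moment information already collected.

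For $i=1$: exponential integrability of $e_1$ under $P$ (noted after \eqref{eqn:def_e_i}) yields $P(e_1 > C\log t) \le t^{-\mu C}$ for some $\mu > 0$. Combining this with Iglehart's estimate \eqref{iglehartthm} via Cauchy--Schwarz,
\[
P(e_1 > C\log t,\, H\ge h_t) \le \sqrt{P(e_1>C\log t)\,P(H\ge h_t)} = O\bigl((\log t)^{\kappa/2}\,t^{-(\mu C+\kappa)/2}\bigr),
\]
which is $o(t^{-\kappa})$ as soon as $C$ is chosen with $\mu C > \kappa$.

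For $i=2$: step~1 allows me to restrict to $\Omega_t^{(1)}$, so the excursion has length at most $C\log t$. The heuristic is that requiring simultaneously $H\ge h_t$ and an extra fluctuation of size $\alpha\log t$ on the ascent or descent is a joint large--deviation event whose Laplace cost factorizes: the $H\ge h_t$ part contributes the factor $(\log t)^\kappa t^{-\kappa}$, while the extra fluctuation contributes $t^{-\kappa\alpha}$, obtained via Doob's inequality applied to the martingale $\ee^{\kappa V}$ (since $E[\rho_0^\kappa]=1$) for the upward moves, and by time--reversal or the pathwise positivity $V>0$ on $(0,e_1)$ for the downward moves along the ascent. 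Multiplying these and unioning over the $O((\log t)^2)$ pairs of endpoints at which the fluctuation can occur gives a bound of order $(\log t)^{O(1)}\,t^{-\kappa(1+\alpha)}=o(t^{-\kappa})$. The rigorous implementation will split the path at $T_H$ and exploit the conditional independence of $(V(k))_{0\le k\le T_H}$ and $(V(T_H+k)-V(T_H))_{k\ge 0}$ given $H$.

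For $i=3$: $R^-$ is a function of $(V(k))_{k\le 0}$, which by Lemma~\ref{lem:e_negatifs} is independent of $(V(k))_{k\ge 0}$ (and hence of $H$) under the conditioning relevant to this section; moreover $R^-$ inherits all finite moments from \eqref{eqn:r-}. Markov's inequality at a high moment $p$ then gives $P(R^- > (\log t)^4 t^\alpha) \le C_p (\log t)^{-4p} t^{-p\alpha}$, and multiplying by $P(H\ge h_t)=O((\log t)^\kappa t^{-\kappa})$ produces $o(t^{-\kappa})$ as soon as $p\ge 1$.

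The main obstacle I expect is making the heuristic factorization in step~2 rigorous, specifically the downward--fluctuation control along the ascent: since hypothesis~(\textit{a}) does not grant negative moments of $\rho_0$, the naive Chernoff bound in that direction is unavailable, and one must exploit either time--reversal (so that a drop becomes a rise of the reversed walk, controllable by Doob) or the constraint $V\ge 0$ on $(0,e_1)$ to bound drops in terms of the preceding ascents.
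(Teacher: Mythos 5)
Your plan is on the right track for $i=1$ and roughly right in spirit for $i=2$, but $i=3$ contains a genuine error that breaks the argument for $\kappa\le 1$.

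For $i=3$, you write that ``$R^-$ inherits all finite moments from \eqref{eqn:r-}.'' This confuses two different objects: \eqref{eqn:r-} is about $R_-=\sum_{k\le 0}\ee^{-V(k)}$, whereas $\Omega_t^{(3)}$ involves $R^-$, the much larger quantity
\[
R^-=\sum_{i\le -1}\Bigl(1+2\sum_{j=i+2}^{0}\ee^{V(j)-V(i+1)}\Bigr)\Bigl(\ee^{-V(i+1)}+2\sum_{j\le i-1}\ee^{-V(j+1)}\Bigr)
\le 4\sum_{k\le i\le j\le 0}\ee^{V(j)-V(i)-V(k)},
\]
which is comparable to a product of $R_-$--type series and sums of $\ee^{H_{-k}}$ over the left excursions. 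Under $\Pp$ this has a power-law tail: $\Ep[R^-]<\infty$ holds only when $\kappa>1$, and for $0<\kappa\le 1$ \emph{even the first moment is infinite}, so ``Markov at a high moment $p$'' is simply unavailable. The paper is forced to introduce the truncation event $A_t=\bigcap_{k\ge 1}\{H_{-k}<\frac{1}{\kappa}\log k^2+\log t+\log\log t\}$, prove $P(A_t^c)=o(t^{-\kappa})$ from Iglehart's estimate, and then bound $\Ep[R^-\ind_{A_t}]$ (which grows like $(t\log t)^{1-\kappa}$ when $\kappa<1$ and like $\log t$ when $\kappa=1$) before applying Markov. The choice $\alpha>\max(0,1-\kappa)$ is precisely what makes this conclusion work; under your reasoning the lower constraint on $\alpha$ would be unexplained. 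This is not a cosmetic fix — it is the only nontrivial part of the case $i=3$.

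For $i=2$, your heuristic (factorize the cost of $\{H\ge h_t\}$ and the extra fluctuation) is the right intuition, and you correctly flag that the downward direction is where care is needed. The paper does not use time-reversal or the $V\ge 0$ constraint directly; instead it applies the strong Markov property at $T^{\downarrow}(\alpha\log t)$ (the first time the walk drops $\alpha\log t$ below its running maximum), sums over the slice $M_\alpha\in[p,p+1)$ of the running maximum at that time, and uses the renewal tail $P(S\ge h)\le C\ee^{-\kappa h}$ from \eqref{eqn:feller} twice (once to reach level $p$, once to climb back to $h_t$ after the drop), giving a $p$-independent bound $C\ee^{-\kappa(h_t+\alpha\log t)}$ for each slice, with only $O(h_t)$ slices. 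The upward piece $V^\uparrow(T_H,e_1)$ is handled by a union bound over the $O(\log t)$ excursions following $T_{h_t}$ together with Iglehart's estimate, which is indeed equivalent in spirit to the Doob argument you mention. Your $i=2$ sketch is therefore plausible but unfinished exactly at the step you yourself identify as the obstacle, and the paper's mechanism (strong Markov at the first big drop plus the Feller bound) is what closes it.

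For $i=1$, your Cauchy--Schwarz step is harmless but unnecessary: since $P((\Omega_t^{(1)})^c;H\ge h_t)\le P(e_1>C\log t)\le t^{-\mu C}$, choosing $C$ with $\mu C>\kappa$ already suffices.
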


The proof of this result is easy but technical and postponed to the Appendix.% page~\pageref{sec:appendix}.

\subsection{Preliminary results: two $h$-processes}
\label{subsec:hproc}

In order to estimate finely the time spent in a deep valley, we decompose the passage from $0$ to $e_1$ into the sum of a random geometrically distributed number, denoted by $N$, of unsuccessful attempts to reach $e_1$ from $0$ (i.e. excursions of the particle  from $0$ to $0$ which do not hit $e_1$), followed by a successful attempt.   More precisely, since $N$ is a geometrically distributed random variable with parameter $1-p$
satisfying
\begin{equation}
\label{1-p} 
1-p = \frac{\omega_0}{\sum_{x=0}^{e_1-1}
\ee^{V(x)}}= \frac{\omega_0}{M_2 \ee^H},
\end{equation}
 we can write $ \tau(e_1)=\sum_{i=1}^{N} F_i +G,$ where the $F_i$'s are the durations of the successive i.i.d. failures and $G$ that of the first success. The accurate estimation of the time spent by each (successful and unsuccessful) attempt leads us to consider two $h$-processes where the random walker evolves in two modified potentials, one corresponding to the conditioning on a failure (see the potential $\widehat{V}$) and the other to the conditioning on a success (see the potential $\bar{V}$). Note that this approach was first introduced in \cite{limitlaws} to estimate the quenched Laplace transform of the occupation time of a deep valley in the case $0<\kappa<1.$

\subsubsection{The failure case: the $h$-potential $\widehat{V}$}
\label{Vcheck}

Let us fix a realization of $\omega.$ To introduce the $h$-potential
$\widehat{V},$ we  define $h(x)\defeq 
P_{x,\omega} (\tau(0) < \tau(e_1)).$ For any $0<x<e_1,$
we introduce $\widehat{\omega}_x\defeq \omega_x \frac{h(x+1)}{h(x)}.$ Since $h$ is a harmonic function, we have $1-\widehat{\omega}_x=(1-\omega_x) \frac{h(x-1)}{h(x)}.$ Note that $h(x)$ satisfies, see \eqref{eqn:zeitouni_p},
 \begin{equation}
\label{def:h(x)}
h(x) = {\sum_{k=x}^{e_1-1} \ee^{V(k)}}{\bigg(\sum_{k=0}^{e_1-1} \ee^{V(k)}\bigg)^{-1}} ,\qquad 0 < x < e_1.
\end{equation}
Now, $\widehat{V}$ can be defined for $x \ge 0$  by
$
\widehat{V}(x) \defeq   \sum_{i=1}^x \log \frac{1-\widehat{\omega}_i}{\widehat{\omega}_i}.
$
We obtain for any $0\le x<y<e_1,$
\begin{equation}
\label{Vcheckeq0}
\widehat{V}(y)-\widehat{V}(x)=\left(V(y)-V(x)\right)+ \log \bigg(\frac{
h(x) \, h(x+1) }{ h(y) \, h(y+1)} \bigg).
\end{equation}

\noindent Since $h(x)$ is a decreasing function of $x$ (by definition), we get
 for any $0\le x < y \le e_1,$
\begin{equation}\label{Vcheckeq2}
\widehat{V}(y)-\widehat{V}(x) \ge V(y)-V(x).
\end{equation}
From \cite{limitlaws} (see Lemma 12), we recall the following explicit computations for the first and second moments of $F.$ For any environment $\omega,$ we have
\begin{equation*}\label{expF}
   E_{\omega} \left[ F \right]= 2 \, \omega_0 \bigg( \sum_{i=-\infty}^{-1}
    \ee^{-V(i)} + \sum_{i=0}^{e_1-1}
    \ee^{-\widehat{V}(i)}\bigg) \defqe  2 \, \omega_0 \,  \widehat M_1,
\end{equation*}
and
\begin{equation}\label{expF^2}
    E_{\omega} \left[ F^2 \right]= 4 \omega_0 \, R^+ + 4(1-
    \omega_0) \, R^-,
\end{equation}
\noindent where $R^+$ and $R^-$ are defined by
\begin{align*}
  R^+ &\defeq  \sum_{i=1}^{e_1-1} \bigg(1+2 \sum_{j=0}^{i-2} \ee^{\widehat{V}(j)-\widehat{V}(i-1)} \bigg) \bigg( \ee^{-\widehat{V}(i-1)} +2 \sum_{j=i+1}^{e_1-1} \ee^{-\widehat{V}(j-1)} \bigg),\\
  R^- &\defeq  \sum_{i=-\infty}^{-1} \bigg(1+2 \sum_{j=i+2}^{0} \ee^{V(j)-V(i+1)} \bigg)\bigg( \ee^{-V(i+1)} +2 \sum_{j=-\infty}^{i-1} \ee^{-V(j+1)}\bigg).
\end{align*}

Moreover, we can prove the following useful properties.
\begin{lemma} 
 \label{lemmafailurebound} 
For all $t\ge1,$ we have on  $\Omega_t$
\begin{align}
\label{eq:boundVarF}
\Var_\omega(F) &\le C (\log  t)^4 t^\alpha,\\
M_2 &\le C \log t,\label{eq:boundM2}\\
\vert \widehat M_1 -M_1 \vert &\le o(t^{-\delta})M_1,\label{eq:boundM1}
\end{align}
with $\delta \in(0,1-\alpha).$
\end{lemma}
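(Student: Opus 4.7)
The three inequalities are handled successively, each drawing on specific parts of $\Omega_t$.

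\textbf{(i)} The bound $M_2 \le C\log t$ is immediate: since $V(k)\le H=V(T_H)$ on $[0,e_1)$, every summand of $M_2=\sum_{0\le k<e_1}e^{V(k)-H}$ is at most $1$, so $M_2\le e_1\le C\log t$ on $\Omega^{(1)}_t$.

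\textbf{(ii)} For $\Var_\omega(F)\le C(\log t)^4 t^\alpha$, I would start from $\Var_\omega(F)\le E_\omega[F^2]$ and use \eqref{expF^2} to separate $R^+$ and $R^-$. On $\Omega^{(3)}_t$ we have $R^-\le(\log t)^4 t^\alpha$ by definition. For $R^+$, the main tool is the identity $e^{\widehat V(y)-\widehat V(x)}=e^{V(y)-V(x)} h(x)h(x+1)/(h(y)h(y+1))$ coming from \eqref{Vcheckeq0}, together with the one-sided comparison \eqref{Vcheckeq2}. I would then split the sum defining $R^+$ according to whether the index $i-1$ lies in the ascent $[0,T_H]$ or in the descent $(T_H,e_1-1]$ of $V$. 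On the ascent, the drawdown estimate $V(j)-V(k)\le\alpha\log t$ for $j\le k\le T_H$ from $\Omega^{(2)}_t$, combined with $e_1\le C\log t$, bounds all relevant exponentials by $t^\alpha$. On the descent, $h(l)$ loses its dominant $e^H$-term for $l>T_H$, and the rise bound on $\Omega^{(2)}_t$ applied to $V(l)\ge\min_{T_H\le k\le l}V(k)$ yields $e^{-\widehat V(l)}\le C(\log t)^2 t^{\alpha-1}$ for $l>T_H$. This smallness compensates the potentially large factor $e^{V(j)-V(l)}$ arising when $j\le T_H<l$ in the inner sums, and careful bookkeeping delivers $R^+\le C(\log t)^4 t^\alpha$.

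\textbf{(iii)} For $|\widehat M_1-M_1|\le o(t^{-\delta})M_1$, I would decompose
\[\widehat M_1-M_1=\sum_{0\le i<T_H}e^{-V(i)}\bigg(\frac{h(i)h(i+1)}{h(1)}-1\bigg)+\sum_{T_H\le i<e_1}e^{-\widehat V(i)}.\]
Expanding the parenthesis in the first sum, its absolute value is of order $\sum_{k<i}e^{V(k)-H}/M_2$; combining $e^H\ge e^{h_t}=t/\log t$ with the drawdown bound from $\Omega^{(2)}_t$ yields a total first-sum contribution of order $(\log t)^3 t^{\alpha-1}$. The second sum is bounded through the estimate on $e^{-\widehat V(l)}$ obtained in step (ii), again of order $(\log t)^3 t^{\alpha-1}$. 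Since $M_1\ge e^{-V(0)}=1$, the ratio $|\widehat M_1-M_1|/M_1=O((\log t)^3 t^{\alpha-1})=o(t^{-\delta})$ for any $\delta<1-\alpha$.

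The main obstacle is the bound on $R^+$: unlike $R^-$, which is controlled by the single event $\Omega^{(3)}_t$, $R^+$ mixes contributions from both the ascent and the descent of $V$. The naive comparison $\widehat V\ge V$ is too crude on the descent, so one must use the explicit $h$-ratio identity to cancel the large factor $e^{V(j)-V(l)}$ against the exponential smallness of $e^{-\widehat V(l)}$ induced by the $h$-transform.
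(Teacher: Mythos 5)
Your proposal is correct and follows essentially the same route as the paper's proof: \eqref{eq:boundM2} is immediate from $e_1\le C\log t$, \eqref{eq:boundVarF} comes from $\Omega_t^{(3)}$ for $R^-$ and from the $h$-ratio identity \eqref{Vcheckeq0} combined with the drawdown/rise control of $\Omega_t^{(2)}$ for $R^+$, and \eqref{eq:boundM1} from a decomposition of $\widehat M_1-M_1$ into ascent and descent contributions. The only difference is organizational: the paper first writes $R^+\le C(\log t)^3\ee^{-\widehat V^{\downarrow}(0,e_1)}\max_{j}\ee^{-\widehat V(j)}$ and then controls these two quantities (via $\widehat V\ge V$ on the ascent and the $h$-ratio on the descent), whereas you split the outer sum of $R^+$ by ascent/descent directly, but the underlying estimates are the same.
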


The proof of this result is postponed to the appendix.

\medskip

%\begin{remark}
%\label{Goldsheid-alili} Alili \cite{alili} and Goldsheid
%\cite{goldsheid} prove a similar result for a non-conditioned
%hitting time.
%\end{remark}

\subsubsection{The success case: the $h$-potential $\bar{V}$}
\label{Vbar}

 In a similar way, we introduce the $h$-potential $\bar{V}$
by defining $g(x)\defeq 
P_{x,\omega} (\tau(e_1) < \tau(0))=1-h(x).$ For any $0<x<e_1,$
we introduce $\bar{\omega}_x\defeq \omega_x \frac{g(x+1)}{g(x)}.$ Since $g$ is a harmonic function, we have $1-\bar{\omega}_x=(1-\omega_x) \frac{g(x-1)}{g(x)}.$  
Note that $g(x)$ satisfies, see \eqref{eqn:zeitouni_p}, 
 \begin{equation}
\label{def:g(x)}
g(x) = {\sum_{k=0}^{x-1} \ee^{V(k)}}{\bigg(\sum_{k=0}^{e_1-1} \ee^{V(k)}\bigg)^{-1}} ,\qquad 0 < x < e_1.
\end{equation}
Then, $\bar{V}$ can be defined for $x \ge 0$  by
$$
\bar{V}(x) \defeq   \sum_{i=1}^x \log \frac{1-\bar{\omega}_i }{\bar{\omega}_i}. $$
Moreover, for any $0<x<y\le e_1,$ we have
\begin{equation}
\label{Vbareq0} \bar{V}(y)-\bar{V}(x)=\left(V(y)-V(x)\right)+ \log
\bigg(\frac{ g(x) \, g(x+1) }{ g(y) g(y+1)} \bigg).
\end{equation}

\noindent Since $g(x)$ is a increasing function of $x,$ we get
 for any $0 \le x < y \le e_1,$
\begin{equation}
\label{Vbareq2}
   \bar{V}(y)-\bar{V}(x) \le V(y)-V(x).
\end{equation}
Moreover, we have for any environment $\omega$ (see \eqref{eqn:zeitouni_e2}),
\begin{equation}\label{expS2}
   E_{\omega}[G] \le   2 \sum_{0 \le i \le j < n }  \ee^{\bar V(j)- \bar V(i)}.
\end{equation}
Using this expression, we can use the ``good'' properties of the environment to obtain the following bound.
\begin{lemma} 
 \label{lemmafsuccessbound} 
For all $t\ge1,$ we have on  $\Omega_t$
\begin{equation*}
E_{\omega}[G] \le C (\log  t)^4 t^\alpha.
\end{equation*}
\end{lemma}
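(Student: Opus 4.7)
The plan is to start from the upper bound \eqref{expS2} and estimate $\sum_{0 \le i \le j < e_1} e^{\bar V(j) - \bar V(i)}$ by exploiting the explicit form of the Doob $h$-transform. Setting $A_x \defeq \sum_{k=0}^{x-1} e^{V(k)}$, the combination of \eqref{Vbareq0} and \eqref{def:g(x)} yields, for $1 \le i \le j < e_1$,
\begin{equation*}
e^{\bar V(j) - \bar V(i)} = e^{V(j) - V(i)} \cdot \frac{A_i A_{i+1}}{A_j A_{j+1}},
\end{equation*}
and the ratio of partial sums is precisely what compensates for the exponential growth $e^{V(j) - V(i)}$ that would otherwise prevent any useful bound. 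I would then split the sum according to whether $(i, j)$ lies on the ascent $\{j \le T_H\}$, on the descent $\{i \ge T_H\}$, or straddles the peak; the boundary case $i = 0$ is negligible since $g(0) = 0$ forces the corresponding terms to vanish apart from the diagonal.

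On the descent, the pointwise bound \eqref{Vbareq2} combined with $V^\uparrow(T_H, e_1) \le \alpha \log t$ from $\Omega_t^{(2)}$ yields $e^{\bar V(j) - \bar V(i)} \le t^\alpha$, contributing at most $e_1^2\, t^\alpha = O((\log t)^2 t^\alpha)$. On the ascent and in the straddling region, however, \eqref{Vbareq2} would produce catastrophic factors of $e^H$, so instead I would use the telescoping identity $\tfrac{e^{V(j)}}{A_j A_{j+1}} = \tfrac{1}{A_j} - \tfrac{1}{A_{j+1}}$ to sum over $j$ explicitly. For fixed $1 \le i \le T_H$ on the ascent,
\begin{equation*}
\sum_{j=i}^{T_H} e^{\bar V(j) - \bar V(i)} = \frac{A_i A_{i+1}}{e^{V(i)}}\bigg(\frac{1}{A_i} - \frac{1}{A_{T_H + 1}}\bigg) \le \frac{A_{i+1}}{e^{V(i)}} = 1 + \sum_{k=0}^{i-1} e^{V(k) - V(i)} \le 1 + T_H\, t^\alpha,
\end{equation*}
where the last inequality uses $V(k) - V(i) \le -V^\downarrow(0, T_H) \le \alpha\log t$ on $\Omega_t^{(2)}$; summing over $i$ yields $O((\log t)^2 t^\alpha)$. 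For the straddling region, the parallel telescoping over $j \in (T_H, e_1)$ produces $\tfrac{A_i A_{i+1}}{e^{V(i)} A_{T_H+1}}$; with the lower bound $A_{T_H + 1} \ge e^{V(T_H)} = e^H$ and the upper bound $A_{i+1} \le (i+1)\, e^{V_i^\star}$, where $V_i^\star = \max_{k \le i} V(k) \le H$ and $V_i^\star - V(i) \le \alpha\log t$ from $\Omega_t^{(2)}$, this reduces to $T_H^2\, e^{V_i^\star - V(i)}\, e^{V_i^\star - H} \le T_H^2\, t^\alpha$ for each $i$, hence $O((\log t)^3 t^\alpha)$ in total.

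Collecting the three contributions and invoking $T_H \le e_1 \le C\log t$ from $\Omega_t^{(1)}$ gives the claimed bound $C(\log t)^4 t^\alpha$ with some room to spare. The main obstacle is recognizing that the naive pointwise estimate \eqref{Vbareq2} must be bypassed on the ascent and across the peak, where it loses an unmanageable factor of $e^H$: the correct remedy is the telescoping identity above, which exposes the cancellation built into the Doob transform through the partial sums $A_x$.
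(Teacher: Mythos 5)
Your proof is correct, and it reaches the required bound (indeed you get $C(\log t)^3t^\alpha$, as does the paper) by a genuinely different bookkeeping of the same underlying cancellation. The paper also starts from \eqref{expS2} and also treats the two sides of $T_H$ differently: on the descent it uses \eqref{Vbareq2} with $V^{\uparrow}(T_H,e_1)\le\alpha\log t$, and on the ascent it uses the $h$-transform correction $\log\frac{g(x)g(x+1)}{g(y)g(y+1)}$ coming from \eqref{Vbareq0} and \eqref{def:g(x)}; but instead of summing the double sum it bounds the \emph{largest rise} $\bar V^{\uparrow}(0,e_1)$ of the conditioned potential by $\alpha\log t+\log\log t+O(1)$ (comparing each partial sum $\sum_{k<x}\ee^{V(k)}$ with its largest term, at the cost of $\log t$ factors via $\Omega_t^{(1)}$, and using $\max_{T_H\le y\le e_1}\bar V(y)\le\bar V(T_H)$ to handle pairs straddling the peak), and then multiplies by the $O((\log t)^2)$ pairs $(i,j)$. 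Your telescoping identity $\ee^{V(j)}/(A_jA_{j+1})=1/A_j-1/A_{j+1}$ replaces that uniform estimate on the rises of $\bar V$ by an exact summation in $j$, which is arguably cleaner: it never requires a maximum principle for $\bar V$ across the peak, and the straddling pairs are absorbed by the single lower bound $A_{T_H+1}\ge\ee^{H}$. Two small points you should make explicit in a write-up, though both are immediate: the convention $g(0)=0$ (consistent with \eqref{def:g(x)} and with the fact that the conditioned walk never steps back to $0$) which justifies discarding the $i=0$ terms, and the value $g(e_1)=1=A_{e_1}/A_{e_1}$ needed in your ratio identity when $j+1=e_1$. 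With those remarks, your argument is a complete and valid alternative to the paper's proof, differing only in that the paper trades the exact summation for a sup-over-increments bound on $\bar V$.
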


The proof of this result is again postponed to the appendix.

\medskip

\subsection{Proof of Proposition \ref{prop:occupationdeep}}

Recalling Lemma \ref{lem:Hlarge} and Lemma \ref{lem:goodenv},  the proof of Proposition \ref{prop:occupationdeep} boils down to showing that
\begin{equation*}
 t^{\kappa} P(H \ge h_t ) \,  \bP^{\overline \Omega_t} \left(\tau \ge t  \right) \longrightarrow C_T, \qquad t \to \infty,
\end{equation*}
where
\begin{equation*}
\overline \Omega_t \defeq  \Omega_t  \cap \{H \ge h_t\}  \cap \{ \forall x \le 0,\ V(x)\le 0\}.
\end{equation*}
Using the notations introduced in the previous subsections we can first write
 \begin{equation}
  \bP^{\overline \Omega_t} \left(\tau \ge t  \right) = E^{\overline \Omega_t} \left[  P_\omega \left(\tau \ge t \right)  \right]  =  E^{\overline \Omega_t} \left[ \sum_{k\ge0} (1-p) p^k P_{\omega} (\sum_{i=1}^k F_i +G \ge t )  \right].
  \label{eq:decompoechec}
\end{equation}
Moreover, note that we will use $\epsilon_t$ in this subsection to denote a function which tends to $0$ when $t$ tends to infinity but whose value can change from line to line.

\subsubsection{Proof of the lower bound} Let us introduce $
\xi_t\defeq (\log t)^{-1}$ for  $t \ge \ee$ and
 \begin{equation}
 \label{def:K+}
 K_+\defeq   \frac{ \ee^{\xi_t} t }{E_\omega \left[F\right]}=  \frac{ \ee^{\xi_t} t}{2 \omega_0 \widehat M_1}.
\end{equation}
Since the random variable $G$ is nonnegative,  the sum in \eqref{eq:decompoechec} is larger than
 \begin{align}
  & \sum_{k\ge0} (1-p) p^k P_{\omega} (\sum_{i=1}^k F_i \ge t \, ; \, k \ge K_+)\notag\\
  &\ge p^{K_+} -(1-p) \sum_{k\ge K_+} p^k P_{\omega} (\sum_{i=1}^k F_i \le t  \, ; \, k \ge K_+).
  \label{eq:probaterm1}
\end{align}
Now  for any $k \ge K_+,$ the probability term in \eqref{eq:probaterm1} is less than
\begin{align}
 & P_{\omega} (\sum_{i=1}^k F_i \le \ee^{-\xi_t} k E_\omega \left[F\right]) \le \frac{\Var_\omega(F)}{k E_\omega \left[F\right]^2 (1-\ee^{-\xi_t})^2 } \notag\\
 &\le \frac{\Var_\omega(F)}{K_+ E_\omega \left[F\right]^2 (1-\ee^{-\xi_t})^2 }
 \le  \frac{\Var_\omega(F)}{ t (1-\ee^{-\xi_t})^2 },
   \label{eq:probaterm2}
  \end{align}
the last inequality being a consequence of the definition of $K_+$ given by \eqref{def:K+} together with the fact that $ \ee^{\xi_t}  E_\omega \left[F\right] \ge 1.$ Therefore, assembling \eqref{eq:probaterm1} and \eqref{eq:probaterm2} yields
 \begin{equation*}
  \bP^{\overline \Omega_t} \left(\tau \ge t  \right) \ge E^{\overline \Omega_t} \left[  \left(1-  \frac{\Var_\omega(F)}{ t (1-\ee^{-\xi_t})^2 } \right)  p^{K_+}\right].
\end{equation*}
Since $\alpha<2-\kappa<1,$ Lemma  \ref{lemmafailurebound} implies
 \begin{equation}
  \label{eq:mino1}
  \bP^{\overline \Omega_t} \left(\tau \ge t  \right) \ge  \left(1-\epsilon_t \right)  E^{\overline \Omega_t} \left[  p^{K_+}\right].
\end{equation}
Furthermore, recalling \eqref{1-p} and  \eqref{def:K+} yields
 \begin{equation*}
E^{\overline \Omega_t} \left[  p^{K_+}\right]= E^{\overline \Omega_t} \left[  (1- \frac{\omega_0}{M_2 \ee^H})^{ \frac{ \ee^{\xi_t} t }{2 \omega_0 \widehat  M_1}} \right]
\ge  E^{\overline\Omega_t} \left[  (1- \frac{\omega_0}{M_2 \ee^H})^{ \frac{ \ee^{\xi'_t} t }{2 \omega_0 M_1}} \right],
 \end{equation*}
where the inequality is a consequence of Lemma \ref{lemmafailurebound}  and $\xi'_t\defeq \xi_t- \log (1-o(t^{-\delta}))= \xi_t +o(t^{-\delta}).$  Then, observe that $\omega_0 / M_2 \ee^H \le \ee^{-h_t} $ and recall that $\log (1-x) \ge -x(1+x),$ for $x$ small enough, such that we obtain
 \begin{equation*}
E^{\overline \Omega_t} \left[  p^{K_+}\right]\ge E^{\overline \Omega_t}  \left[ \exp\left\{- \frac{ \ee^{\xi'_t} t }{2 Z} (1+ \frac{\omega_0}{M_2 \ee^H})  \right\} \right]  \ge  E^{\overline \Omega_t} \left[ \ee^{- \frac{ \ee^{\xi''_t} t }{2 Z}  } \right],
\end{equation*}
where we recall that $Z= M_1 M_2 \, \ee^{H}$ and  $\xi''_t\defeq \xi'_t + \log (1+\ee^{-h_t} )= \xi_t+o(t^{-\delta}).$ Moreover Lemma \ref{lem:goodenv}  implies
 \begin{equation*}
  E^{\overline \Omega_t} \left[ \ee^{- \frac{ \ee^{\xi''_t} t}{2 Z}  } \right] 
\ge (1-\epsilon_t)  E^{ \Omega_t^*} \left[ \ee^{- \frac{ \ee^{\xi''_t} t}{2 Z}  } \right] - \epsilon_t \frac{t^{-\kappa}}{P(H \ge h_t)},
 \end{equation*} 
 where
\begin{equation*}
 \Omega_t^* \defeq  \{H \ge h_t\}  \cap \{ V(x) \ge 0, \, \forall x \le 0\}.
\end{equation*}
Now, we would like to integrate with respect to $Z.$ To this goal, let us introduce the notation $F_Z^{(t)}(z)\defeq  P^{\ge0}(Z>z \, | \, H \ge h_t ).$ An integration by part yields
 \begin{align}
 \nonumber
E^{\Omega_t^*}\left[ \ee^{- \frac{ \ee^{\xi''_t} t}{2 Z}  } \right] 
	& = \int_{\ee^{h_t}}^{\infty}  \ee^{- \frac{ \ee^{\xi''_t} t }{2 z}  }     \d F_Z^{(t)}(z)\\
	&= -  \ee^{- \frac{ \ee^{\xi''_t} t }{2 \ee^{h_t}}  }  F_Z^{(t)}(\ee^{h_t} ) +  \int_{\ee^{h_t}}^{\infty}  \frac{ \ee^{\xi''_t} t  }{2 z^2}   \ee^{- \frac{ \ee^{\xi''_t} t}{2 z}  }  F_Z^{(t)}(z) \d z.  \label{Ipp}
 \end{align}
Then, let us make the crucial observation that
 \begin{equation*}
 F_Z^{(t)}(z)= \frac{P^{\ge0}(Z>z )}{P( H \ge h_t )} - \frac{P^{\ge0}(Z>z \, ; \, H < h_t )}{P(H \ge h_t )}.
 \end{equation*} 
 Therefore, denoting by $I$ the integral in \eqref{Ipp}, we can write $I=I_1-I_2$, where $I_1$ and $I_2$ are given by
  \begin{align*}
 I_1 &\defeq \frac{1}{P( H \ge h_t )} \int_{\ee^{h_t}}^{\infty}  \frac{ \ee^{\xi''_t} t }{2 z^2} \ee^{- \frac{ \ee^{\xi''_t} t}{2 z}  }  P^{\ge0}(Z>z ) \d z,\\
  I_2 &\defeq \frac{1}{P(H \ge h_t )}  \int_{\ee^{h_t}}^{\infty}  \frac{ \ee^{\xi''_t} t}{2 z^2} \ee^{- \frac{ \ee^{\xi''_t} t}{2 z}  } P^{\ge0}(Z>z \, ; \, H < h_t ) \d z.
 \end{align*}
 To treat $I_1,$ let us recall that Lemma~\ref{lem:tail_z} gives the tail behaviour of $Z$ under $\Pp$:
   \begin{equation}
   \label{eq:tailZ}
 (1- \epsilon_t) C_U z^{-\kappa}   \le   P^{\ge0}(Z>z )  \le (1+ \epsilon_t) C_U z^{-\kappa},
 \end{equation}
for all $z\ge \ee^{h_t}.$  Hence, we are led to compute the integral
  \begin{equation}
 \int_{\ee^{h_t}}^{\infty}  \frac{ \ee^{\xi''_t} t}{2 z^2}   \ee^{- \frac{ \ee^{\xi''_t} t  }{2 z}  } z^{-\kappa} \d z =\ee^{- \kappa \xi''_t} {2^\kappa} \left(  \int_{0}^{ \frac{ \ee^{\xi''_t} t  }{2}}    \ee^{- y  } y^{\kappa} \d y \right)    t^{-\kappa},
 \label{eq:intgamma}
 \end{equation}
by making the change of variables given by $y=\ee^{\xi''_t} t /2 z.$ Observe that the integral in \eqref{eq:intgamma} is close to $\Gamma(\kappa+1)$ when $t$ tends to infinity (indeed $\xi''_t \to 0$). Therefore, recalling \eqref{iglehartthm}  and that $C_T=C_U 2^\kappa \Gamma(\kappa+1)$, we obtain
   \begin{equation}
     \label{eq:I1}
 (1- \epsilon_t) C_T t^{-\kappa}   \le    I_1 \, P(H \ge h_t )  \le (1+ \epsilon_t) C_T t^{-\kappa}.
 \end{equation}
 
We turn now to $I_2.$ Repeating the proof of Corollary $4.2$ in \cite{renewal} yields   
\begin{equation*}
P^{\ge0}(Z>z \, ; \, H < h_t ) \le C z^{-\eta} \ee^{(\eta-\kappa) h_t},
 \end{equation*}
for any $\eta>\kappa$ and all $z \ge \ee^{h_t}.$
 Therefore, repeating the previous computation, we get
  \begin{equation}
  \label{eq:I2}
 I_2 \, P(H \ge h_t ) \le C 2^\eta  \left(  \int_{0}^{ \frac{ \ee^{\xi''_t} t  }{2}}    \ee^{- y  } y^{\eta} \d y \right)  \ee^{(\eta-\kappa) h_t}  t^{-\eta} 
\le C 2^\eta \Gamma(\eta+1) \ee^{(\eta-\kappa) h_t}  t^{-\eta},
 \end{equation}
which yields $ I_2 \, P(H \ge h_t ) \le  \epsilon_t t^{-\kappa},$ by choosing $\eta$ larger than $\kappa$ and recalling \eqref{h_T:def}.
 
  Then assembling \eqref{eq:I1} and \eqref{eq:I2} implies $ I  \, P(H \ge h_t )  \ge (1- \epsilon_t) C_T t^{-\kappa}$ and coming back to \eqref{eq:mino1}--\eqref{Ipp}, we obtain
    \begin{equation*}
  \label{eq:I2b}
P(H \ge h_t ) \,  \bP^{\overline \Omega_t}\left(\tau \ge t \right)   \ge  -  \ee^{- \frac{ \ee^{\xi''_t} t }{2 \ee^{h_t}}  } P(H \ge h_t ) + (1- \epsilon_t) C_T t^{-\kappa},
 \end{equation*}
which concludes the proof of the lower bound since  $ \exp\{- \frac{ \ee^{\xi''_t} t }{2\ee^{h_t}}  \} P(H \ge h_t )  =o(t^{-\kappa})$ 
when $t$ tends to infinity; indeed $y^{\kappa} \ee^{-c y} \to 0$ when $y \to \infty$ and $t^{-1} \ee^{h_t} \to 0$ when $t \to \infty,$ see \eqref{h_T:def}.

\subsubsection{Proof of the upper bound}

Using still the notations $
\xi_t\defeq (\log t)^{-1}$ for  $t \ge \ee$, let us now introduce
 \begin{equation*}
 K_-\defeq   \frac{ \ee^{-\xi_t} t}{E_\omega \left[F\right]}=  \frac{ \ee^{-\xi_t} t}{2 \omega_0 \widehat M_1}.
 \label{eq:defK-}
\end{equation*}
Let also $\eta_t\defeq\xi_t-\frac{1}{2}\xi_t^2$, so that $0<\eta_t<1-\ee^{-\xi_t}$. The sum in \eqref{eq:decompoechec} is smaller than
 \begin{align}
 & p^{K_-} +(1-p) \sum_{k\le K_-} p^k P_{\omega} (\sum_{i=1}^k F_i + G \ge t) \notag\\
  &\le p^{K_-} + \frac{E_{\omega}[G] }{\eta_t  t} + (1-p) \sum_{k\le K_-} p^k P_{\omega} (\sum_{i=1}^k F_i \ge t (1-\eta_t) ),
  \label{eq:probaterm11}
\end{align}
the inequality being a consequence of Chebychev inequality.
Furthermore, observe that  $k \le K_-$ implies $t\geq ke^{\xi_t}E_\omega[F]$ hence the probability term in \eqref{eq:probaterm11} is less than
\begin{equation*}
P_\omega(\sum_{i=1}^k F_i -kE_\omega[F]\geq k(\ee^{\xi_t}(1-\eta_t)-1)E_\omega[F]) \leq \frac{\Var_\omega(F)}{k(\ee^{\xi_t}(1-\eta_t)-1)^2 E_\omega[F]^2}
  \end{equation*}
(remembering $1-\eta_t>\ee^{-\xi t}$). Therefore, 
\begin{equation*}
P_\omega(\tau\geq t)
	 \leq p^{K_-} + \frac{E_\omega[G]}{\eta_t t}  + \frac{\Var_\omega(F)}{(\ee^{\xi_t}(1-\eta_t)-1)^2 E_\omega[F]^2}\sum_{k\leq K_-} \frac{(1-p)p^k}{k}.
\end{equation*}
The last sum is less than $(1-p)\log\frac{1}{1-p} = \frac{\omega_0}{M_2 \ee^H}\log\frac{M_2 \ee^H}{\omega_0}$. On the event $\overline{\Omega}_t$, we have $\ee^H\geq\ee^{h_t}$, $E_\omega[F]\geq 1$, $M_2\geq 1$, $\frac{1}{2}\leq \omega_0\leq 1$, and \eqref{eq:boundVarF}, hence 
\begin{equation*}
\bP^{ \overline \Omega_t} \left(\tau \ge t\right) \le E^{\overline  \Omega_t} \left[p^{K_-} \right] + E^{\overline  \Omega_t} \left[ \frac{E_{\omega}[G] }{\eta_t t} \right] + \frac{C(\log t)^4 t^\alpha}{(\ee^{\xi_t}(1-\eta_t)-1)^2}\frac{1}{\ee^{h_t}} E^{\overline  \Omega_t} \left[ \log(2M_2\ee^H) \right].
\end{equation*}
Let us now bound the three terms in the right-hand side of the previous equation. Consider the last one. Using Lemma \ref{lem:renewalestimates} and~\eqref{iglehartthm}, we have $E[\log(M_2 \ee^H)|H\geq h_t]=E[\log M_2|H\geq h_t]+E[H|H\geq h_t]\leq C+h_t$ for some constant $C$. When $t\to\infty$, $\ee^{\xi_t}(1-\eta_t)-1\sim \frac{\xi_t^3}{6}=\frac{1}{6\log^3 t}$. Since $\ee^{h_t}=\frac{t}{\log t}$ and $\alpha< 1$, the whole term is seen to converge polynomially to zero. In particular, 
\begin{equation} \label{eq:up3}
\frac{C(\log t)^4 t^\alpha}{(\ee^{\xi_t}(1-\eta_t)-1)^2}\frac{1}{\ee^{h_t}} E^{\overline  \Omega_t} \left[ \log(2M_2\ee^H) \right]\leq \epsilon_t  \frac{t^{-\kappa}}{P(H\geq h_t)}. 
\end{equation}

For the second term, Lemma \ref{lemmafsuccessbound} implies
 \begin{equation}
 \label{eq:up2}
E^{\overline \Omega_t} \left[ \frac{E_{\omega}[G] }{\xi_t t }  \right] \le C \frac{(\log t)^4 t^\alpha}{\xi_t t }   \le \epsilon_t \frac{t^{-\kappa}}{P(H\ge h_t)},
  \end{equation}
  since $\alpha<1$. 
Finally, for the first expectation, we repeat the arguments of the proof of the upper bound obtained for $I.$ More precisely, recalling \eqref{1-p} and  \eqref{def:K+}, we get
 \begin{equation*}
E^{\overline \Omega_t} \left[  p^{K_-}\right] \le  E^{\overline \Omega_t} \left[  (1- \frac{\omega_0}{M_2 \ee^H})^{ \frac{ \ee^{-\xi'_t} t}{2 \omega_0 M_1}} \right] \le  E^{\overline \Omega_t} \left[ \ee^{- \frac{ \ee^{-\xi'_t} t}{2 Z}  } \right],
 \end{equation*}
where the first inequality is a consequence of Lemma \ref{lemmafailurebound} and $\xi'_t\defeq \xi_t- \log (1+o(t^{-\delta}))= \xi_t +o(t^{-\delta}),$ while the second inequality is a consequence of  $\log (1-x) \le -x$ for $0<x<1.$  Then, an integration by part yields
   \begin{equation*}
E^{\overline \Omega_t} \left[  p^{K_-}\right]    \le  \frac{1+ \epsilon_t}{P( H \ge h_t)} \int_{\ee^{h_t}}^{\infty}  \frac{ \ee^{-\xi'_t} t}{2 z^2}   \ee^{- \frac{ \ee^{-\xi'_t} t }{2 z}  }  P^{\ge0}(Z>z ) \d z.
 \end{equation*}
Making the change of variables given by $y=\ee^{-\xi'_t} t  /2 z$ and recalling \eqref{eq:tailZ} imply
   \begin{equation}
    \label{eq:up1}
P( H \ge h_t) \, E^{\overline \Omega_t} \left[  p^{K_-}\right]    \le  (1+ \epsilon_t) C_T t^{-\kappa}.
 \end{equation}
Now, assembling \eqref{eq:up3}, \eqref{eq:up2} and \eqref{eq:up1} concludes the proof of the upper bound.

\section{Proof of Theorem \ref{T:MAIN}}\label{sec:preuve_thm}

The results from Sections~\ref{sec:iid_valleys} and \ref{sec:interarrival} enable us to reduce the proof of Theorem \ref{T:MAIN} to an equivalent i.i.d.~setting and thus to apply a classic limit theorem. 

NB: we first prove the theorem under $\bPp$, and the statement under $\bP$ will follow.

\subsection{Reduction to i.i.d.~random variables}
For all $i\geq 0$, let $Z_i\defeq\tau(e_i,e_{i+1})$, so that $(Z_i)_{i\geq 0}$ is a stationary sequence under $\bPp$ (cf.~Lemma \ref{lem:e_negatifs}) and
\begin{equation*}
\tau(e_n)=Z_0+\cdots+Z_{n-1}.
\end{equation*}
Let us also enlarge the probability space $(\Omega\times\Z^\N,\mathcal{B},\bPp)$ in order to introduce an i.i.d.~sequence $(\omega^{(i)},(X^{(i)}_n)_{n\geq 0})_{i\geq 0}$ of environments and random walks distributed according to $\bPp$. Since the excursions of $V$ are independent, it is possible to couple $\omega$ and $(\omega^{(i)})_{i\geq 0}$ in such a way that, for all $i\geq0$, $H^{(i)}=H_i$, or more generally that the first excursion of $\omega^{(i)}$ and the $(i+1)$-th excursion of $\omega$ are the same. It suffices indeed to build $\omega^{(i)}$ from the excursion $(\omega_{e_i+x})_{1\leq x\leq e_{i+1}-e_i}$ of $\omega$
% (i.e.~$(V(e_i+l)-V(e_i))_{0\leq l\leq e_{i+1}-e_i}$ in terms of potential)
 and from independent environments on both sides of it. 

% Since the sequence of excursions $(V(e_i+l)-V(e_i))_{0\leq l\leq e_{i+1}-e_i}$, $i\geq 0$, is i.i.d., we may introduce a family $(\omega^{(i)})_{i\geq 0}$ of i.i.d.~environments following the law $\bPp$ and such that, conditionally to $\omega$, for every $i\geq0$, $H_0(\omega^{(i)})=H_i(\omega)$. It suffices indeed, from an i.i.d.~sequence $(\widehat{\omega}^{(i)})_{i\geq 0}$ of environments following the law $\bPp$ and independent of $\omega$, to build $\omega^{(i)}$ by inserting the $i$-th excursion of $V_\omega$ into $\widehat{\omega}^{(i)}$: for every $k\in\Z$,
% \begin{equation}
% \omega^{(i)}_k 
% 	= \left\{\begin{array}{ll}
% 		\widehat{\omega}^{(i)}_k& \mbox{if }k\leq 0,\\
% 		\omega_{e_i+k} & \mbox{if }0< k\leq e_{i+1}-e_i,\\
% 		\widehat{\omega}^{(i)}_{(e_{i+1}-e_i)+k} & \mbox{if }k>e_{i+1}-e_i . 
% 	\end{array}\right.
% \end{equation}

For all integers $i\geq 0$, we may now introduce
\begin{equation*}
\widehat{Z}_i\defeq\tau^{(i)}(e_1^{(i)})
\end{equation*}
which is defined like $Z_1(=\tau(e_1))$ but relatively to $(\omega^{(i)},X^{(i)})$ instead of $(\omega,X)$. By construction, $(\widehat{Z}_i)_{i\geq 0}$ is a sequence of i.i.d.~random variables distributed like $Z_1$ under $\bPp$. 

\paragraph*{For $1<\kappa<2$} We have the decomposition (where indices $i$ range from $0$ to $n-1$)
\begin{align}
\tau(e_n)-\bEp[\tau(e_n)]
	& =  \bigg(\sum_{H_i< h_n}Z_i - \bEp\bigg[\sum_{H_i< h_n}Z_i\bigg]\bigg) \notag\\
	&  + \bigg(\sum_{H_i\geq h_n}Z_i\bigg)\ind_{\overlap} + \bigg(\sum_{H_i\geq h_n}\taut_i\bigg)\ind_{\nonoverlap}\label{eqn:decomposition_tau_e_n}\\
	&  + \bigg(\sum_{H_i\geq h_n}Z_i^*\bigg)\ind_{\nonoverlap} -\bEp\bigg[\sum_{H_i\geq h_n}Z_i\bigg], \notag
\end{align}
where, if $H_i\geq h_n$ and $j$ is such that $\sigma(j)=i$ (i.e.~$e_i=b_j$), $\taut_i=\tautx{a_j}(e_i,e_{i+1})$ is the time spent on the left of $a_j$ after the first visit of $e_i$ and before reaching $e_{i+1}$, and $Z_i^*=Z_i-\taut_i$. 

Due to Propositions \ref{prop:tau_ia}, \ref{prop:overlap} and \ref{prop:lim_taut} respectively, the first three terms are negligible in $\bPp$-probability with respect to $n^{1/\kappa}$, hence 
\begin{equation*}
\frac{\tau(e_n)-\bEp[\tau(e_n)]}{n^{1/\kappa}}
	= \frac{1}{n^{1/\kappa}}\bigg(\bigg(\sum_{H_i\geq h_n}Z_i^*\bigg)\ind_{\nonoverlap} -\bEp\bigg[\sum_{H_i\geq h_n}Z_i\bigg]\bigg)+o(1), 
\end{equation*}
where $o(1)$ is a random variable converging to 0 in $\bPp$-probability. 

\paragraph*{For $\kappa=1$} Let
\begin{equation*}
a_n\defeq\inf\big\{t>0:\;\bPp(\tau(e_1)>t)\leq t^{-1}\big\}. 
\end{equation*}
(Note that $a_n\sim_n C_T n$ by Proposition \ref{prop:occupationdeep}). With the same definitions as above, we decompose
\begin{align*}
\tau(e_n)-n\bEp[\tau(e_1)\indic{\tau(e_1)<a_n}]
	& =  \bigg(\sum_{H_i< h_n}Z_i - \bEp\bigg[\sum_{H_i< h_n}Z_i\bigg]\bigg) \notag\\
	& \hspace{-2cm} + \bigg(\sum_{H_i\geq h_n}Z_i\bigg)\ind_{\overlap} + \bigg(\sum_{H_i\geq h_n}\taut_i\bigg)\ind_{\nonoverlap}\\%\label{eqn:decomposition_tau_e_n1}\\
	& \hspace{-2cm} + \bigg(\sum_{H_i\geq h_n}Z_i^*\bigg)\ind_{\nonoverlap} -n\bEp[Z_1(\indic{Z_1<a_n,H\geq h_n}-\indic{Z_1\geq a_n,H<h_n})]. \notag
\end{align*}
Note that the last term accounts for the difference between the restriction according to the value of $\tau(e_1)$, used on the left-hand side and that we need for applying the limit theorem, and the restriction according to the height, used in the right-hand side decomposition and throughout the paper. 

Again, the first three terms are negligible with respect to $n$, hence $n^{-1}(\tau(e_n)-n\bEp[\tau(e_1)\indic{\tau(e_1)<a_n}])$ equals
\begin{align}
	\frac{1}{n}\bigg(\bigg(\sum_{H_i\geq h_n}Z_i^*\bigg)\ind_{\nonoverlap} -n\bEp[Z_1(\indic{Z_1<a_n,H\geq h_n}-\indic{Z_1\geq a_n,H<h_n})]\bigg)+o(1). \label{eqn:46468}
\end{align}

Let us resume to the general case $1\leq \kappa<2$. Observe that $Z_{\sigma(j)}^*$ is the time to go from $b_j$ to $d_j$ for a random walk reflected at $a_j$, hence it depends only on the environment between $a_j+1$ and $d_j$. On the other hand, under $P(\cdot|K_n=m,\nonoverlap)$, the pieces $(\omega_{b_j+x})_{a_j< b_j+x\leq d_j}$ of the environment, for $j=1,\ldots,m$, are i.i.d.~with same distribution as $(\omega_x)_{e_{-D_n}<x\leq e_1}$ under \[\Pp(\cdot|H\geq h_n, H_{-k}<h_n \text{ for } k=1,\ldots,D_n).\] Remember indeed that $a_1>0$ on $\nonoverlap$; and due to our definition of deep valleys, conditioning by the value of $K_n$ only affects the number of deep valleys and not their individual distributions, while conditioning by $\nonoverlap$ implies the independence and imposes the excursions between $a_j$ and $b_j$ to be small, for $j=1,\ldots,K_n$. 

As a consequence, the term $\left(\sum_{H_i\geq h_n}Z_i^*\right)\ind_{\nonoverlap}$ has same distribution under $\bPp$ as $\left(\sum_{H_i\geq h_n}\widehat{Z}_i^*\right)\ind_{\nonoverlap}$ under $\bP(\cdot|\nonoverlapc)$, where $\widehat{Z}_i^*\indic{H_i\geq h_n}$ is defined like $Z^*_1\indic{H\geq h_n}$ but relative to $(\omega^{(i)},X^{(i)})$, and 
\begin{equation*}
\nonoverlapc\defeq\left\{\text{for }j=1,\ldots,K_n,\ H^{(\sigma(j))}_{-1}< h_n,\ldots,H^{(\sigma(j))}_{-D_n}< h_n\right\}
\end{equation*}
is the event that $D_n$ small excursions precede the high excursions in the i.i.d.~framework. 

We deduce that, for $1<\kappa<2$, the characteristic function satisfies 
\begin{align} \label{eqn:reduction1}
	& \hseq \bEp\Big[\ee^{i\lambda n^{-1/\kappa}(\tau(e_n)-\bEp[\tau(e_n)])}\Big]\notag\\
	& = \bEp\Bigg[\exp\bigg(i\lambda n^{-1/\kappa}(\bigg(\sum_{H_i\geq h_n}\widehat{Z}_i^*\bigg)\ind_{\nonoverlap}-\bEp\bigg[\sum_{H_i \geq h_n} Z_i\bigg])\bigg)\Bigg|\nonoverlapc\Bigg]+o_n(1)\notag\\
	& = \bEp\Bigg[\exp\bigg(i\lambda n^{-1/\kappa}(\left(\sum_{H_i \geq h_n}\widehat{Z}_i^*\right)\ind_{\nonoverlap}-\bEp\bigg[\sum_{H_i\geq h_n} Z_i\bigg])\bigg)\Bigg]+o_n(1)'. 
\end{align}
The last equality comes from $P(\nonoverlapc)\to_n 1$, cf.~Lemma \ref{lem:z_hat} below, and from the fact that the term in the expectation is bounded by 1. We have of course similar equalities for $\kappa=1$ from \eqref{eqn:46468}. 

The following lemma will enable us to put the neglected terms back in the sum, now with $\widehat{Z}_i$ instead of $Z_i$, and thus complete the reduction to i.i.d.~random variables. For $i\geq 0$, let $\widehat{\widetilde{\tau}}_i$ be the time spent by $X^{(i)}$ on the left of $e_{-D_n}$ ($=a_1$ if $H>h_n$) before $e_1$ is reached, hence $\widehat{Z}_i=\widehat{Z}^*_i+\widehat{\widetilde{\tau}}_i$. 

\begin{lemma}\label{lem:z_hat}
We have %(utiliser $\bPp$ plut\^ot ? cf.\ d\'efinition du couplage)
\begin{equation*}
P(\nonoverlapc)\limites{}{n}1,\label{eqn:overlap_hat}
\end{equation*}
\begin{equation*}
\frac{1}{n^{1/\kappa}}\sum_{i=0}^{n-1} \widehat{\widetilde{\tau}}_i \indic{H^{(i)}\geq h_n} \limites{(p)}{n} 0,\label{eqn:lim_taut_hat}
\end{equation*}
\begin{equation}
\frac{1}{n^{1/\kappa}}\left(\sum_{i=0}^{n-1} \widehat{Z}_i \indic{H^{(i)}<h_n}-E\bigg[\sum_{i=0}^{n-1} \widehat{Z}_i \indic{H^{(i)}<h_n}\bigg]\right) \limites{(p)}{n}0.\label{eqn:tau_ia_hat}
\end{equation}
\end{lemma}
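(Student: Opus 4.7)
The three statements are the i.i.d.\ counterparts of Propositions \ref{prop:overlap}, \ref{prop:lim_taut} and \ref{prop:tau_ia} respectively, and the plan is to recycle those proofs, exploiting the genuine independence built into the sequence $(\omega^{(i)}, X^{(i)})_{i\geq 0}$ to simplify matters.

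For the first statement, I condition on the $\sigma$-field $\mathcal{F}_n$ generated by $(H_i)_{0\leq i<n}$, which determines both $K_n$ and $\sigma(1),\ldots,\sigma(K_n)$. Since the $\omega^{(i)}$ are mutually independent and independent of $\mathcal{F}_n$, and since under $\Pp$ the heights $(H^{(i)}_{-k})_{k\geq 1}$ are i.i.d.\ copies of $H$ by Lemma \ref{lem:e_negatifs}, one obtains $\Pp(\nonoverlapc\mid\mathcal{F}_n)=(1-q_n)^{D_n K_n}$. The rest of the argument is then identical to that of Proposition \ref{prop:overlap}: $K_n\leq (1+\eps)nq_n$ with high probability by the law of large numbers, and $D_n n q_n^2=O((\log n)^{2\kappa+1}/n)\to 0$.

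For the second statement, the summands $\widehat{\widetilde{\tau}}_i\indic{H^{(i)}\geq h_n}$ are i.i.d.\ under $\bPp$, each distributed as $\tautx{e_{-D_n}}(0,e_1)\indic{H\geq h_n}$ in a single environment. By Markov's inequality applied to the sum, it suffices to bound their common expectation by $o(n^{1/\kappa-1})$, and this is precisely what was shown in the proof of Proposition \ref{prop:lim_taut} (see equation \eqref{eqn:1564} and the lines immediately following, which yield a bound of order $n^{-1}(\log n)^\kappa$).

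For the third statement, Chebychev's inequality reduces matters to showing $n\,\bVarp(\widehat{Z}_1\indic{H^{(1)}<h_n})=o(n^{2/\kappa})$, hence to $\bEp[\widehat{Z}_1^2\indic{H^{(1)}<h_n}]=o(n^{2/\kappa-1})$. For $1<\kappa<2$, Lemma \ref{lem:tau_e_1} directly gives the bound $C\ee^{(2-\kappa)h_n}=Cn^{2/\kappa-1}(\log n)^{-(2-\kappa)}$, which suffices. The main obstacle is the critical case $\kappa=1$, where $\widehat{Z}_1^2$ is not integrable under $\bPp$ because of possible backtracking into the high excursions on the left of $0$ in $\omega^{(1)}$. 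I handle it by the substitution trick of Subsection \ref{subsec:reduc_small}: in each $\omega^{(i)}$ independently, replace the high excursions lying on the left of the position being crossed by fresh independent small ones, producing modified variables $\widehat{Z}'_i$. Applying Lemma \ref{lem:remontee_difficile} in each $\omega^{(i)}$ (and using the i.i.d.\ structure together with a first-moment bound) shows that $n^{-1}\sum_i(\widehat{Z}_i-\widehat{Z}'_i)\indic{H^{(i)}<h_n}\to 0$ in $\bPp$-probability, so it is enough to control $n^{-1}(\sum_i\widehat{Z}'_i\indic{H^{(i)}<h_n}-\text{centring})$. The latter is handled by Chebychev together with the $(\Ep)'$-version of Lemma \ref{lem:tau_e_1}, which yields a variance of at most $Cn\ee^{h_n}=Cn^2/\log n=o(n^2)=o(n^{2/\kappa})$, concluding the proof.
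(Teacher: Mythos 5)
Your proposal is correct and takes essentially the same route as the paper, which likewise obtains the first two limits by rerunning the proofs of Propositions \ref{prop:overlap} and \ref{prop:lim_taut} in the i.i.d.\ setting and gets the third from Chebychev's inequality together with Lemma \ref{lem:tau_e_1}, invoking the modified environment of Subsection \ref{subsec:reduc_small} for $\kappa=1$. Your extra details (the conditional computation $(1-q_n)^{D_nK_n}$, the explicit $o(n^{1/\kappa-1})$ and variance bounds) simply flesh out what the paper leaves as a pointer to those earlier proofs.
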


\begin{proof} 
% Since $K_n$ is a binomial random variable of parameters $n$ and $q_n=\bP(H\leq h_n)\sim_n C_I \ee^{-\kappa h_n}$, we have
% \begin{align}
% \bP(\widehat{NO}(n))
% 	& =  \bE[(1-q_n)^{D_n K_n}] = (1-q_n+q_n(1-q_n)^{D_n})^n\notag\\
% 	& \geq  1-n q_n(1-(1-q_n)^{D_n}) \geq 1-nq_n^2 D_n
% \end{align}
% tends to 1 as $n\to\infty$ (the inequalities result from the fact that $1-(1-x)^\alpha\leq \alpha x$ for $0<x<1$ and $\alpha>0$). This proves \eqref{eqn:overlap_hat}. 
These results follow respectively from the proofs of Propositions \ref{prop:overlap}, \ref{prop:lim_taut} and \ref{prop:tau_ia}, made easier by the independence of the random variables $\widehat{Z}_0,\ldots,\widehat{Z}_{n-1}$. More precisely, the proofs of Propositions \ref{prop:overlap} and \ref{prop:lim_taut} hold in this i.i.d.~context almost without a change. And since the random variables $\widehat{Z}_i \indic{H^{(i)}<h_n}$, $i\geq 0$, are independent, the proof of \eqref{eqn:tau_ia_hat} for $1<\kappa<2$ would follow from
\begin{equation*}
n\bVarp(\tau(e_1)\indic{H<h_n})=o(n^{2/\kappa}), 
\end{equation*}
and thus from $n\bEp[\tau(e_1)^2\indic{H<h_n}]=o(n^{2/\kappa})$, which is given by Lemma~\ref{lem:tau_e_1}. For $\kappa=1$, the same modification of the environment as in Subsection \ref{subsec:reduc_small} adapts immediately. 
\end{proof}

From this lemma and \eqref{eqn:reduction1}, recomposing \eqref{eqn:decomposition_tau_e_n} with variables $\widehat{Z}_i$ (and using $\nonoverlap$ again, not $\nonoverlapc$), we finally have, for $1<\kappa<2$, 
\begin{equation} \label{eqn:reduc_iid}
\bEp\left[\ee^{i\lambda n^{-1/\kappa}(\tau(e_n)-\bEp[\tau(e_n)])}\right]=\bE\left[\ee^{i\lambda n^{-1/\kappa}(\widehat{Z}_0+\cdots+\widehat{Z}_{n-1}-\bE[\widehat{Z}_0+\cdots+\widehat{Z}_{n-1}])}\right]+o_n(1).
\end{equation}
Note that we used the equality $\bEp[\sum_{H_i>h_n} Z_i]=\bE[\sum_{H^{(i)}>h_n}\widehat{Z}_i]$, which results from the equality in distribution of $Z_i\indic{H_i\geq h_n}$ and $\widehat{Z}_i\indic{H^{(i)}\geq h_n}$ under $\bPp$. 

As a conclusion, this shows that, for $1<\kappa<2$, $\frac{\tau(e_n)-\bEp[\tau(e_n)]}{n^{1/\kappa}}$ has same limit in law under $\bPp$ (if any) as $\frac{\widehat{Z}_0+\cdots+\widehat{Z}_{n-1}-n\bEp[\widehat{Z}_0]}{n^{1/\kappa}}$, where the random variables $\widehat{Z}_i$, $i\geq 0$, are i.i.d.~with same distribution as $\tau(e_1)$ under $\bPp$. 

For $\kappa=1$, the same procedure shows that $\frac{\tau(e_n)-n\bEp[\tau(e_1)\indic{\tau(e_1)<a_n}]}{n}$ has same limit in law under $\bPp$, if any, as $\frac{\widehat{Z}_0+\cdots+\widehat{Z}_{n-1}-n\bEp[\widehat{Z}_0\indic{\widehat{Z}_0<a_n}]}{n}$.

\subsection{Conclusion of the proof}

Let us quote (a particular case of) Theorem 2.7.7 from \cite{durrett}:

\begin{theorem}\label{thm:durrett}
Suppose $X_1, X_2,\ldots$ are i.i.d.~nonnegative random variables with a distribution that satisfies
\begin{equation*}
\bP(X_1>x)=x^{-\alpha}L(x)
\end{equation*}
where $1\leq\alpha<2$ and $L$ is slowly varying. Let $S_n=X_1+\cdots+X_n$, 
\begin{equation*}
a_n=\inf\big\{x:\bP(X_1>x)\leq n^{-1}\big\}\quad\mbox{ and }\quad b_n=n\bE[X_1\indic{X_1<a_n}].
\end{equation*}
Then, if $1<\alpha<2$, 
\begin{equation*}
\frac{S_n-nE[X_1]}{a_n}\limites{\rm (law)}{n} (-\Gamma(1-\alpha))^{1/\alpha}\mathcal{S}^{ca}_\alpha, 
\end{equation*}
where $\mathcal{S}^{ca}_\alpha$ is a centered completely asymmetric stable random variable of index $\alpha$, defined in \eqref{eq:defStable}. 

And if $\alpha=1$, 
\begin{equation*}
\frac{S_n-b_n}{a_n}\limites{\rm (law)}{n} c+\mathcal{S}^{ca}_1, 
\end{equation*}
where $c=1-\gamma$ ($\gamma\simeq 0.577$ being Euler's constant), and $\mathcal{S}^{ca}_1$ was defined in \eqref{eq:defStable1}. 
\end{theorem}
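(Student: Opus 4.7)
The plan is via characteristic functions and L\'evy's continuity theorem. Writing $\phi_n(t)\defeq\bE[\ee^{itX_1/a_n}]$ and $\bar F(x)\defeq\bP(X_1>x)=x^{-\alpha}L(x)$, the statement reduces to showing that $\log\bE[\ee^{it(S_n-c_n)/a_n}]=n\log\phi_n(t)-itc_n/a_n$ converges pointwise, where $c_n=nE[X_1]$ if $1<\alpha<2$ and $c_n=b_n$ if $\alpha=1$. Since $\phi_n(t)\to1$, one may replace $n\log\phi_n(t)$ by $n(\phi_n(t)-1)$ at the cost of terms of order $n|\phi_n(t)-1|^2=o(1)$. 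The engine of the computation is the regular variation input $n\bar F(a_n u)\to u^{-\alpha}$ for every $u>0$ (from the definition of $a_n$), with Potter's theorem supplying uniform polynomial bounds $n\bar F(a_n u)\leq C(u^{-\alpha-\eps}\vee u^{-\alpha+\eps})$ needed for dominated convergence.

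For $1<\alpha<2$, the mean $E[X_1]$ is finite, and the change of variable $u=x/a_n$ followed by integration by parts (with $g(u)\defeq\ee^{itu}-1-itu$, which is smooth, vanishes at $0$, and is bounded at $\infty$) gives
\begin{equation*}
n(\phi_n(t)-1)-it\,nE[X_1]/a_n
	=\int_0^\infty it(\ee^{itu}-1)\,n\bar F(a_n u)\,du.
\end{equation*}
Passing to the limit under dominated convergence and using the classical identity $\int_0^\infty \ee^{iv}v^{\beta-1}\,dv=\Gamma(\beta)\ee^{i\beta\pi/2}$ for $\beta\in(0,1)$, together with an additional integration by parts in $v$, yields
$\int_0^\infty it(\ee^{itu}-1)u^{-\alpha}\,du=-\Gamma(1-\alpha)(-it)^\alpha$ (principal branch, for both signs of $t$). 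This is exactly the log-characteristic function of $(-\Gamma(1-\alpha))^{1/\alpha}\mathcal{S}_\alpha^{ca}$.

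For $\alpha=1$, the same scheme applies but with $g(u)\defeq \ee^{itu}-1-itu\indic{u<1}$, which has a jump of size $it$ at $u=1$. Integration by parts then yields
\begin{equation*}
n(\phi_n(t)-1)-itb_n/a_n=it\cdot n\bar F(a_n)+\int_0^\infty it(\ee^{itu}-\indic{u<1})\,n\bar F(a_n u)\,du.
\end{equation*}
The boundary term tends to $it\cdot 1=it$ by definition of $a_n$. Splitting the remaining integrand into real and imaginary parts, the real part gives $-\tfrac{\pi}{2}|t|$ by the Dirichlet integral $\int_0^\infty\sin(tu)/u\,du=\tfrac{\pi}{2}\operatorname{sgn}(t)$, and the imaginary part gives $-it\log|t|-it\gamma$ via the Frullani-type evaluation $\int_0^{|t|}(\cos v-1)/v\,dv+\int_{|t|}^\infty\cos v/v\,dv=-\gamma-\log|t|$ (a direct consequence of the cosine-integral identity $\operatorname{Ci}(x)=\gamma+\log x+\int_0^x(\cos v-1)/v\,dv$). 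Adding the boundary term recovers precisely $-\tfrac{\pi}{2}|t|-it\log|t|+it(1-\gamma)$, i.e.\ the log-characteristic function of $(1-\gamma)+\mathcal{S}_1^{ca}$.

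The main obstacle is the $\alpha=1$ case: one must track the boundary term of integration by parts, since it supplies the $+1$ in the constant $c=1-\gamma$ without which the whole statement would miss its normalization. A second delicate point there is justifying dominated convergence against the non-integrable limit $u^{-1}$: the integrand $it(\ee^{itu}-\indic{u<1})$ is of order $u$ near $0$ and bounded at $\infty$, so the integral is only conditionally convergent, and the Potter bounds must be combined with an oscillation argument (or truncation at $|u|\in[\delta,1/\delta]$ followed by separate control of the tails) to pass to the limit rigorously. The case $\alpha\in(1,2)$ is by contrast routine once the Potter bounds are in place.
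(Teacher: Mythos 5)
Your proof is correct, but it takes a genuinely different route from the paper, which in fact does not prove this statement at all: it quotes it as a particular case of Theorem 2.7.7 of Durrett's book (whose proof there is the truncation/compound--Poisson argument: split each $X_i$ at level $\eps a_n$, send the large summands to a Poisson functional, control the variance of the centered small part, then let $\eps\to0$) and only adds a remark reconciling Durrett's parametrization with the normalization $\exp((-it)^\alpha)$ by means of two integral identities. Your direct characteristic-function computation is self-contained and lands immediately on the paper's normalization, which is exactly what that remark has to work to recover: the evaluation $\int_0^\infty it(\ee^{itu}-1)u^{-\alpha}\d u=-\Gamma(1-\alpha)(-it)^\alpha$ is correct (one integration by parts reduces it to $\frac{(-it)^2}{\alpha-1}\Gamma(2-\alpha)(-it)^{\alpha-2}$ and $\Gamma(2-\alpha)=(1-\alpha)\Gamma(1-\alpha)$), the $\operatorname{Ci}$/Dirichlet computation for $\alpha=1$ is correct, and the boundary term $it\,n\bar F(a_n)\to it$ does supply the ``$1$'' in $c=1-\gamma$, consistently with \eqref{eq:defStable1}. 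The two delicate points you flag are real and are disposed of exactly as you indicate: $n|\phi_n(t)-1|^2\to0$ because $|\phi_n(t)-1|=O(\tilde L(a_n)/a_n)=o(n^{-1/2})$ for a slowly varying $\tilde L$ (this is where $\alpha<2$ enters), and the conditionally convergent tail $\int_M^\infty it\ee^{itu}\,n\bar F(a_nu)\d u$ is bounded by $2n\bar F(a_nM)=O(M^{-1+\eps})$ uniformly in $n$ after one integration by parts, using only the monotonicity of $\bar F$ and the Potter bound, so the limit interchange is legitimate. Two further micro-details worth writing out: the Potter bound controls $n\bar F(a_nu)$ only when $a_nu$ exceeds a fixed threshold $X_0$, so on $u\leq X_0/a_n$ one uses the trivial bound $n\bar F\leq n$ (contribution $O(n/a_n^2)$); and $n\bar F(a_n)\to1$ must be extracted from the definition of $a_n$ as a generalized inverse together with $\bar F(x^-)/\bar F(x)\to1$, which follows from regular variation.
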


\begin{remarks}\leavevmode
\begin{itemize}
	\item Durrett~\cite{durrett} actually gives a different parametrization of the limit law. The above parameters are obtained by comparing the real and imaginary parts of expressions (7.11) and (7.13) (where there is a sign error) of \cite{durrett}, using the following identities: $\int_0^\infty\frac{1-\cos x}{x^{\alpha+1}}\d x = \cos\left(\frac{\pi\alpha}{2}\right)\Gamma(1-\alpha)$ (for any $0<\alpha<2$), and $\int_0^1\frac{\sin u-u}{u^2}\d u+\int_1^\infty\frac{\sin u}{u}\d u=1-\gamma$. The value of $c$ is however unimportant in the following. 
%- For $1<\alpha<2$. Note that $\Gamma(1-\alpha)=\frac{\Gamma(2-\alpha)}{1-\alpha}<0$. The parameters $c,b$ of the limit in Theorem 2.7.7 (in \cite{durrett}) in the parametrization (7.13) (ibid.) are obtained by comparing the real parts of the exponents of expressions (7.11) and (7.13) (where there is a sign error), noting ({\it a}) that $c$ is the expectation of the limit and equals $\frac{\alpha}{\alpha-1}$ (from (7.11)), which coincides with $\lim_n a_n^{-1}n \bE[X_1\indic{X_1>a_n}]$ and thus vanishes due to our different statement of the limit, and ({\it b}) that $\int_0^\infty\frac{1-\cos x}{x^{\alpha+1}}\d x = \cos\left(\frac{\pi\alpha}{2}\right)\Gamma(1-\alpha)$ (for any $0<\alpha<2$)\\
%- For $\alpha=1$, we find similarly the values of the parameters; note that for $c$ we used the identity $\int_0^1\frac{\sin u-u}{u^2}\d u+\int_1^\infty\frac{\sin u}{u}\d u=1-\gamma$, which we don't prove since it is unimportant in the following.
	\item If $\bP(X_1>x)\sim_{x\to\infty}\frac{C}{x^\alpha}$, then we have $a_n\sim_n C^{1/\alpha}n^{1/\alpha}$. 
% Preuve : Let $\eps>0$. The bounds $(Cn)^{1/\alpha}(1-\eps)\leq a_n\leq (Cn)^{1/\alpha}(1+\eps)$ are equivalent to $F((Cn)^{1/\alpha}(1-\eps))>\frac{1}{n}$ and $F((Cn)^{1/\alpha}(1+\eps))<\frac{1}{n}$ where $F(x)=\bP(X>x)$. However, $F((Cn)^{1/\alpha}(1-\eps))\sim \frac{1}{n(1-\eps)^\alpha}>\frac{1}{n}$ and $F((Cn)^{1/\alpha}(1+\eps))\sim \frac{1}{n(1+\eps)^\alpha}<\frac{1}{n}$, hence the previous conditions hold as soon as $n$ is large enough. 
\end{itemize}
\end{remarks}

Thanks to Proposition \ref{prop:occupationdeep} and to the previous reduction \eqref{eqn:reduc_iid} to an i.i.d.~framework, Theorem~\ref{thm:durrett} gives that, for $1<\kappa<2$, 
\begin{equation*}
\mbox{under }\bPp,\ \frac{\tau(e_n)-\bEp[\tau(e_n)]}{n^{1/\kappa}}\limites{\rm (law)}{n}(-\Gamma(1-\alpha)C_T)^{1/\kappa} \mathcal{S}^{ca}_\kappa.
\end{equation*}
The random walk is almost-surely transient to $+\infty$ under both $\bP$ and $\bPp$ (cf.~after \eqref{eqn:zeitouni_p3}), hence the total time spent on $\Z_-$ is finite in both cases and thus trivially negligible with respect to $n^{1/\kappa}$. Since random walks under distributions $\bP$ and $\bPp$ can simply be coupled so that they coincide after erasure of the time spent on $\Z_-$, we conclude that the above limit (with same centering) holds under $\bP$ as well. %: ; and this time is bounded and thus trivially negligible with respect to $n^{1/\kappa}$, under both $\bP$ and $\bPp$, so that we may replace one by the other. 

We deduce, using the law of large numbers and the central limit theorem for $(e_n)_n$ (cf.~the conclusion of \cite{kks}), 
\begin{equation}\label{eqn:4854}
\mbox{under }\bP,\ \frac{\tau(n)-nv^{-1}}{n^{1/\kappa}}\limites{\rm (law)}{n}(-\Gamma(1-\alpha)E[e_1]^{-1}C_T)^{1/\kappa} \mathcal{S}^{ca}_\kappa,
\end{equation}
where $v^{-1}\defeq\frac{1}{E[e_1]}\bEp[\tau(e_1)]$. Since \eqref{eqn:4854} yields $\frac{\tau(n)}{n}\to_n v^{-1}$ in probability, comparison with Solomon \cite{solomon} gives the value $v^{-1}=\bE[\tau(1)]=\frac{1+E[\rho_0]}{1-E[\rho_0]}$. By~\eqref{eqn:c_t=}, 
\[(-\Gamma(1-\kappa)E[e_1]^{-1}C_T)^{1/\kappa}=\left(-\Gamma(1-\kappa)2^\kappa\Gamma(1+\kappa)E[e_1]^{-1}C_U\right)^{1/\kappa},\]
and Euler's reflection formula  $\Gamma(1+\kappa)\Gamma(1-\kappa)=\frac{\pi\kappa}{\sin\pi\kappa}$, together with the expression of $C_U$ recalled in \eqref{eqn:C_U} leads to the value of Equation \eqref{eqn:thm_tau}. 

%The value of the inverse speed $v^{-1}=\bE[\tau(1)]$ classically follows from :
%\begin{equation}
%v^{-1}=\bE[\tau(1)]=1+2 \sum_{k<0} E[\ee^{-V(k)}]=1+2\frac{E[\ee^{V(1)}]}{1-E[\ee^{V(1)}]}=\frac{1+E[\rho_0]}{1-E[\rho]}. 
%\end{equation}
Finally, the limit law for $X_n$ results using transience to $+\infty$, cf.~\cite{kks}, pp.167--168. 

For $\kappa=1$, we get
\begin{equation*}
\mbox{under }\bPp,\ \frac{\tau(e_n)-n\bEp[\tau(e_1)\indic{\tau(e_1)<a_n}]}{n^{1/\kappa}}\limites{\rm (law)}{n} C_T(1-\gamma)+C_T\mathcal{S}^{ca}_1.
\end{equation*}
Furthermore, using Proposition \ref{prop:occupationdeep}, when $n\to\infty$, 
\begin{equation*}
\bEp[\tau(e_1)\indic{\tau(e_1)<a_n}]=\int_0^{a_n} \bPp(\tau(e_1)>t)\d t \sim C_T\log(a_n) \sim C_T\log n.
\end{equation*}
Like in the previous case, we may substitute $\bP$ for $\bPp$ (letting the centering term unchanged). Goldie \cite{goldie} proved that, when $\kappa=1$,  $C_K=\frac{1}{E[\rho_0\log\rho_0]}$, hence $C_T=\frac{2E[e_1]}{E[\rho_0\log\rho_0]}$. This concludes the proof of Theorem~\ref{T:MAIN} (cf.~\cite{kks} again for the inversion argument).

\section{Appendix}
\label{sec:appendix}

\subsection{Proof of Lemma \ref{lem:goodenv} }

Recalling the definition of $\Omega_t,$ the proof of Lemma \ref{lem:goodenv}  boils down to showing that for $i=1,2,3,$
\begin{equation}
\label{eq:envprincip}
P((\Omega_t^{(i)})^c \,; \, H \ge h_t)=o(t^{-\kappa}), \qquad t \to \infty.
\end{equation}

The case $i=1$ is trivial. Indeed, the fact that $e_1$ has some finite exponential moments (see after \eqref{eqn:def_e_i}) implies that 
$P((\Omega_t^{(1)})^c)=o(t^{-\kappa})$ when $t$ tends to infinity.

Furthermore, this result implies that the case $i=2$ is a consequence of  
\begin{equation*}
P((\Omega_t^{(2)})^c \,; \, \Omega_t^{(1)} \,; \, H \ge h_t)=o(t^{-\kappa}), \qquad t \to \infty.
\end{equation*}
Then, let us observe that $V^{\uparrow}(T_H,e_1)$ is less  than $V^{\uparrow}(T_{h_t},e_1)$ which is bounded by $V^{\uparrow}(T_{h_t},T_{h_t}+\lceil C \log t \rceil)$
 on $\Omega_t^{(1)}.$ Applying the strong Markov property at time $T_{h_t},$ we get that $P( V^{\uparrow}(T_H,e_1) \ge \alpha \log t  \,; \, \Omega_t^{(1)} \,; \, H \ge h_t)$ is bounded by 
 \begin{align*}
 \nonumber
P(H \ge h_t) \, P(V^{\uparrow}(0,\lceil C \log t \rceil) \ge \alpha \log t)
	&\le P(H \ge h_t) \, P\big(\max_{1\le k \le \lceil C \log t \rceil} H_k \ge \alpha \log t\big)\\
	&\le C (\log t) \, P(H \ge h_t) \, P( H \ge \alpha \log t).
\end{align*}
Recalling that $h_t=\log t - \log \log t,$ that $\alpha>0$ together with Iglehart's result yields
\begin{equation*}
P( V^{\uparrow}(T_H,e_1) \ge \alpha \log t  \,; \, \Omega_t^{(1)} \,; \, H \ge h_t)=o(t^{-\kappa}), \qquad t \to \infty.
\end{equation*}
Then to prove \eqref{eq:envprincip} for $i=2,$ it remains to show that 
\begin{equation}
\label{eq:fluctufin0}
P( V^{\downarrow}(0,T_H) \le - \alpha \log t \,; \, H \ge h_t)=o(t^{-\kappa}), \qquad t \to \infty.
\end{equation}
Observing that 
$V^{\downarrow}(0,T_H)=\min\{V^{\downarrow}(0,T_{h_t}) \, ; \, V^{\downarrow}(T_{h_t},T_H)\},$ we will treat each term separately.
From the trivial inclusion 
\begin{equation*}
\left\{
V^{\downarrow}(0,T_{h_t}) \le -\alpha \log t \, ; \, H \ge
h_t \right\} \!\subset\! \left\{ T^{\downarrow}(\alpha \log
t)\!<T_{h_t}\!< T_{(-\infty,0]} \right\},
\end{equation*}
 it follows that 
$P( V^{\downarrow}(0,T_{h_t}) \le - \alpha \log t   \,; \, H \ge h_t)$ is less than
\begin{equation*}
 \sum_{p=\lfloor
\alpha \log t \rfloor}^{ \lfloor h_t \rfloor} P(M_\alpha \in
[p,p+1) \, ; \, T^{\downarrow}(\alpha \log t)<T_{h_t}<
T_{(-\infty,0]}),
\end{equation*}
 where $M_\alpha\defeq \max\{V(k); \, 0 \le k \le
T^{\downarrow}(\alpha \log t)\}.$ Applying the strong Markov
property at time $T^{\downarrow}(\alpha \log t),$
 we bound the term of the previous sum
by $P( S \ge p ) \, P (S
\ge h_t-(p+1- \alpha \log t)).$ Then recalling that there exists $C$ such that $P( S \ge p ) \le C \ee^{-\kappa p}$ for all $p\ge 0$ (see \eqref{eqn:feller}),
we obtain the uniform bound $C \ee^{-\kappa
(h_t+\alpha \log t)}$ for  the summand, which yields
\begin{equation}
\label{eq:fluctufin1}
P( V^{\downarrow}(0,T_{h_t}) \le - \alpha \log t \,; \, H \ge h_t) \le C  h_t   \ee^{-\kappa
(h_t+\alpha \log t)}=o(t^{-\kappa}), \qquad t \to \infty,
\end{equation}
since $h_t= \log t - \log \log t$ and $\alpha>0.$
Furthermore, applying again the strong Markov property at
$T_{h_t},$ we obtain
\begin{equation*}
P( V^{\downarrow}(T_{h_t},T_H) \le - \alpha \log t\,; \, H \ge h_t) \le P( H \ge h_t )
 P(V^{\downarrow}(0,T_S) \le - \alpha \log t).
\end{equation*}
Then, applying the strong Markov
property at $T^{\downarrow}(\alpha \log t),$ we get that $P(V^{\downarrow}(0,T_S) \le - \alpha \log t)$ is less than $P(S > \alpha \log t),$ which yields
\begin{equation}
\label{eq:fluctufin2}
P( V^{\downarrow}(T_{h_t};T_H) \le - \alpha \log t \,; \, H \ge h_t) \le C  \ee^{-\kappa
(h_t+\alpha \log t)}  =o(t^{-\kappa}), \qquad t \to \infty.
\end{equation}
Now assembling \eqref{eq:fluctufin1} and \eqref{eq:fluctufin2} implies \eqref{eq:fluctufin0} and concludes the proof of the case $i=2.$

Let us consider the last case $i=3$. Since $R^-$ depends only on $\{V(x), \, x \le 0\}$, and $P(H>h_t)\sim C_I t^{-\kappa}(\log t)^{\kappa}$ when $t\to\infty$, it suffices to prove $\Pp(R^->(\log t)^4 t^\alpha)=o((\log t)^{-\kappa})$. This would follow (for any $\alpha>0$) from Markov property if $\Ep[R^-]<\infty$. We have (changing indices and incorporating the single terms into the sums): 
\begin{align}
R^-	&= \sum_{i\leq 0}\left(1+2\sum_{i\leq j\leq 0}\ee^{V(j)-V(i)}\right)\left(\ee^{-V(i)}+2\sum_{k\leq i-1}\ee^{-V(k)}\right)\notag\\
	&\leq 4\sum_{k\leq i\leq j\leq 0}\ee^{V(j)-V(i)-V(k)},\label{eqn:r^-} 
\end{align}
and this latter quantity was already seen to be integrable under $\Pp$, after \eqref{eqn:j_neg}, when $1<\kappa<2$. In order to deal with the case $0<\kappa\leq 1$, let us introduce the event
\begin{equation*}
A_t=\bigcap_{k=1}^\infty\{H_{-k}<\frac{1}{\kappa}\log k^2+\log t+\log\log t\}. 
\end{equation*}
On one hand, by \eqref{iglehartthm}, $P((A_t)^c)\leq \sum_{k=1}^\infty \frac{C}{k^2 (t\log t)^\kappa}=\left(\sum_{k=1}^\infty\frac{C}{k^2}\right)\frac{t^{-\kappa}}{(\log t)^\kappa}=o(t^{-\kappa})$. On the other hand, proceding like after \eqref{eqn:j_neg}, 
\begin{align*}
\Ep[R^-\ind_{A_t}]
	&\leq 4\sum_{u\leq 0}\Ep[\ee^{-V(e_u)}]\Ep[(M'_1)^2 M_2 \ee^H \indic{H<\frac{1}{\kappa}\log u^2+\log t+\log\log t}] 
\end{align*}
and $\Ep[\ee^{-V(e_u)}]=E[\ee^{V(e_1)}]^u$ hence, using Lemma \ref{lem:renewalestimates}, when $0<\kappa<1$, 
\begin{equation*}
\Ep[R^-\ind_{A_t}]
	\leq 4 \left(\sum_{u\leq 0}E[\ee^{V(e_1)}]^u \frac{1}{u^{2(1-\kappa)/\kappa}}\right) (t\log t)^{1-\kappa} = C(t\log t)^{1-\kappa},
\end{equation*}
and when $\kappa=1$, 
\begin{equation*}
\Ep[R^-\ind_{A_t}]
	\leq 4\sum_{u\leq 0}E[\ee^{V(e_1)}]^u(\frac{1}{\kappa}\log u^2+\log t+\log \log t)\leq C\log t.
\end{equation*}
Finally, by Markov inequality,
\begin{equation*}
\Pp(R^->t^\alpha(\log t)^4)\leq \Pp((A_t)^c)+\frac{1}{t^\alpha (\log t)^4}\Ep[R^-\ind_{A_t}]
\end{equation*}
is negligible with respect to $(\log t)^{-\kappa}$ for any $\alpha\geq 1-\kappa$ when $0<\kappa<1$, and for any $\alpha>0$ when $\kappa=1$.

\subsection{Proof of Lemma \ref{lemmafailurebound} }

The proof of \eqref{eq:boundM2} is a direct consequence of the definitions of $M_2$ and $\Omega_t.$ 
Then, we shall first prove \eqref{eq:boundVarF}. Since $Var_\omega(F) \le E_{\omega} \left[ F^2 \right],$ 
we shall bound $ E_{\omega} \left[ F^2 \right].$
Recalling \eqref{expF^2} implies
\begin{equation*}
\label{R+majo1}
 R^+ \le C (\log t)^3
\ee^{-\widehat{V}^{\downarrow}(0,e_1)} 
 \max_{0 \le j \le e_1} \ee^{-\widehat{V}(j)},
\end{equation*}
on $\Omega_t.$ To bound $\widehat{V}^{\downarrow}(0,e_1)$ by below, observe first that \eqref{Vcheckeq2} yields
$\widehat{V}^{\downarrow}(0,T_H)\ge V^{\downarrow}(0,T_H)\ge - \alpha
\log t$ on $\Omega_t.$ Moreover, \eqref{def:h(x)} together with
\eqref{Vcheckeq0} imply that $\widehat{V}(y)-\widehat{V}(x)$ is
greater on $\Omega_t$ than
\begin{equation*}
 [V(y)-\max_{y \le j \le e_1-1} V(j)]-[V(x)-\max_{x \le j \le
e_1-1} V(j)]- \log \log t - O(1),
\end{equation*}
 for any $T_H \le x \le y \le e_1,$ which yields $\widehat V^{\downarrow}(T_H,e_1)
\ge -\alpha \log t - \log \log t - O(1)$ on $\Omega_t.$  
Furthermore, since \eqref{def:h(x)} and \eqref{Vcheckeq0} imply
that $\widehat{V}(T_H)$ is larger than $\max_{0 \le j \le
T_H}\widehat{V}(j)-\log \log t-O(1),$ assembling
$\widehat{V}^{\downarrow}(0,T_H)\ge - \alpha \log t$ with
$\widehat{V}^{\downarrow}(T_H,e_1)\ge- \alpha \log t- \log \log t - O(1)$ yields
\begin{equation}
\label{fluctu(b,d):mino1} \widehat{V}^{\downarrow}(0,e_1)\ge-
\alpha \log t- \log \log t - O(1).
\end{equation}
Then, coming back to \eqref{R+majo1}, we have to bound $ \max_{0 \le j \le e_1} \exp\{-\widehat{V}(j)\}.$ Recalling \eqref{Vcheckeq2}, we have $ \min_{0 \le j \le T_H} \widehat{V}(j) \ge  \min_{0 \le j \le T_H} {V}(j) \ge 0,$ by definition of the deep valleys.
Moreover,  it follows from \eqref{fluctu(b,d):mino1} that, for any $T_H \le j \le e_1,$
\begin{align*}
\nonumber
\min_{T_H \le j \le e_1} \widehat{V}(j)
	&= \min_{T_H \le j \le e_1} (\widehat{V}(j)-\widehat{V}(T_H))+\widehat{V}(T_H)\\
	&\ge \widehat{V}^{\downarrow}(T_H,e_1) + h_t \ge h_t - \alpha \log t - \log \log t - O(1),
\end{align*}
which is greater than $0$ for $t$ large enough. Therefore, recalling \eqref{fluctu(b,d):mino1} and \eqref{R+majo1}, we get $ R^+ \le C (\log t)^4 t^{\alpha} $ on $\Omega_t$. 
This result together with the fact that $R^- \le C (\log t)^4 t^{\alpha}$ on $\Omega_t$ concludes the proof of \eqref{eq:boundVarF}.

In a second step, we prove \eqref{eq:boundM1}. To this aim, observe first that $- S_1 \le \widehat M_1- M_1 \le S_2$, where $S_1 \defeq  \sum_{i=0}^{T_{H}-1} |\ee^{-V(i)} - \ee^{-\widehat{V}(i)}|$ and $ S_2   \defeq  \sum_{i=T_{H}}^{e_1-1}  \ee^{-\widehat{V}(i)}$. 
By definition of $\Omega_t$ and since $T_{H} \ge T_{h_t}$ we get $S_2 \le C (\log t) \, \ee^{-{h_t}-\widehat{V}^{\downarrow}(0,e_1)}$ which yields $S_2=o(t^{-\delta}),$ when $t \to \infty$ by recalling \eqref{fluctu(b,d):mino1}. To bound $S_1,$ the definition of $h(\cdot)$ (from the $h$-process) given in Subsection \ref {subsec:hproc}
 implies 
\begin{equation*}
S_1 \le \sum_{i=0}^{T_{H}-1}
  \ee^{-V(i)} (1-h(i))
\le  C (\log t) \ee^{-h_t} \sum_{i=0}^{T_{H}-1} \ee^{\max_{0\le j \le i }V(j)-V(i)},
  \end{equation*}
on $\Omega_t.$ Since $T_H \le e_1 \le C \log t$ on $\Omega_t,$ we obtain $S_1  \le C (\log t)^2 \ee^{-h_t-V^{\downarrow}(0,T_H)} .$
This concludes the proof of \eqref{eq:boundM1} by recalling that $V^{\downarrow}(0,T_H)$ is larger than $-\alpha \log t.$

\subsection{Proof of Lemma \ref{lemmafsuccessbound}  }

Recalling \eqref{expS2}, we get 
$
E_{\omega}[G] \le C (\log  t)^2 \ee^{ \bar V^{\uparrow}(0,e_1)}
$
on $\Omega_t.$
Therefore the proof of Lemma \ref{lemmafsuccessbound}  boils down to finding an upper bound for the largest rise $\bar V^{\uparrow}(0,e_1)$ of $\bar{V}$ inside the interval $[0,e_1]$.
Observe first that \eqref{Vbareq2} allows to bound the largest rise $\bar V^{\uparrow}(T_H,e_1)$ of $\bar{V}$ on the interval $[T_H, e_1]$ by the largest rise of $V$ on this interval, which is less than $\alpha \log t$ on $\Omega_t.$
Concerning the largest rise  of $\bar{V}$ on the interval $[0,T_H]$, we notice, taking into account the small size of the fluctuations of $V$ controlled by $\Omega_t$, that \eqref{def:g(x)} and \eqref{Vbareq0} imply that the difference $\bar{V}(y)-\bar{V}(x)$ is less or equal than
\begin{equation*}
[V(y)-\max_{0 \le j \le y} V(j)]-[V(x)-\max_{0 \le j \le
x} V(j)]+ \log \log t+  O(1),
\end{equation*}
which yields $\bar V^{\uparrow}(0,T_H)
\le \alpha \log t + \log \log t + O(1)$ on $\Omega_t.$ Furthermore, \eqref{Vbareq2} and the fact that $\max_{T_H \le y \le e_1} V(y)\le V(T_H)$ yields $ \max_{T_H \le y \le e_1} \bar V(y)\le \bar V(T_H)$ imply
\begin{equation*}
\bar V^{\uparrow}(0,e_1) \le \max \left\{ \bar V^{\uparrow}(0,T_H) ; \bar V^{\uparrow}(T_H,e_1)   \right\},
\end{equation*}
from which we conclude the proof of Lemma \ref{lemmafsuccessbound}.

\medskip
\medskip

\bigskip

\end{document}